\newtheorem{theorem}{Theorem}[section]
\newtheorem{lemma}[theorem]{Lemma}
\newtheorem{proposition}[theorem]{Proposition}
\newtheorem{remark}{Remark}[section]
\newtheorem{definition}{Definition}
\def\R{\mathbb{R}}
\def\C{\mathbb{C}}
\def\E{\mathbb{E}}
\def\P{\mathbb{P}}
\def\el{\mathcal{L}}
\def\wt{\widetilde}
\def\p{\mathfrak{P}}
\def\ol{\overline}
\def\olz{\overline{z}}
\def\olp{\overline{\P}}
\def\eps{\epsilon}
\def\del{\delta}
\def\min{\mathrm{min}}
\def\M{\mathcal{M}}
\def\S{\Sigma_{\mathrm{a}}}
\def\L{\Lambda}
\def\n{\mathcal{N}}
\def\I{\mathcal{I}}
\def\t{\theta}
\def\p{\mathcal{P}}
\def\d{\mathcal{D}}
\def\G{\Sigma}
\def\wn{\wt{\mathcal{N}}}
\def\w{\mathcal{W}}
\def\LL{\mathbb{L}}
\def\supp{\mbox{\rm supp}}
\def\h{\mathrm{Har}}
\def\mm{\mathcal{M}_1}
\def\c{\complement}
\def\a{\alpha}
\def\D{\mathcal{D}}
\def\b{\mathcal{B}}
\renewcommand{\l}[0]{\left }
\renewcommand{\r}[0]{\right}
\renewcommand{\i}{\infty}
\renewcommand*{\@cite@ofmt}{\hbox}
\begin{document}

\author{Subhroshekhar Ghosh\thanks{ORFE, Princeton University, 
Princeton, NJ 08544, USA. Email: subhroshekhar@gmail.com.}\and
Ofer Zeitouni\thanks{Faculty of Mathematics,
  Weizmann Institute of Science, POB 26, Rehovot 76100, Israel and 
  Courant Institute, 251 Mercer Street, New York, NY 10012, U.S.A.. Email: ofer.zeitouni@weizmann.ac.il.
Research partially supported by
a grant from the Israel Science Foundation.}}
\date{September 24, 2014. Revised March 13, 2015.}
 \title{Large deviations for zeros of random polynomials with i.i.d.  exponential coefficients} 
 \maketitle
\begin{center}
  Dedicated to the memory of Wenbo Li
   \end{center}
 \begin{abstract}We derive a large deviation principle for the empirical measure of zeros of the random polynomial $P_n(z)=\sum_{j=0}^n \xi_j z^j$, where the coefficients $\{\xi_j\}_{j\geq 0}$ form an i.i.d. sequence of exponential 
random variables.
 \end{abstract}
\section{Introduction}
\label{intro}
The study of the zero set $\{z_1,\ldots,z_n\}$ 
of random polynomials 
\begin{equation}
        \label{eq-poly}
        P_n(z)=\sum_{j=0}^n \xi_j z^j
\end{equation}
with i.i.d. coefficients $\{\xi_j\}_{j\geq 0}$ has a long and rich 
history, which we will not review here; see \cite{BA} for a classical account and \cite{TV} for the most recent results. 
Under mild conditions, the convergence of the empirical measure 
$L_n=\frac1n \sum_{i=1}^n \delta_{z_i}$ of zeros of $P_n$ 
to the uniform measure on the unit circle goes back at least to \cite{sparo-sur} and \cite{erdos-turan}; scaled version of this convergence can be found in \cite{shepp-vanderbei} (for the Gaussian case) and \cite{IbragimovZ} (for more general i.i.d. coefficients
in the domain of attraction of stable laws). 

We are interested in the large deviations for the 
empirical measure $L_n$.
In the case of Gaussian coefficients, 
this has been studied before \cite{ZelditchZ},
\cite{berman}, 
\cite{Bloom}, exploiting methods related to those used
in the study of random matrices from the classical $\beta$-ensembles
\cite{BAG,BAZ,Ser}. Like in the case of random matrices, when one ventures away from the Gaussian setup (with i.i.d. coefficients), 
not much is known concerning large deviations.

Our goal in this paper is to exhibit a new class of coefficients, for which a large deviation principle for the empirical measure can be proved, namely
the class of i.i.d. exponential coefficients, which for concreteness 
we normalize to have parameter $1$. To our knowledge, the first to consider 
explicitly asymptotics for this class was Wenbo Li \cite{Li}, who 
used general formulae of Zaporozhets \cite{Za} in order to compute  the  
probability that all roots in such a polynomial are real. 
We relate our result to Li's computation in Theorem \ref{theo-Lihom} below.
  
In order to state our results, we introduce some notation. 
In the rest of the paper, $P_n$ denotes a random polynomial as in
\eqref{eq-poly}, with i.i.d. exponential (of parameter $1$)
coefficients $\{\xi_i\}$
and associated empirical measure of zeros $L_n$.
For any Polish space $X$, let $\M_1(X)$ denote the space of probability
measures on $X$, equipped with the topology of weak convergence. Let $\mbox{\rm pol}_+$ denote the collection of polynomials 
(over $\C$) with coefficients that are real positive. 
For $p\in \mbox{\rm pol}_+$, let $\mu_p\in \M_1(\C)$ 
denote the 
empirical measure
of zeros of $p$. Note that $\mu_p$ depends on the set of 
zeros and not on a particular labeling of the zeros, that
$\mu_p$ is symmetric with respect to the transformation $z\mapsto z^*$, and that
$\mu_p(\R_+)=0$. (Here and in the sequel, we use $\R_+$ to denote the interval
$(0,\infty)$.)
Finally, for any space $X$ and subset $A\subset X$, we let 
$A^\complement$ denote the complement of $A$ in $X$.

We introduce the closure of the collection of empirical measures of polynomials with positive coefficients  
\begin{equation}
        \label{eq-calP}
        \p=\overline{ \{\mu_p: p\in \mbox{\rm pol}_+\}} \subset \M_1(\C)\,. 
\end{equation}
Obviously, $L_n\in \p$. A characterization of $\p$, due 
to Bergweiler and Eremenko, appears in Theorem \ref{Ere} below.

\begin{definition}
 \label{logen}
 For any measure $\mu \in \M_1(\C)$, define the logarithmic potential function to be \[\LL_{\mu}(z)=\int \log |z-w| d\mu(w)\] 
 and the logarithmic energy to be 
 \[\Sigma(\mu)= \iint \log|z-w| \mu(z) \mu(w).\]
\end{definition}

\begin{definition}
 \label{prerf}
 Define the function $I:\M_1(\C)\to \R_+$
 by  
 \[ I(\mu)= \left \{ \begin{array} {ll} \int \log|1-z|d\mu(z) - \frac12\iint \log |z-w| d\mu(z) d\mu(w), &\mbox{\rm  if }  \mu \in \p, \\ \infty, &\mbox{\rm if } \mu \notin \p \end{array} \right.  \] 
\end{definition}
We will see in Section \ref{goodrate} 
that $I$ is well defined (for $\mu\in \p$, as  the integral with respect 
to $\mu\times\mu$ of the function $f(z,w)=\log|1-z|+\log|1-w|-\log|z-w|$)
and non-negative (the latter fact is immediate from
the lower
bound in Lemma \ref{lem-LB})\footnote{A. Eremenko showed us a direct proof of the non-negativity of $I$, that bypasses the use of the lower bound from Lemma \ref{lem-LB}. Since we need the latter lemma for other reasons, we do not reproduce his proof here.}. 

Our main result concerning large deviations of $L_n$ is the following.
\begin{theorem}
\label{ldp}
The random measures $L_n$ satisfy a large deviation principle in the space $\M_1(\C)$ with speed $n^2$ and  good rate function $I$. Explicitly, we have:
\begin{itemize}
        \item[(i)] The function 
                $I:\M_1(\C)\to [0,\infty]$ has compact level sets, 
                i.e. the sets  $ \{ \mu: I(\mu) \le M \}$ are compact subsets of  $\M_1(\C)$ for each  $M \in \R$. 
 \item[(ii)] For each open set $O \subset \M_1(\C)$, we have \[\liminf_{n \to \infty} \frac{1}{n^2} \log \P_n ( L_n \in O) \ge - \inf_{\mu \in O}I(\mu).\]
 \item[(iii)] For each closed set $F \subset \M_1(\C)$, we have \[\limsup_{n \to \infty} \frac{1}{n^2} \log \P_n ( L_n \in F) \le - \inf_{\mu \in F}I(\mu).\]
\end{itemize}
\end{theorem}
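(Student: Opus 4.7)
The plan is to derive the LDP from an explicit joint density for the roots of $P_n$, obtained via Zaporozhets' change-of-variables formula. Factoring $P_n(z)=\xi_n\prod_{i=1}^n(z-z_i)$, the identities $\xi_j=\xi_n(-1)^{n-j}e_{n-j}(z_1,\ldots,z_n)$ yield that the Jacobian of $(\xi_0,\ldots,\xi_{n-1})\mapsto(z_1,\ldots,z_n)$ (with $\xi_n$ fixed) equals $\xi_n^n|\Delta(z_1,\ldots,z_n)|$ up to a factor $2^s$ depending on the number $s$ of complex-conjugate pairs (negligible at scale $n^2$ since $2^s\le 2^{n/2}$). Combined with the coefficient density $\prod_j e^{-\xi_j}=\exp(-\xi_n\prod_i(1-z_i))$, using $\sum_j\xi_j=P_n(1)=\xi_n\prod_i(1-z_i)$, and integrating out $\xi_n\in(0,\infty)$, this yields the density of the unordered root configuration
\[
p_n(z_1,\ldots,z_n)\;\propto\;\frac{|\Delta(z_1,\ldots,z_n)|}{\bigl(\prod_{i=1}^n(1-z_i)\bigr)^{n+1}}\,\mathbf{1}_{\{(z_1,\ldots,z_n)\in\mathrm{pol}_+\}},
\]
where the indicator enforces positivity of all coefficients (hence of $\prod_i(1-z_i)$). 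Rewriting through $L_n$ gives $\log p_n = \tfrac{n^2}{2}\Sigma^{\neq}(L_n) - n^2\!\int\log|1-z|\,dL_n+o(n^2) = -n^2 I(L_n)+o(n^2)$, so the LDP at speed $n^2$ with rate $I$ is the natural consequence.

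For the upper bound over a closed $F\subset\M_1(\C)$, I would follow the classical $\beta$-ensemble strategy. First establish exponential tightness at speed $n^2$: the identities $\prod_i z_i=\pm\,\xi_0/\xi_n$ and analogous bounds on elementary symmetric functions, together with the exponential tails of the $\xi_j$, confine the roots to a fixed compact annulus around the unit circle with probability at least $1-e^{-cn^2}$. On this annulus, truncate the logarithmic kernel by replacing $\log|z-w|$ with $\max(\log|z-w|,-M)$; the resulting $I_M$ is weakly lower semicontinuous and bounded, so standard weak-convergence arguments give an upper bound at rate $I_M$, and sending $M\to\infty$ by monotone convergence (with truncation errors controlled by Lemma~\ref{lem-LB}) recovers the full rate $I$.

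The lower bound at an open $O$ requires, for each $\mu\in O\cap\p$ with $I(\mu)<\infty$, a deterministic microstate $(z_1^{(n)},\ldots,z_n^{(n)})$ whose empirical measures converge weakly to $\mu$, whose associated polynomial lies in $\mathrm{pol}_+$, and whose density contribution is at least $\exp(-n^2 I(\mu)-o(n^2))$. The Bergweiler--Eremenko characterization (Theorem~\ref{Ere}) identifies which $\mu$ are limits of empirical measures from $\mathrm{pol}_+$, so approximating polynomials exist; it remains to quantize $\mu$ on a fine grid, place small balls of radius $\asymp n^{-1}$ around each grid point, and verify that both the Vandermonde and the $\int\log|1-z|$ contributions converge to the correct limits. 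Compactness of the level sets of $I$ and its lower semicontinuity then follow from the representation $I(\mu)=\tfrac12\iint f(z,w)\,d\mu(z)\,d\mu(w)$ with $f(z,w)=\log|1-z|+\log|1-w|-\log|z-w|$ bounded below (Lemma~\ref{lem-LB}), combined with the exponential tightness above. The principal obstacle I anticipate is the microstate construction: since $\mathrm{pol}_+$ is a nonopen subset of the real polynomials, positivity is a global constraint on the roots, and the quantization of a generic $\mu\in\p$ must be done compatibly with $\mathrm{pol}_+$---this is precisely where the Bergweiler--Eremenko description enters in an essential way.
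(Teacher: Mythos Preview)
Your plan captures the overall architecture correctly---derive the explicit density, prove exponential tightness, then do a truncation-based upper bound and a microstate-based lower bound---but there is a substantive ingredient missing that affects several of your steps simultaneously: \emph{Obrechkoff's theorem}. The function $f(z,w)=\log|1-z|+\log|1-w|-\log|z-w|$ is \emph{not} bounded below (your citation of Lemma~\ref{lem-LB} for this is mistaken; that lemma is the LDP lower bound, not a pointwise bound on $f$). The singularity at $z=1$ is genuine, and roots of polynomials in $\mathrm{pol}_+$ can come arbitrarily close to $1$, so neither the well-definedness of $I$, nor the lower semicontinuity, nor the truncation step in the upper bound go through without an a priori control on how much mass $\mu\in\p$ can place near $1$. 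The paper supplies this via Obrechkoff's theorem: any $\mu\in\p$ satisfies $\mu(C_\alpha)\le 2\alpha/\pi$ for the cone $C_\alpha$ around $\R_+$, which yields a uniform bound $\int_{\{|1-z|<e^{-M}\}}|\log|1-z||\,d\mu(z)\le C(M)\to 0$ (Lemma~\ref{obr2}). This is what makes the rate function finite, the truncation errors uniform over $\p$, and the level sets compact.

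Your exponential-tightness sketch also needs correction: the roots are \emph{not} confined to a fixed compact annulus with probability $1-e^{-cn^2}$; individual roots can be large. What is tight at speed $n^2$ is the empirical measure, and the paper proves this by controlling the random variable $X_n=n^{-2}\log(P_n(1)/\xi_n)=n^{-2}\sum_i\log|1-z_i|$ directly from the coefficient law, then showing that $\{J(\mu)\le B\}$ is compact in $\p$ (again using Obrechkoff near $1$). Finally, for the lower bound microstate you correctly flag the difficulty, but you should know that Bergweiler--Eremenko alone does not hand you a polynomial with strictly positive coefficients from an arbitrary $\mu\in\p$; the paper first approximates $\mu$ by measures in a special dense class $\d$ (compact support away from a cone around $\R_+$, strict potential inequality, bounded density), then invokes \emph{De Angelis' theorem} to guarantee that a suitable power of the approximating polynomial has all coefficients positive, and uses an openness result (Theorem~\ref{convergence1}) to get a full-volume neighbourhood of admissible configurations.
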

Comparing the statement of Theorem \ref{ldp} with the main results in \cite{BAZ} and \cite{ZelditchZ}, one sees that in spite of the fact that we are dealing with zeros of random polynomials, the rate function is closer to a random matrix theory rate function than to the one appearing in the Gaussian case. This is due to the expression for the joint distribution of zeros, see Section
\ref{sec-prelim-I} below.  
We also note that because $I$ is a good rate function, 
any minimizer $\mu$ of $I(\cdot)$ in $\M_1(\C)$ must satisfy that
$I(\mu)=0$; in particular, the uniform measure on the unit circle is 
a minimizer, 
 as one expects from the limit results in
\cite{erdos-turan}, \cite{sparo-sur}. 
The strict convexity of $I$
(which follows from the same argument as in  \cite{BAG} together
with the convexity of $\p$) shows that
it is the unique minimizer.

As mentioned above, we tie our results to Li's computation in \cite{Li}. Toward this end, let $\R_-=\R\setminus\R_+$ and define
$\mu_R\in \M_1(\R_-)\subset \M_1(\C)$ to be such that 
$I(\mu_R)=\inf_{\mu\in \M_1(\R_-)}I(\mu)=:I_R$. 
(Such a minimizer exists due to the lower semicontinuity of $I$.)
\begin{theorem}
\label{theo-Lihom}
Conditioned on $L_n\in \M_1(\R_-)$, 
the sequence of random empirical measures $L_n$ satisfy the 
large deviation principle in $\M_1(\R_-)$ with speed $n^2$ and rate function $I_R(\mu)=I(\mu)-I_R$. In particular, conditioned on $L_n\in M_1(\R)$, the sequence $L_n$ converges weakly to $\mu_R\in M_1(\R_-)$.
\end{theorem}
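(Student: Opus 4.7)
The strategy is to establish a restricted large deviation principle for $L_n$ on the weakly closed set $\M_1(\R_-) \subset \M_1(\C)$ with speed $n^2$ and rate function equal to the restriction of $I$, from which the conditional statement of Theorem \ref{theo-Lihom} follows immediately by dividing by $\P(L_n \in \M_1(\R_-))$.

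\textbf{Upper bound.} Since $\R_- = (-\infty,0]$ is closed in $\C$, the Portmanteau theorem implies that $\M_1(\R_-)$ is weakly closed in $\M_1(\C)$. Consequently, any $F$ closed in the subspace topology on $\M_1(\R_-)$ is closed in $\M_1(\C)$, and Theorem \ref{ldp}(iii) yields $\limsup_n n^{-2} \log \P(L_n \in F) \le -\inf_F I$. In particular, $\limsup_n n^{-2} \log \P(L_n \in \M_1(\R_-)) \le -I_R$.

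\textbf{Lower bound.} The set $\M_1(\R_-)$ has empty interior in $\M_1(\C)$, so Theorem \ref{ldp}(ii) is useless on its own; instead, we rework the Coulomb-gas analysis directly on the event that all roots of $P_n$ are real. Writing the roots as $-y_1 < \cdots < -y_n$ with $y_i > 0$ (positivity is automatic from the positivity of the $\xi_j$), and performing the change of variables $(\xi_0,\ldots,\xi_n) \mapsto (y_1,\ldots,y_n,\xi_n)$, whose Jacobian is $\xi_n^n \prod_{i<j}(y_j - y_i)$ by Vieta's formulas $\xi_k = \xi_n e_{n-k}(y)$, and using the identity $\sum_k \xi_k = \xi_n \prod_i(1+y_i)$, integration against $e^{-\sum_k \xi_k}$ over $\xi_n \in (0,\infty)$ yields the joint density of the ordered roots:
\[
f_n(y_1,\ldots,y_n) = \frac{n!\,\prod_{i<j}(y_j - y_i)}{\prod_i (1+y_i)^{n+1}}\,.
\]
In terms of $z_i = -y_i$ one has $\tfrac{1}{n^2}\log f_n(y) = -I(L_n) + o(1)$, so a Laplace-type analysis on $(-\infty,0)^n$, entirely parallel to the argument proving Theorem \ref{ldp} and to the $\beta$-ensemble analysis of \cite{BAG}, gives the matching LDP lower bound on $\M_1(\R_-)$: for any open $O$ in the subspace topology, $\liminf_n n^{-2} \log \P(L_n \in O) \ge -\inf_O I$.

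Combining the two bounds yields $\lim_n n^{-2} \log \P(L_n \in \M_1(\R_-)) = -I_R$, and the conditional LDP with rate $I_R(\mu) = I(\mu) - I_R$ follows from the identity $\P(L_n \in A \mid L_n \in \M_1(\R_-)) = \P(L_n \in A)/\P(L_n \in \M_1(\R_-))$. The weak convergence of the conditioned $L_n$ to $\mu_R$ is then a standard consequence of the LDP together with the uniqueness of $\mu_R$ as a minimizer of $I$ on $\M_1(\R_-)$, itself an immediate consequence of the strict convexity of $I$ (noted after Theorem \ref{ldp}) and the convexity of $\M_1(\R_-)$. The main obstacle is the matching lower bound at an arbitrary $\mu \in \M_1(\R_-)$ with $I(\mu) < \infty$: one has to approximate $\mu$ by measures whose quantile configurations realize both the Vandermonde energy and the single-point $\log|1-z|$ integral up to $o(n^2)$ error, a regularization argument identical in spirit to the one in \cite{BAG}, which is made easier here by the fact that the relevant configurations live on the convex set $\R_-$ rather than a general compact subset of $\C$.
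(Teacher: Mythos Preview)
Your proposal is correct and follows essentially the same route as the paper: the upper bound is inherited from Theorem~\ref{ldp}(iii) via the closedness of $\M_1(\R_-)$, while the lower bound is obtained by specializing the Coulomb-gas analysis to the $k=0$ (all-real-roots) piece of \eqref{dist2} and invoking the \cite{BAG} argument, after which the conditional LDP and convergence to $\mu_R$ follow from strict convexity exactly as you indicate. The paper's own proof is a one-paragraph sketch that makes the same moves; your explicit computation of the density $f_n$ and your remark about the regularization step in the lower bound simply flesh out what the paper leaves implicit.
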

A characterization of $\mu_R$ is given in the next theorem, due to J. Baik.
\begin{theorem}
  \label{theo-baik}
  The minimizer $\mu_R$ has density with respect of
  Lebesgue measure on $\R_-$ equal to 
  \begin{equation}
    \label{eq-baik}
    \phi(x)=\frac{1}{\pi (|x|+1)\sqrt{|x|}}{\bf 1}_{\{x<0\}}\,.
  \end{equation}
\end{theorem}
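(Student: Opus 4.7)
The plan is to identify $\mu_R$ via its Euler--Lagrange characterization and then to verify the candidate density $\phi$ by a Poisson-integral computation. Observe first that any $\mu \in \M_1(\R_-)$ belongs to $\p$, since a polynomial $\prod_i(z-x_i)$ with $x_i<0$ has strictly positive coefficients; therefore the restriction of $I$ to $\M_1(\R_-)$ inherits the strict convexity of $I$ on $\p$ (cf.\ the convexity argument invoked for the uniqueness of the minimizer after Theorem \ref{ldp}). The minimizer $\mu_R$ is thus unique and is characterized by the existence of a Lagrange multiplier $c\in\R$ for which
$$F_\mu(x) \;:=\; \log|1-x| \;-\; \int_{\R_-} \log|x-y|\,d\mu(y)$$
equals $c$ for $\mu_R$-a.e.\ $x$ and satisfies $F_\mu(x)\ge c$ for every $x\in\R_-$. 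Hence it suffices to show that for the probability measure with density $\phi$, the function $F_{\mu_R}$ is identically $0$ on $\R_-$.

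Next I would change variables $x=-s$, $y=-t$ ($s,t>0$), turning the equation $F_{\mu_R}(-s)=0$ into
$$\int_0^\infty \frac{\log|s-t|}{\pi(1+t)\sqrt{t}}\,dt \;=\; \log(1+s),$$
and then make the further substitutions $s=r^2$, $t=u^2$, which transform the left-hand side into
$$\frac{2}{\pi}\int_0^\infty \frac{\log|r^2-u^2|}{1+u^2}\,du
\;=\; \frac{2}{\pi}\int_0^\infty \frac{\log|r-u|+\log(r+u)}{1+u^2}\,du
\;=\; \frac{2}{\pi}\int_{-\infty}^\infty \frac{\log|u-r|}{1+u^2}\,du,$$
using the symmetry $u\mapsto -u$ in the last step. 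The function $\log|z-r|$ is harmonic on the upper half-plane with boundary values $\log|u-r|$, so the Poisson integral formula evaluated at $z=i$ gives
$$\frac{1}{\pi}\int_{-\infty}^\infty \frac{\log|u-r|}{1+u^2}\,du \;=\; \log|i-r| \;=\; \tfrac{1}{2}\log(1+r^2),$$
and multiplying by $2$ yields exactly $\log(1+r^2)=\log(1+s)$, as required. The total mass $\int_0^\infty \phi(s)\,ds=1$ falls out of the same substitution $s=u^2$, reducing it to $\frac{2}{\pi}\int_0^\infty du/(1+u^2)=1$.

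There is no serious obstruction in this approach: the entire identification collapses onto a single Poisson-kernel evaluation once the substitution $s=r^2$ is made, which lifts the half-line potential problem to an even problem on all of $\R$. The only points requiring attention are the strict convexity of $I$ on $\M_1(\R_-)$ (handled by the inclusion $\M_1(\R_-)\subset\p$) and the verification that $F_{\mu_R}\equiv 0$ holds throughout $\R_-$, not merely on $\supp(\mu_R)$; this, together with the strict positivity of $\phi$, is consistent with $\supp(\mu_R)=\R_-$ and pleasantly forces the Lagrange multiplier $c$ to vanish.
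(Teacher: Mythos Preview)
Your proposal is correct and follows essentially the same route as the paper: both reduce to the Euler--Lagrange characterization, make the substitution $t=u^2$ to transform the half-line logarithmic potential, and then evaluate the resulting integral. The only cosmetic difference is that the paper phrases the evaluation as a residue computation on the contour $\{r\}_{r=-R}^{R}\cup\{Re^{i\theta}\}_{\theta=0}^{\pi}$ with a pole at $i$, whereas you recognize the same integral as the upper-half-plane Poisson integral of $\log|u-r|$ evaluated at $z=i$; these are the same calculation in different clothing.
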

\noindent
An interesting feature of the minimizer $\mu_R$ is that 
it is not compactly supported.
We discuss Theorems \ref{theo-Lihom} and \ref{theo-baik} in Section \ref{sec-6}.

\vskip 1cm
\noindent
{\bf History and Acknowledgements:} 
Our interest in this problem started when one of us
(O.Z.) attended a talk by Wenbo Li on \cite{Li}; that talk
suggested that an underlying 
large deviation principle should exist in the real case, and 
J. Baik computed its equilibrium measure, 
repeated here as Theorem \ref{theo-baik}.  
Wenbo Li's untimely  death
prompted S. G. and O. Z.  
to revisit the problem, and the important
role of the class $\p$ 
in the complex case emerged. 
We posted the question concerning the characterization of $\p$
on MathOverflow \cite{MO}, and the question was answered in \cite{BES}.

We are indebted to J. Baik for allowing us to use his proof of Theorem 
\ref{theo-baik}, and to
A. Eremenko for making \cite{BES} available to us as a preprint,  for
his patience in answering our questions, and for his comments on a preliminary
draft of this paper. We thank the anonymous referee 
for a careful reading of the paper and for spotting 
an error in the original version of this article.

\section{Preliminaries}
We discuss in this section several preliminaries. We first introduce the joint distribution of zeros and  then we describe properties of $\p$.
\subsection{The joint distribution of zeros}
\label{sec-prelim-I}
Let $p\in \mbox{\rm pol}_+$ be of degree $n$ with $n-2k$ real zeros, 
$k=0,1,\ldots,\lfloor n/2\rfloor$. We consider the zeros of 
$p$ as a vector $(z_1,\cdots,z_n)$ with the convention that $z_1,\cdots, 
z_k$ are the 
non-real zeros with positive imaginary part, 
$z_{k+1}=\olz_1,\cdots,z_{2k}=\olz_{k}$ and  $z_{2k+1},\cdots,z_{n}$ 
denote the $n-2k$ real zeros. In this notation, for $k$ fixed, a
set of zeros is generically  mapped to $k! (n-2k)!$ distinct points in 
$A_{n,k}^+=\C_+^k\times \C_-^k\times \R^{n-2k}$, and 
$A^+_{n,k}$ is parametrized by $\C_+^k\times \R^{n-2k}$.

Performing the change of variables from 
$(\xi_0,\ldots,\xi_n)$ to $(z_1,\ldots,z_k,z_{n-2k+1},\ldots,z_n,\xi_n)$,
counting multiplicities, using the form of the exponential density
and integrating over the density of $\xi_n$
(see \cite{Za} for a similar computation), one has that the random 
polynomial $P_n$ induces the following measure on $\C^n$:
\begin{eqnarray}
 \label{dist1}
 &&d\P_n(z_1,\cdots,z_n)=\\
 &&\sum_{k=0}^{\lfloor n/2 \rfloor} \frac{2^{k}}
 {k!(n-2k)!}\frac{\prod_{1\le i < j \le n} 
 |z_i - z_j|}{\prod_{j=1}^n |1-z_j|^{n+1}} 
 1_{B_{n,k}}(z_1,\cdots,z_n) 
 d\el(z_1)\cdots 
 d\el(z_k) d\ell(z_{2k+1})\cdots d\ell(z_n).\nonumber
\end{eqnarray}
Here $\el$ is the Lebesgue measure on $\C$,
$\ell$ is the Lebesgue measure 
on $\R$, and $B_{n,k}$ consists of the $n$-tuples
$(z_1,\ldots,z_n)\subset A_{n,k}$ that can be obtained as the zero set (with $n-2k$ real zeros)
of a polynomial
of degree $n$ with positive coefficients.
In particular, letting $A_{n,k}=(\C\setminus\R)^{2k}\times \R^{n-2k}
\subset \C^n$, we see that the density of 
$\P_n$ on any fixed $A_{n,k}$ 
is
\begin{equation}
 \label{dist2}
 \frac{1}{\mathcal{Z}_{n,k}}
 1_{B_{n,k}}(z_1,\cdots,z_n) 
 \exp\l( \sum_{1\le i < j \le n} \mathrm{log} |z_i - z_j| - 
 (n+1) \sum_{j=1}^n \mathrm{log}|1-z_j| \r).
\end{equation}
where the constants $\mathcal{Z}_{n,k}$ satisfy that
\begin{equation}
        \label{eq-znk}
        \lim_{n\to\infty}
        \frac{1}{n^2}\log
        \max_{k=1}^{\lfloor n/2\rfloor}
        \mathcal{Z}_{n,k}=
        \lim_{n\to\infty}
        \frac{1}{n^2}\log \,
        \min_{k=1}^{\lfloor n/2\rfloor}
        \mathcal{Z}_{n,k}=
        0\,.
\end{equation}
The representation \eqref{dist2} with \eqref{eq-znk}
is particularly suited for LDP analysis.

\subsection{Properties of the class $\p$ of measures}
Obviously, for any $p\in \mbox{\rm pol}_+$ with $\mu_p$ its 
empirical measure of zeros, we have that $\mu_p(\R_+)=0$. However, that 
property is not preserved by weak convergence, and hence 
a-priori it is not clear that all measures in $\p$ satisfy it
(although we will see, as a consequence of Obrechkoff's theorem below,
that in fact they do). 
In this subsection, we discuss this and other properties of the class $\p$. 

\subsubsection{Obrechkoff's Theorem}
A starting point for
the description of $\p$ is the following classical theorem.
\begin{theorem}[Obrechkoff]
 \label{obr1}
 Let $p\in \mbox{\rm pol}_+$ and let
 $$C_\alpha=\{z\in \C: |\arg{z}|\leq \alpha\}$$
denote
the symmetric (around the positive real line)
cone in $\C$ with apex at the origin and angle $2\alpha$.
Then, $\mu_p(C_\alpha)\leq 2\alpha/\pi$.
\end{theorem}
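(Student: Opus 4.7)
The plan is to apply the argument principle on a large truncated sector and combine it with Descartes' rule of signs applied to the imaginary part of $p$ along the boundary ray.

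After a harmless perturbation of $\alpha$, one may assume $p$ has no zeros on the rays $\arg z = \pm\alpha$ and that the positive real zeros of $v(r) := \mathrm{Im}\, p(re^{i\alpha})$ are simple. For $R$ large, consider $W_R = \{z \in \C : |\arg z|\leq\alpha,\ |z|\leq R\}$. Applying the argument principle to $p$ on $W_R$ and taking $R \to \infty$: the outer arc contributes $2n\alpha$ (since $p(z) \sim a_n z^n$), while by the conjugation symmetry $p(re^{-i\alpha}) = \overline{p(re^{i\alpha})}$ the two radial segments contribute $\pm\Delta_+$, where $\Delta_+$ denotes the continuous change of $\arg p(re^{i\alpha})$ as $r$ runs over $[0,\infty)$. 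Accounting for orientation yields $n\mu_p(C_\alpha) = (n\alpha - \Delta_+)/\pi$, reducing the claim to the lower bound $\Delta_+ \geq -n\alpha$.

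To estimate $\Delta_+$, decompose $v(r) = \sum_{k=0}^n a_k \sin(k\alpha)\, r^k$. Since all $a_k > 0$, Descartes' rule of signs bounds the number $m$ of positive real zeros of $v$ by the number of sign changes of $(\sin(k\alpha))_{k=0}^n$, which is at most $\lfloor n\alpha/\pi\rfloor$. The curve $\gamma(r) := p(re^{i\alpha})$ satisfies $\gamma(0) = a_0 > 0$ and $\gamma(r) \sim a_n r^n e^{in\alpha}$, so its continuous argument begins at $0$ and eventually approaches $n\alpha$ modulo $2\pi$. The curve meets the real axis precisely at $r_1 < \cdots < r_m$; between consecutive zeros $\gamma$ stays in one open half plane, so on each of the resulting $m+1$ subintervals the continuous argument is confined to an open interval of length $\pi$ with endpoints at consecutive integer multiples of $\pi$ (with the asymptotic direction $n\alpha$ replacing one endpoint on the final subinterval).

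Crucially, on the initial subinterval $(0, r_1)$ the curve $\gamma$ lies in the upper half plane, because $v(r) \sim a_1 \sin(\alpha)\, r > 0$ as $r \to 0^+$ (using strict positivity $a_1 > 0$ and $\alpha \in (0, \pi)$); hence the change of argument on $(0, r_1)$ is nonnegative, lying in $[0,\pi]$. Each of the $m - 1$ interior subintervals contributes a change in $\{-\pi, 0, \pi\}$, and the final subinterval contributes a change in $(-\pi,\pi)$. Summing the lower bounds,
\[
 \Delta_+ \;\geq\; 0 + (m-1)(-\pi) + (-\pi) \;=\; -m\pi \;\geq\; -\lfloor n\alpha/\pi\rfloor\,\pi \;\geq\; -n\alpha,
\]
completing the proof. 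The delicate part will be the last bookkeeping step: verifying the claimed bounds on each subinterval requires a case analysis on the integer lifts at each zero $r_j$ of $v$, and the simplifying perturbation (simple zeros of $v$, no zeros of $p$ on $\arg z = \pm\alpha$) is removed by approximation, using that $\mu_p(C_\alpha)$ is upper semicontinuous in $\alpha$ for fixed $p$.
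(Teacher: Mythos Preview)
Your proof is correct and is precisely the classical argument of Obrechkoff that the paper cites (the paper does not reproduce the proof, it only references \cite{obrechkoff} and remarks that it uses the argument principle). Both arguments apply the argument principle on the truncated sector and bound the variation of $\arg p$ along the radial edge via the Descartes sign-change count for $(\sin k\alpha)_{k}$, yielding $\le \lfloor n\alpha/\pi\rfloor$ crossings and hence $\Delta_+\ge -n\alpha$.
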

The proof, given in \cite{obrechkoff}, uses the argument principle. 
For our needs, note that Obrechkoff's Theorem implies that
$\mu(C_\alpha)\leq 2\alpha/\pi$ for any $\mu\in \p$. In particular,
$\mu(\R_+)=0$ for such $\mu$.

Obrechkoff's Theorem
leads to the following lemma on the integrability of the logarithm near 1 for 
$\mu\in \p$.
 
\begin{lemma}
\label{obr2}
Let $M>0$ and  set
$A_M=\{z:\log|1-z| \le -M\}$. 
Then there is a positive quantity $C(M)$ satisfying
$\lim_{M\to\infty} C(M)=0$ such that for any $\mu\in\p$,
\begin{equation}
\label{eq-lem2.2}
\max\{\mu(A_M),\int_{A_M} \l| \log|1-z| \r| d\mu(z)\}\le C(M).
\end{equation}
\end{lemma}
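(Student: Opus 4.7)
The set $A_M = \{z : \log|1-z|\le -M\}$ is the closed disk of radius $e^{-M}$ centered at $z=1$. Since $e^{-M}\le 1$, the origin lies outside this disk, and an elementary computation with the right triangle formed by the origin, the center $1$, and a point of tangency shows that the two tangent lines from the origin to $\partial A_M$ make angle exactly $\arcsin(e^{-M})$ with the positive real axis. Hence $A_M\subset C_{\alpha_M}$ with $\alpha_M := \arcsin(e^{-M})$. By the consequence of Obrechkoff's Theorem noted immediately after Theorem~\ref{obr1}, any $\mu\in\p$ therefore satisfies
\[
\mu(A_M)\le \mu(C_{\alpha_M})\le \frac{2\alpha_M}{\pi}\le e^{-M},
\]
where the last step uses the convexity inequality $\arcsin x \le (\pi/2)x$ on $[0,1]$. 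The same argument applied to any $t\ge 0$ yields the uniform estimate $\mu(A_t)\le e^{-t}$, which is what is needed to control the integral.

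For the integral bound, the idea is to use the layer-cake formula, noting that $|\log|1-z|| = -\log|1-z|$ on $A_M$. Splitting at the threshold $t=M$, for $0\le t<M$ the super-level set $\{z\in A_M : -\log|1-z|>t\}$ equals $A_M$, while for $t\ge M$ it coincides with $\{z : |1-z|<e^{-t}\}\subset A_t$. Therefore
\[
\int_{A_M} |\log|1-z||\,d\mu(z) = \int_0^\infty \mu\bigl(\{z\in A_M : -\log|1-z|>t\}\bigr)\,dt \le M\mu(A_M) + \int_M^\infty \mu(A_t)\,dt,
\]
and plugging in the estimate $\mu(A_t)\le e^{-t}$ from the previous paragraph bounds the right-hand side by $Me^{-M} + e^{-M} = (M+1)e^{-M}$.

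Setting $C(M) := (M+1)e^{-M}$ then establishes both inequalities in \eqref{eq-lem2.2} simultaneously, and clearly $C(M)\to 0$ as $M\to\infty$. The only slightly nonroutine step is the geometric inclusion $A_M \subset C_{\alpha_M}$ via the tangent line computation, which is what unlocks Obrechkoff's estimate; after that, the argument is a direct application of the layer-cake formula and presents no genuine obstacle.
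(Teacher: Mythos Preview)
Your proof is correct and rests on the same core idea as the paper's --- Obrechkoff's bound on the mass $\mu$ places in thin cones around $\R_+$, applied to nested disks around $1$ --- but your execution is cleaner in two respects. First, you invoke directly the extension of Obrechkoff's inequality to all $\mu\in\p$ (stated just after Theorem~\ref{obr1}), whereas the paper first proves the estimate for empirical measures $\mu_p$ with $p\in\mbox{\rm pol}_+$ and only afterwards passes to the closure $\p$ via an approximation argument involving truncation and monotone convergence. Second, you use the layer-cake formula to turn the tail bound $\mu(A_t)\le e^{-t}$ into the integral bound, while the paper uses the discrete analogue, a dyadic decomposition over the annuli $B_j=\{-(j+1)<\log|1-z|\le -j\}$. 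Both are equivalent devices; your continuous version yields the explicit constant $C(M)=(M+1)e^{-M}$ and avoids the separate limiting step, so it is a modest but genuine streamlining of the paper's argument.
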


\begin{proof}
        We first consider $p\in \mbox{\rm pol}_+$ of degree $N$. For $M>0$, let
        $Z_M$ consist of all zeros $z_i$ of $p$ such that $\log|1-z_i|\le -M$. 
        Let $N(p,M)$ be the cardinality of $Z_M$ and $S(p,M)=\sum_{i:z_i\in Z_M}
        \log|1-z_i|$. By Theorem 
        \ref{obr1}, there exists a constant $M_0$ independent of $p$ or $N$ 
        such that  for $M>M_0$,
$N(p,M) \le 4e^{-M}N$. Thus, 
with $B_j:=\{z: -(j+1)<\log|1-z| \le -j: j \ge M\}$, 
we get
\[\frac{1}{N}|S(p,M)| \le 
        \frac1N \sum_{j=M}^\infty \sum_{i: z_i\in B_j}|\log|1-z_i||
        \leq
\sum_{j=M}^{\infty} j \cdot 4e^{-j} =:c(M)\,,\] 
with $c(M)\to_{M\to\infty} 0$. 
Since $|S(p,M)|\geq N(p,M)$ if $M_0$ is chosen large enough, we obtain
the same inequality for $N(p,M)/N$. Thus, 
\eqref{eq-lem2.2} holds for $\mu_p$, uniformly in $p,N$.

To obtain the same inequality for $\mu\in\p$, take an approximating sequence
$\mu_{p_n}\to\mu$, and use that 
$\mu(A_M)\leq \limsup_{n\to\infty} \mu_{p_n}(A_{M-1})$ together with
$$\int_{A_M} |\log|1-z|\vee -K|d\mu(z)\leq 
\limsup_{n\to\infty}
\int_{A_{M-1}} |\log|1-z|\vee -K|d\mu_{p_n}(z)\leq 
c(M-1)\,,$$
and then apply monotone convergence over $K$.
One concludes that \eqref{eq-lem2.2} holds with $C(M)=c(M-1)$.
\end{proof}

\subsubsection{The Bergweiler-Eremenko Theorem}
For $\mu \in \M_1(\C)$,
let
$$ \widehat{\LL}_\mu(z)=\int_{|w|\leq 1} \log(|z-w|)d\mu(w)+
\int_{|w|>1} \log(|1-\frac{z}{w}|)d\mu(w)\,.$$
Whenever $C_\mu:=\int \log_+|w|\mu(dw)<\infty$, it holds that 
$$\widehat{\LL}_{\mu}(z)=\LL_{\mu}(z)-C_\mu\,.$$ 
In a recent work \cite{BES}, Bergweiler and Eremenko  proved the following.
\begin{theorem}[Bergweiler-Eremenko]
\label{Ere}
$\mu\in \p$ if and only if
it is invariant with respect to conjugation and satisfies
$\widehat{\LL}_{\mu}(z) \le \widehat{\LL}_{\mu}(|z|)$ for all $z \in \C$.
\end{theorem}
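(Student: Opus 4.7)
The plan is to treat the two implications of the characterization separately. The forward direction (necessity) follows by combining the triangle inequality applied to $\mbox{\rm pol}_+$ polynomials with a weak-limit argument, and is essentially routine. The reverse direction (sufficiency) requires an explicit construction of approximating polynomials and is where the substantial work lies.

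\textbf{Necessity.} For $p \in \mbox{\rm pol}_+$ of degree $d$ with zeros $\{z_i\}$, conjugation symmetry of $\mu_p$ is immediate. The triangle inequality gives $|p(z)| \le p(|z|)$ for all $z \in \C$, since $p(z) = \sum c_j z^j$ with $c_j \ge 0$ has no cancellation on the positive real axis. Using the factorization
\[
 p(z) = c_d \prod_{|z_i|\le 1}(z - z_i) \cdot \prod_{|z_i|>1}(-z_i)\!\left(1 - \frac{z}{z_i}\right),
\]
taking $\tfrac{1}{d}\log$, and noting that all terms independent of $z$ cancel between both sides, this inequality is exactly $\widehat{\LL}_{\mu_p}(z) \le \widehat{\LL}_{\mu_p}(|z|)$. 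To pass to a weak limit $\mu_{p_n} \to \mu$, I would argue that, at any fixed $z$, the integrand $w \mapsto \log|z-w|\mathbf{1}_{|w|\le 1} + \log|1 - z/w|\mathbf{1}_{|w|>1}$ is bounded above on compact sets, and its logarithmic singularity (near $w = z$) has integrals controlled uniformly in $\mu \in \p$ via a local adaptation of Lemma \ref{obr2}. This yields upper semi-continuity of $\mu \mapsto \widehat{\LL}_\mu(z)$, and continuity at $z$ that is not an atom of $\mu$. Passing to the limit along those $z$ and then extending by upper-semicontinuity of $\widehat{\LL}_\mu(\cdot)$ in $z$ closes the argument.

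\textbf{Sufficiency.} Given a conjugation-invariant $\mu$ with $\widehat{\LL}_\mu(z) \le \widehat{\LL}_\mu(|z|)$, I aim to exhibit $p_n \in \mbox{\rm pol}_+$ with $\mu_{p_n} \to \mu$. A first, essentially unavoidable step is to establish the polynomial analogue of the characterization: a real polynomial $p$ satisfies $|p(z)| \le p(|z|)$ for all $z \in \C$ if and only if $p \in \mbox{\rm pol}_+$. The ``if'' direction is trivial; for the ``only if'' direction I would first note that the hypothesis forces $p(r) \ge 0$ for $r \ge 0$, so in particular the leading and constant coefficients are non-negative, and then analyze the intermediate Taylor coefficients via the Cauchy integral
\[
 c_k\,r^k = \frac{1}{2\pi}\int_0^{2\pi} p(re^{i\theta})e^{-ik\theta}\,d\theta,
\]
using conjugation symmetry and the maximum-modulus-on-every-circle hypothesis to rule out sign changes. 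With this polynomial analogue in hand, the remaining task is to construct conjugation-symmetric atomic measures $\nu_n$ converging weakly to $\mu$, such that the associated monic polynomials $P_n(z) = \prod(z-w)^{\nu_n(\{w\})}$ themselves satisfy $|P_n(z)| \le P_n(|z|)$ on $\C$.

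\textbf{Main obstacle.} The construction of such $\nu_n$ is the crux. A naive cellular quantization of $\mu$ does not work: a single conjugate pair $w,\overline{w}$ with $\mathrm{Re}(w) > 0$ contributes a quadratic factor $z^2 - 2\mathrm{Re}(w)z + |w|^2$ with a negative middle coefficient, so restoration of positivity in the full product requires global cancellation among many factors. A workable scheme must coordinate the angular placement of atoms across all of $\C$, exploiting that $\widehat{\LL}_\mu$ is maximized on each circle at the positive real axis to guide a radial-slice quantization that respects the measure inequality throughout. Making this precise, and in particular showing that the resulting polynomials satisfy $|P_n(z)| \le P_n(|z|)$ \emph{pointwise} on $\C$ rather than only in some averaged sense, is the substantive content of the Bergweiler--Eremenko argument in \cite{BES}, which I would invoke rather than reconstruct from scratch.
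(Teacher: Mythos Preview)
The paper does not itself prove Theorem~\ref{Ere}; it is quoted from \cite{BES} and used as a black box, with portions of the \cite{BES} construction reproduced later in Section~\ref{erapprox} for other purposes. In that sense your proposal, which ultimately defers the sufficiency direction to \cite{BES}, lands in the same place as the paper.

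Your necessity argument is essentially correct: $|p(z)|\le p(|z|)$ for $p\in\mbox{\rm pol}_+$ is the triangle inequality, the passage to $\widehat{\LL}_{\mu_p}$ is the factorization you wrote, and the limit step can be carried out via $L^1_{\mathrm{loc}}$ convergence of logarithmic potentials under weak convergence of measures.

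Your sketch of sufficiency, however, misrepresents the actual \cite{BES} strategy. First, the ``polynomial analogue'' you state is false as written: $p(z)=z^2+1$ satisfies $|p(z)|\le p(|z|)$ everywhere but is not in $\mbox{\rm pol}_+$ (which requires strictly positive coefficients). More importantly, even a corrected version of that equivalence is not the engine of the proof. The \cite{BES} argument does \emph{not} try to produce atomic $\nu_n$ whose polynomials $P_{\nu_n}$ satisfy $|P_{\nu_n}(z)|\le P_{\nu_n}(|z|)$ directly---a non-strict potential inequality for $\mu$ is not stable under discretization, so no ``radial-slice quantization'' will deliver that. Instead the route is: (i) regularize $\mu$ through the multi-step procedure (what appears in Section~\ref{erapprox}) so that the inequality becomes \emph{strict} off $\overline{\R}_+$ and the potential has the correct first-order expansion at $0$ and $\infty$; (ii) discretize to an atomic $\nu_n$ close enough that strictness persists; (iii) apply De Angelis' theorem (Theorem~\ref{dat}) to conclude that some \emph{power} $P_{\nu_n}^{m}$ lies in $\mbox{\rm pol}_+$. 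The passage through powers via De Angelis is essential and is absent from your outline.
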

In the proof of Theorem \ref{ldp},
we will exploit this result, and in addition, its proof. 

\subsection{Plan of the proof of Theorem \ref{ldp}}
The proof of Theorem \ref{ldp} is divided into sections.
Section \ref{rateprop}, 
is devoted to establishing  
properties of $I$ and to a proof of the upper bound in Theorem \ref{ldp}.
Sections \ref{potential} and \ref{meastop} deal with 
potential theoretic preliminaries 
that play an important role in the proof of the lower bound 
in Theorem \ref{ldp}.
Section \ref{lbound} states and proves Lemma \ref{lem-LB}, 
which is the lower bound; the proof of Lemma \ref{lem-LB} uses some
some technical
approximation lemmas whose proofs are  
postponed to Sections \ref{sec-proofstage1-1} and \ref{erapprox}.

\section{Properties of the rate function}
\label{rateprop}
In Section \ref{goodrate}),
we establish 
properties of $I$ and establish that it is well defined and 
lower-semicontinuous.
In Section \ref{etight}, we prove the exponential tightness of $\{\P_n\}$. 
Section \ref{ubound} is devoted to the proof of the upper bound. 

\subsection{$I$ is well defined and has compact level sets.}
\label{goodrate}


Here we prove the following.
\begin{lemma}
        \label{lem-Ilsc}
        The function $I$ is well defined on $\M_1(\C)$ and it possesses 
        compact level sets.
\end{lemma}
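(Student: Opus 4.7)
The plan is to express $I$ in the symmetrized single-kernel form
$$2I(\mu)=\iint f(z,w)\,d\mu(z)\,d\mu(w),\qquad f(z,w):=\log|1-z|+\log|1-w|-\log|z-w|,$$
and to use Lemma \ref{obr2} to obtain uniform-in-$\p$ control of $f$. The key inequality $|z-w|\le|1-z|+|1-w|\le 2\max(|1-z|,|1-w|)$ gives, upon taking logs, $f(z,w)\ge\min(\log|1-z|,\log|1-w|)-\log 2$, and hence $f_-\le(\log|1-z|)_-+(\log|1-w|)_-+\log 2$. Lemma \ref{obr2} then bounds $\iint f_-\,d\mu\times d\mu$ by a universal constant $C_0$ for every $\mu\in\p$, so $I$ takes values in $(-C_0,\infty]$ on $\p$; together with its extension to $+\infty$ off $\p$, this makes $I$ well defined on $\mm(\C)$.

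For lower semicontinuity on $\mm(\C)$, the case $\mu\notin\p$ is trivial since $\p$ is closed as a closure, so $I\equiv\infty$ on the open neighborhood $\p^c$ of $\mu$. For $\mu\in\p$, I would take $\mu_n\to\mu$ weakly with $\sup_n I(\mu_n)<\infty$, so each $\mu_n\in\p$. Defining the bounded continuous truncation $f_{M,K}:=\max(\min(f,M),-K)$, weak convergence and $f_{M,K}\le f$ give $\iint f_{M,K}\,d\mu=\lim_n\iint f_{M,K}\,d\mu_n\le\liminf_n 2I(\mu_n)$. Letting $K\to\infty$, the contributions from $\{f<-K\}$ vanish uniformly in $\mu_n\in\p$ by the negative-part bound, so one can strip the lower truncation; monotone convergence in $M\to\infty$ then recovers $2I(\mu)\le\liminf_n 2I(\mu_n)$.

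For compactness of level sets, I need tightness. When $|z|,|w|\ge 2$ one has $|1-z|\ge|z|/2$, $|1-w|\ge|w|/2$, and $|z-w|\le 2\max(|z|,|w|)$, which combine to give $f(z,w)\ge\log\min(|z|,|w|)-\log 8$. Splitting the double integral at $\{|z|\wedge|w|\ge R\}$ and bounding the complementary region via the universal negative-part estimate from the first paragraph yields $2I(\mu)\ge(\log R-\log 8)\,\mu(\{|z|\ge R\})^2-C_1$, so $\mu(\{|z|\ge R\})\to 0$ uniformly on $\{I\le M\}$. Tightness combined with lower semicontinuity delivers compact level sets.

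The main obstacle is the $K\to\infty$ step in the lsc argument: $f_-$ is not a bounded continuous functional of the measure, and weak convergence by itself does not control its integral. What rescues the argument is the uniform-in-$\p$ tail bound of Lemma \ref{obr2}, which permits interchanging the $n\to\infty$ and $K\to\infty$ limits; this uniform control of the logarithmic singularity at $z=1$ plays the role of the confinement potential in classical random-matrix LDPs and is the single nontrivial analytic input to the entire lemma.
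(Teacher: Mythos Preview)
Your approach is sound and is in spirit the same as the paper's: both routes reduce everything to the uniform control of the negative part of $f$ furnished by Lemma~\ref{obr2}. There is, however, one misstated step. You write that ``$f_{M,K}\le f$'' and deduce directly $\iint f_{M,K}\,d\mu_n\le 2I(\mu_n)$. This inequality is false on the set $\{f<-K\}$, where $f_{M,K}=-K>f$. What one actually has is
\[
\iint f_{M,K}\,d\mu_n\le 2I(\mu_n)+\iint_{\{f<-K\}}(-K-f)\,d\mu_n\otimes d\mu_n\le 2I(\mu_n)+\varepsilon_K,
\]
with $\varepsilon_K\to 0$ uniformly over $\p$ by your own $f_-$ bound (since $\{f<-K\}\subset\{\log|1-z|\le -K'\}\cup\{\log|1-w|\le -K'\}$ for $K'\sim K/2$ and Lemma~\ref{obr2} controls both the measure and the integral there). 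Your final paragraph already identifies exactly this interchange of limits as the crux, so the slip is in the write-up, not in the idea. One minor further point: your $f_{M,K}$ fails to be continuous at the single point $(1,1)$; this is harmless since $\mu(\{1\})=0$ for $\mu\in\p$, but the paper sidesteps it by truncating each logarithm separately.

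Where your argument genuinely improves on the paper's is the tightness step. The paper establishes precompactness of level sets via a somewhat laborious decomposition into four regions $A_1,\dots,A_4$ near $z=1$ or $w=1$, in order to obtain a uniform lower bound on $\iint_A f$. You bypass all of this: the single inequality $f(z,w)\ge \min(\log|1-z|,\log|1-w|)-\log 2$ gives a \emph{global} uniform bound $\iint f_-\,d\mu\otimes d\mu\le C_0$ on $\p$, and combining it with the elementary estimate $f(z,w)\ge\log\min(|z|,|w|)-\log 8$ on $\{|z|,|w|\ge 2\}$ yields tightness in one line. This is cleaner than the paper's route and uses nothing beyond Lemma~\ref{obr2}.
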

Lemma \ref{lem-Ilsc} almost shows that $I$ is a good rate function; what 
is missing is a proof that $I(\mu)\geq 0$ for $\mu\in \M_1(\C)$. This fact
is a consequence of Lemma \ref{lem-LB} below.
\begin{proof}
Define $f(z,w)=\log|1-z|+\log|1-w|-\log|z-w|$.
We first show that one can choose a function $K(L)\to_{L\to\infty} \infty$ so 
that the following inclusion holds for all $L$ large:
\begin{equation} \label{gr1}\{(z,w): |z|>L,|w|>L\}
        \subset \{ f(z,w) \ge K(L) \}. \end{equation}
Indeed,
setting $z'=1-z$ and $w'=1-w$, we get 
\[f(z,w)=\log|z'|+\log|w'| - \log|z'-w'|.\] 
But 
\begin{equation}
	\label{eq-prague}
	\frac{|z'w'|}{|z'-w'|}\ge \frac{1}{\frac{1}{|z'|}+\frac{1}{|w'|}} 
\ge \frac{1}{2} \min \{ |z'|,|w'| \}. 
\end{equation}
Clearly, this implies (\ref{gr1}).  
Further, the last inequality also implies that, with
$A=\{(z,w)\in \C^2: |1-z|>1/4, |1-w|>1/4\}^\complement$,
we have
\begin{equation}
        \label{eq-LBa}
        \inf_{A^\complement} f(z,w) \geq -\log 8\,.
\end{equation}

We next show that $I(\mu)$ is well defined. For that it is
enough to consider $\mu\in \p$. Since $f(z,w)\geq c+\log \min(|z-1|,|w-1|)$ for some constant $c$, an application of 
Lemma 
\ref{obr2} implies that the integral of $f$ is well defined (and bounded below).

We next show that the level sets of $I$ are precompact. Choose $L$ 
large enough so
that $K(L)>1$.
Then,
\begin{eqnarray}
        \label{eq-multipg6}
\mu(|z|>L)^2&= &
\mu \otimes \mu (|z|>L,|w|>L)\\
&
\le& \mu \l( \{f(z,w)\ge K(L) \} \cap \{|1-z|>1/4,|1-w|>1/4\}\r)
\nonumber\\
&\le&  \frac{1}{K(L)+\log 8} \iint_{A^\complement} 
(f(z,w)+\log 8) d\mu(z)d\mu(w)
\nonumber\\
&\le & 
\frac{1}{K(L)+\log 8} \l(\l(\iint f(z,w) d\mu(z) d\mu(w) - 
\iint_A f(z,w) d\mu(z) d\mu(w)  \r)+\log 8\r)\,,
\nonumber
\end{eqnarray}
where we used \eqref{eq-LBa} in the second inequality.

Our next task is to show that 
\begin{equation}
\label{eq-slinepg6}
-\iint_A f(z,w) d\mu(z) d\mu(w) \le c\,,
\end{equation}
for some constant $c$ independent of $\mu\in \p$. 
To this end, we write $A=A_1\cup A_2\cup A_3\cup A_4$ with
\begin{eqnarray*}
        &&
        A_1:=\{|1-z|\le 1/2, |1-w|\le 1/4\},\; 
        A_2:=\{|1-z|>1/2,|1-w|\le 1/4\},\\
        &&A_3:=\{|1-w|\in [1/4,1/2], |1-z|\leq 1/4\},\;
        A_4:=\{|1-w|>1/2, |1-z|\leq 1/4\} .
\end{eqnarray*}
Since $|z-w|\leq 3/4$ for $(z,w)\in A_1$, we have
\begin{eqnarray}
        \label{eq-A1}
        -\iint_{A_1}f(z,w)d\mu(z)d\mu(w)&\leq& 
\iint_{A_1} \log |z-w| d\mu(z)d\mu(w) - 2
\int_{\{z: |1-z|\leq 1/2\}} 
\log|1-z|d\mu(z)\nonumber\\
&\leq & 
- 2
\int_{\{z: |1-z|\leq 1/2\}} 
\log|1-z|d\mu(z)\leq 2C(\log 2)\,,
\end{eqnarray}
where $C(\log 2)$ is given by Lemma \ref{obr2}.
With the same argument, we also have 
\begin{equation}
\label{eq-A3}
-\iint_{A_3}f(z,w)d\mu(z)d\mu(w)\leq  2C(\log 2).
\end{equation}
%

For the integral over the set $A_2$, we note that $|1-(1-w)/(1-z)|\in (1/2,3/2)$
for $(z,w)\in A_2$, and therefore
\begin{eqnarray}
        \label{eq-A2}
&&      -\iint_{A_2}f(z,w)d\mu(z)d\mu(w)=-\iint_{A_2} 
\log\frac{|1-w|}{|1-\frac{1-w}{1-z}|}d\mu(z)d\mu(w)\\
&\leq&
\log(3/2)-\int_{\{w:|1-w|\leq 1/4\}} \log|1-w|d\mu(w)\leq
\log(3/2)+C(\log 4)\,,
\nonumber
\end{eqnarray}
where
$C(\log 4)$ is again given by Lemma \ref{obr2}. 

Since $\iint_{A_4}f(z,w)d\mu(z)d\mu(w)=
\iint_{A_2}f(z,w)d\mu(z)d\mu(w)$, 
we obtain by combining \eqref{eq-A1}, \eqref{eq-A3}  and \eqref{eq-A2}
that
\eqref{eq-slinepg6} holds.

From \eqref{eq-multipg6} and  \eqref{eq-slinepg6} we obtain that for any $M>0$,
$$\sup_{\{\mu: I(\mu)\leq M\}} \mu(|z|>L)\to_{L\to\infty} 0\,,$$
which yields the pre-compactness of the level sets of $I$ 
by an application of Prohorov's criterion.


It remains  to show that $I$ is lower semicontinuous. Since $\p$ is closed
in $\M_1(\C)$, it is enough to check the lower
semicontinuity in $\p$. 
%
Toward this end, for $\epsilon,M>0$ define
\[f^{\eps,M}(z,w)= \l[ \l(\log|1-z| \vee (-\frac{1}{\eps}) \r) + \l(\log|1-w| \vee (-\frac{1}{\eps})\r) -\l( \log|z-w| \vee (-M) \r) \r] \wedge M\]
and
\[f^{\eps}(z,w)=\log|1-z| \vee (-\frac{1}{\eps}) +\log|1-w| \vee (-\frac{1}{\eps}) -\log|z-w|.\]
Set \[I^{\eps,M}:= \frac{1}{2} \iint f^{\eps,M}(z,w) d\mu(z) d\mu(w) \] 
and \[I^{\eps}:= \frac{1}{2}\iint f^{\eps}(z,w) d\mu(z) d\mu(w).\] 
Note that by monotone convergence,
$I^{\epsilon}=\sup_{M>0} I^{\epsilon,M}$, and since $I^{\epsilon,M}:\M_1(\C)
\to \R$ is continuous, we have that $I^\epsilon$ is lower semicontinuous on
$\M_1(\C)$, and therefore on $\p$. On the other hand, 
$I^\epsilon$ converges uniformly to $I$ on $\p$ by Lemma \ref{obr2}. 
It follows that $I$ is also lower semicontinuous on $\p$, completing the proof
of the lemma.
\end{proof}

\subsection{Exponential Tightness of $\{\P_n\}$.}
\label{etight}
We prove in this subsection the exponential tightness of the family $\{\P_n\}$.
\begin{lemma}
        \label{exp-tight}
        The family 
$\{\P_n\}$ is exponentially tight. That is, with $T>0$ 
there exist compact sets $K_T\subset
\p$ so that
$$\limsup_{n\to\infty}\frac{1}{n^2}\log P(L_n\in K_T^\complement)
\leq -T.$$
\end{lemma}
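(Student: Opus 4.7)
The plan is to produce compact sets $K_T \subset \p$ by imposing a tightness condition at infinity, to reduce the corresponding tail probability to a small-zero tail probability via the reversal symmetry of the polynomial, and to estimate the latter at speed $n^2$ using the joint density \eqref{dist2}.

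\emph{Step 1 (Reversal symmetry).} The reciprocal polynomial $z^n P_n(1/z) = \sum_{j=0}^n \xi_{n-j}\,z^j$ has the same distribution as $P_n$, since the sequences $(\xi_{n-j})$ and $(\xi_j)$ are equidistributed. Its zeros are $\{1/z_i\}$, so $L_n$ and its pushforward under $z \mapsto 1/z$ have the same law. In particular,
\[
P\big(L_n(\{|z|>L\}) > \epsilon\big) \;=\; P\big(L_n(B(0,1/L)) > \epsilon\big)
\]
for all $\epsilon, L > 0$, and it suffices to bound the right-hand side.

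\emph{Step 2 (Density-based small-zero tail bound).} I plan to show that for each $\epsilon,T>0$ there is $L_0 = L_0(\epsilon,T)$, independent of $n$, such that
\[
\limsup_{n\to\infty}\frac{1}{n^2}\log P\big(L_n(B(0,1/L_0)) > \epsilon\big) \;\leq\; -T.
\]
The proof will use the density \eqref{dist2} together with \eqref{eq-znk}, which absorbs the normalization $\mathcal{Z}_{n,k}$ at speed $n^2$. On the event $\{L_n(B(0,1/L))>\epsilon\}$ some index set $S$ of size $m \geq \epsilon n$ has $|z_i| < 1/L$ for $i\in S$, so $|z_i - z_j| \leq 2/L$ for $i,j \in S$, giving the key super-exponential gain
\[
\prod_{\substack{i<j\\ i,j\in S}}|z_i - z_j| \;\leq\; (2/L)^{m(m-1)/2} \;\leq\; L^{-c(\epsilon)n^2}
\]
for some $c(\epsilon)>0$ and $L$ large enough. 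The remaining factors in the density, namely the cross product $\prod_{i\in S,\,j\notin S}|z_i-z_j|$, the bulk product $\prod_{\substack{i<j \\ i,j\notin S}}|z_i-z_j|$, and the denominator $\prod_j|1-z_j|^{n+1}$, will be controlled by crude pointwise inequalities (such as $|z_i-z_j| \leq |z_i|+|z_j|$ and $|1-z_j| \geq |z_j|/2$ when $|z_j|\geq 2$), by the Obrechkoff-type estimate of Lemma \ref{obr2} near the singularity at $z=1$, and by integrating out the non-outlying variables, the whole contribution absorbed via \eqref{eq-znk} into a factor of size $e^{o(n^2)}$. Choosing $L_0$ so that $c(\epsilon)\log L_0 > T$ completes the estimate.

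\emph{Step 3 (Assembly of $K_T$).} Given $T$, apply Step 2 at scales $\epsilon = 1/k$ and select $L_k = L_k(T)$ so that $P\big(L_n(B(0,1/L_k))>1/k\big) \leq e^{-(T+k)n^2(1+o(1))}$. Define
\[
K_T \;:=\; \p \;\cap\; \bigcap_{k\geq 1}\big\{\mu\in \M_1(\C):\, \mu(\{|z|>L_k\})\leq 1/k\big\}.
\]
The tightness conditions (with $L_k\to\infty$) define a relatively compact subset of $\M_1(\C)$ by Prokhorov's criterion, and closedness of $\p$ (via Theorem \ref{Ere}) makes $K_T$ compact. A union bound over $k$, using Step 1 to convert each tail into a small-zero tail and then applying Step 2, yields $\limsup_n\frac{1}{n^2}\log P(L_n\notin K_T) \leq -T$.

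The main obstacle will be the bookkeeping in Step 2, specifically ensuring that the integration over the non-outlying variables $\{z_j:j\notin S\}$, combined with the near-singularities of $|1-z_j|^{-(n+1)}$ and the cross terms $|z_i-z_j|$ for $i\in S$, $j\notin S$, produces only $e^{o(n^2)}$ and does not erode the $L^{-c(\epsilon)n^2}$ gain. The most delicate sub-region is where $|z_j|$ is moderate (say $|z_j|\in[1/L,2]$) and close to $z=1$, and this should be controlled by applying Lemma \ref{obr2} to the empirical measure of the non-outlying zeros together with the sub-exponential behavior of $\mathcal{Z}_{n,k}$ in \eqref{eq-znk}.
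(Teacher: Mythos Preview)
Your strategy is quite different from the paper's, and as you yourself flag, the heart of it---Step~2---is only a sketch whose central claim is not correct as stated. After extracting the partial Vandermonde $\prod_{i<j\in S}|z_i-z_j|\le(2/L)^{\binom{m}{2}}$, you assert that the remaining integral is $e^{o(n^2)}$ and is ``absorbed via \eqref{eq-znk}''. But \eqref{eq-znk} controls only the normalizers $\mathcal Z_{n,k}$, not the integral of your \emph{modified} density $\prod_{\text{not both in }S}|z_i-z_j|\cdot\prod_j|1-z_j|^{-(n+1)}\cdot{\bf 1}_{B_{n,k}}$, which is no longer a probability density: the denominator still carries power $n+1$ on each of the $n-m$ bulk variables while the bulk Vandermonde has only $\binom{n-m}{2}$ factors, and the constraint $B_{n,k}$ does not project cleanly to the $S^{\complement}$ coordinates. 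The tools you list (crude bounds $|z_i-z_j|\le|z_i|+|z_j|$, Lemma~\ref{obr2}) do not give $e^{o(n^2)}$; at best, after capping the near-$1$ singularity by the deterministic Obrechkoff bound $Y_n\le c$ (see \eqref{eq-Ynnew}), one might hope for $e^{Cn^2}$ with $C$ independent of $L$, which would still suffice once $c(\eps)\log L>C+T$. But that computation---handling the cross terms, the bulk Vandermonde, integrability at infinity, and the union over $S$---is precisely the ``bookkeeping'' you defer, and it is the whole proof.

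The paper avoids this integration problem entirely by a different mechanism. Instead of compacts defined by tail mass at infinity, it takes $K_B=\{\mu\in\p: J(\mu)\le 5B\}$ for $J(\mu)=\iint g\,d\mu\,d\mu$ with $g(z,w)=\log|1-z|+\log|1-w|-\log_+|z-w|$, and shows compactness of these level sets by the same estimates used for $I$. The key new input is the \emph{coefficient identity}
\[
\sum_{j=1}^n\log|1-z_j|\;=\;\log\frac{P_n(1)}{\xi_n}\;=\;\log\frac{\xi_0+\cdots+\xi_n}{\xi_n},
\]
which lets one bound the tail of $X_n:=n^{-2}\sum_j\log|1-z_j|$ \emph{directly from the law of the exponentials} (Lemma~\ref{lem-xn}), without ever integrating a modified zero density. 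On $\{|X_n|\le B\}\cap\{J(L_n)>5B\}$ the exponent in \eqref{dist2} is then bounded pointwise by $-Bn^2/2$, leaving only $\prod_j|1-z_j|^{-3}$ to integrate, and the deterministic bound $Y_n\le c$ makes that product integrable at size $e^{O(n)}$. This coefficient-side control of $\sum_j\log|1-z_j|$ is the idea your plan is missing.
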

\begin{proof}
        Introduce the function
        $g(z,w)=\log |1-z|+\log |1-w|-\log_+(|z-w|)$,
        and define the function $J$ on $\p$ by
        $$J(\mu)=\iint g(z,w)d\mu(z)d\mu(w)\,.$$
        We first claim that the sets
	$$ K_B:=\{\mu\in\p: J(\mu)\leq 5B\}$$
        are compact in $\M_1(\C)$ for $B$ large.
	The proof is very similar to the argument in 
	subsection \ref{goodrate}, and therefore we only sketch it.
	First, using \eqref{eq-prague} if $|z-w|>1$ and a direct computation
	otherwise, one gets that 
	\eqref{gr1} remains true (with a different choice of 
	$K(L)\to_{L\to\infty}\infty$) if $f$ is replaced by $g$.
	One also gets \eqref{eq-LBa} for $g$ and the same $A$ 
	with $8$ replaced by $16$. One then easily sees,
	using Lemma \ref{obr2},
	that $J(\mu)$ is 
	well defined. Next, arguing as in \eqref{eq-multipg6} with 
	the same substitutions of $g$ for $f$ and change of constants,
	one reduces the proof of pre-compactness of the level sets of
	$J$ to the proof of \eqref{eq-slinepg6}; for the latter,
	one splits the integral over  the sets $A_i$, $i=1,\ldots,4$,
	and notes that the argument given for $A_1$ and $A_3$ still applies
	when replacing $f$ by $g$, while on $A_2$ and $A_4$ one has that 
	$z-w\geq 1/2$ and therefore $f(z,w)\leq g(z,w)+C$ for an appropriate 
	constant $C$. This allows one to repeat the argument given for $f$ and conclude \eqref{eq-slinepg6} for $g$. Finally, the lower semicontinuity
	for $J$ follows in the same way as for $I$. This completes the
	proof of compactness of the sets $K_B$.

We need thus to estimate $P(L_n\in K_B^\complement)$.
Introduce the random variables
\[X_n= \frac{1}{n^2}\sum_{i=1}^{n} \log|1-z_i|=
\frac{1}{n^2}\log \frac{P_n(1)}{\xi_n}=\frac{1}{n^2} \log \frac{\xi_0+
\cdots+\xi_n}{\xi_n}\] 
and
\[Y_n=\frac{1}{n}\sum_{\{i:|1-z_i|<1\}} |\log|1-z_i||\,.\]
We need the following estimate, whose proof is postponed to the end of 
the subsection.
\begin{lemma}
        \label{lem-xn}
        There exists a constant $c>0$ such that
        for all $n$ large,
        \begin{equation}
                \label{eq-Xnnew}
                \P_n(|X_n|>B)\leq 20ne^{-Bn^2}
        \end{equation}
        and
        \begin{equation}
                \label{eq-Ynnew}
                \P_n(Y_n>c)=0\,.
        \end{equation}
\end{lemma}

Continuing with the proof of Lemma \ref{exp-tight},
we have
\begin{eqnarray}
        \label{eq-111413b}
        \nonumber
        \P_n(L_n \in K_B^\complement) &\le& 
        \P_n \l(\{L_n \in K_B^\complement\} 
\cap \{|X_n|\le B\}\r) + 
\P_n \l( \{|X_n|> B\}\r)\\
&\leq& \sum_{k=0}^{\lfloor n/2 \rfloor}
        \P_n \l(\{L_n \in K_B^\complement\} 
        \cap \{|X_n|\le B\}\cap A_{n,k}\r) + 20ne^{-Bn^2}\,, 
\end{eqnarray}
see \eqref{dist2} for the definition of $A_{n,k}$.

We next consider the density of $\P_n$ on $A_{n,k}$, see \eqref{dist2}, 
which we
write as
\begin{eqnarray*}
         &&f_{k,n}(z_1,\cdots,z_N)=\\
         &&
\frac{1}{{\mathcal Z}_{n,k}}
\exp\l(\frac{n^2}{2}( \frac{1}{n^2} \sum_{i\ne j} \log |z_i - z_j| - 
\frac{2}{n}\sum_i \log |1-z_i|  
+\frac{4}{n^2} \sum_i \log|1-z_i|) \r) \\
&& \times \exp\l(-3\sum_{i} \log |1-z_i|\r) 
{\bf 1}_{B_{n,k}}(z_1,\ldots,z_n).
\end{eqnarray*}
Note that 
\[\frac{1}{n^2}\sum_{i\ne j} \log |z_i - z_j|\le \frac{1}{n^2}\sum_{i\ne j} (\log_+ |z_i - z_j|)  =  \iint \log_+ |z-w| dL_n(z) dL_n(w) .\]
Thus, on  
the event 
        $\{L_n \in K_B^\complement\} 
        \cap \{|X_n|\le B\}\cap A_{n,k}$,
        we have that
$$       \frac{1}{n^2} \sum_{i\ne j} \log |z_i - z_j| - 
\frac{2}{n}\sum_i \log |1-z_i|+
\frac{4}{n^2} \sum_i \log|1-z_i|\leq -5B+4B=-B$$
and therefore on this event,
         $$f_{k,n}(z_1,\cdots,z_N)
         \leq \frac{1}{{\mathcal Z}_{n,k}} e^{-n^2B/2}
         \exp\l(-3\sum_{i} \log |1-z_i|\r){\bf 1}_{B_{n,k}}(z_1,\ldots,z_n).$$ 
 Thus, using \eqref{eq-Ynnew} and the constant $c$ in the statement of 
 the lemma,
 \begin{eqnarray*}
        && \P_n \l(\{L_n \in K_B^\complement\} 
        \cap \{|X_n|\le B\}\cap A_{n,k}\r)\\
        & \leq &
        \frac{1}{{\mathcal Z}_{n,k}} e^{-n^2B/2}
         \int \cdots \int \left(
         \left[\prod_{i=0}^n\frac{1}{|1-z_i|^{3}}\right]
         \wedge e^{3cn}
         \right)
                 d\el(z_1)\cdots d\el(z_k)
d\ell(z_{2k+1})\cdots d\ell(z_n).
\end{eqnarray*}
Lemma \ref{exp-tight} follows from substituting 
the last display in \eqref{eq-111413b} and performing the
integration.
\end{proof}
\begin{proof}[Proof of Lemma \ref{lem-xn}]
        By the argument in the proof of
        Lemma \ref{obr2}, we have that for any $j$ non-negative integer, 
        $$\frac1n \sum_{z_i: |1-z_i|\in [2^{-{j+1}},2^{-j}]}
        |\log|1-z_i||\leq j\cdot 2^{-j}\,.$$
        Thus, 
        \begin{equation}
                \label{eq-Ynnew1}
                Y_n=    \frac1n \sum_{z_i:|1-z_i|\leq 1} |\log |1-z_i||
        \leq \sum_{j=0}^\infty j2^{-j}\,.
\end{equation}
        In particular, for all $n$ large, 
        \begin{equation}
                \label{eq-111413a}
                \P_n(X_n\leq -1)=0\,.
        \end{equation}
        Next, we control the upper tail of $X_n$.
        We have
        \begin{eqnarray}
                \label{eq-11}
                P(X_n>B)&=&P(\sum_{i=0}^{n-1}\xi_i>(e^{Bn^2}-1)\xi_n)
                        \leq P(\sum_{i=1}^{n-1}
                        \xi_i>\frac12 e^{Bn^2}\xi_n)\nonumber\\
                        & =&
                        \int_0^\infty e^{-x}
                        P(
                        \sum_{i=1}^{n-1}
                        \xi_i>\frac12 e^{Bn^2}x)dx\,.
                \end{eqnarray}
                Using Chebycheff's inequality, we have
                        $$P(\sum_{i=1}^{n-1}
                        \xi_i>\frac12 e^{Bn^2}x)\leq 
                        e^{-\lambda e^{Bn^2}x/2}
                        \l[E(e^{\lambda \xi_1})\r]^{n-1}
                        \leq
                        \frac{e^{-\lambda e^{Bn^2}x/2}}{(1-\lambda)^n}\,.
                        $$
                        Choosing $\lambda=1/n$ and substituting 
                        in \eqref{eq-11} gives
                $$P(X_n>B)\leq
                 e\cdot \int_0^\infty e^{-x(1+e^{Bn^2}/2n)} dx 
                \leq 4e\cdot ne^{-Bn^2}\,.$$
                Combining the last display with
                \eqref{eq-111413a} completes the proof.
\end{proof}

\subsection{The Upper Bound}
\label{ubound}
Recall the notation 
$f(z,w)=\log |1-z| + \log |1-w| - \log|z-w| $.
We prove in this subsection the following.
\begin{lemma}
        \label{lem-UB}
        For any $\mu\in\p$,
 \begin{equation}
  \label{upb}
   \lim_{\eps \to 0} \varlimsup_{n \to \infty} \frac{1}{n^2} \log\, \P_n \l(  d(L_n,\mu) \le \eps \r) \le - \frac{1}{2} \iint f(z,w)d\mu(z)d\mu(w)\,.
 \end{equation}
 \end{lemma}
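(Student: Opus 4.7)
\textbf{Proof plan for Lemma \ref{lem-UB}.}
The plan is to exploit the explicit density \eqref{dist2}, together with the partition-function estimate \eqref{eq-znk}, to convert the probability into an integral and bound the integrand pointwise on $\{d(L_n,\mu)\le\eps\}$; standard volume bounds then close the argument. The starting point is the algebraic identity
\[
S:=\sum_{i<j}\log|z_i-z_j|-(n+1)\sum_j\log|1-z_j|=-\tfrac{1}{2}\sum_{i\ne j}f(z_i,z_j)-2\sum_j\log|1-z_j|,
\]
which exposes $f$, and hence the target $I(\mu)=\tfrac{1}{2}\iint f\,d\mu(z)d\mu(w)$, directly.

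The secondary term $-2\sum_j\log|1-z_j|=-2n\int\log|1-z|dL_n$ is only $O(n)$, hence negligible: the uniform integrability of $\log|1-z|$ on $\nu\in\p$ provided by Lemma \ref{obr2} makes $\nu\mapsto\int\log|1-z|d\nu$ continuous on $\p$ at $\mu$, so on $\{d(L_n,\mu)\le\eps\}$ one has $\int\log|1-z|dL_n\to\int\log|1-z|d\mu$ as $\eps\to 0$.

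The main task is to show $\frac{1}{n^2}\sum_{i\ne j}f(z_i,z_j)\ge 2I(\mu)-\alpha(\eps)$ for some $\alpha(\eps)\downarrow 0$. The idea is to truncate only the $-\log|z-w|$ piece of $f$ from above at $M$: using $-\log|z-w|\ge\min(-\log|z-w|,M)$ and separating the two sums yields
\[
\frac{1}{n^2}\sum_{i\ne j}f(z_i,z_j)\ge\frac{2(n-1)}{n}\int\log|1-z|dL_n+\iint_{z\ne w}\min(-\log|z-w|,M)\,dL_n(z)dL_n(w).
\]
The first term tends to $2\int\log|1-z|d\mu$ by the continuity just mentioned. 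For the second, $\min(-\log|z-w|,M)$ is continuous and bounded above by $M$; restricting (at negligible probability cost, by the exponential tightness of Lemma \ref{exp-tight}) to a weakly compact tight set $K\subset\p$, one passes to the weak limit
\[
\iint_{z\ne w}\min(-\log|z-w|,M)dL_n(z)dL_n(w)\;\longrightarrow\;\iint\min(-\log|z-w|,M)d\mu(z)d\mu(w),
\]
after which monotone convergence as $M\to\infty$ produces $-\iint\log|z-w|d\mu(z)d\mu(w)$. Adding the two limits gives the required lower bound $2I(\mu)-\alpha(\eps)$.

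Pointwise this yields $\exp(S)\le\exp(-n^2I(\mu)+o_\eps(n^2))$ on $\{d(L_n,\mu)\le\eps\}\cap\{L_n\in K\}$; integration over $A_{n,k}\cap B(\mu,\eps)$ contributes only $e^{o(n^2)}$ in volume (by tightness of $K$), summation over the $O(n)$ values of $k$ is harmless, and $\mathcal{Z}_{n,k}^{-1}=e^{o(n^2)}$ from \eqref{eq-znk} absorbs the remaining factor to complete the proof. The principal obstacle is the interplay between the two singularities of $f$—the $-\infty$ at $z=1$, controlled by the $\p$-specific Obrechkoff estimate of Lemma \ref{obr2}, and the $+\infty$ on $\{z=w\}$, handled by the truncation $\min(-\log|z-w|,M)$. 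The most delicate step is that this truncation is still unbounded below as $|z-w|\to\infty$, so the weak convergence of its integrals against $L_n\otimes L_n$ requires uniform tail bounds on $L_n$, supplied precisely by the exponential tightness of $\{\P_n\}$ in Lemma \ref{exp-tight}.
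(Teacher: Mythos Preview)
Your overall strategy parallels the paper's, but there is a genuine gap at the last step. After bounding the integrand $\exp(S)$ pointwise by $\exp(-n^2 I(\mu)+o_\eps(n^2))$, you assert that the remaining integral of $1$ against the reference measure $d\el(z_1)\cdots d\el(z_k)\,d\ell(z_{2k+1})\cdots d\ell(z_n)$ over $A_{n,k}\cap\{d(L_n,\mu)\le\eps\}\cap\{L_n\in K\}$ is $e^{o(n^2)}$. This is false: that integral is infinite. Tightness of the compact set $K\subset\M_1(\C)$ from Lemma \ref{exp-tight} guarantees only that $L_n(|z|>R)$ is uniformly small, not zero; a fraction $\eps n$ of the points may still lie outside any fixed ball, and since the reference measure is Lebesgue on those coordinates, the volume diverges.

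The paper avoids this by \emph{not} bounding all of $\exp(S)$ pointwise. Instead (see \eqref{e1}) it writes
\[
\exp(S)=\exp\bigl(\text{main term of order }n^2\bigr)\cdot\prod_{j=1}^n\frac{1}{|1-z_j|^{2+\eps}},
\]
bounds the first factor pointwise on the good event, and retains the second as a weight that decays at infinity: each factor $|1-z|^{-2-\eps}$ is integrable over $\{|1-z|\ge 1\}$ on both $\C$ and $\R$, yielding $e^{O(n)}$ for the product; the non-integrable singularity of this weight at $z=1$ is then capped via the deterministic estimate \eqref{eq-Ynnew} of Lemma \ref{lem-xn}. A related gap is your claim that Lemma \ref{obr2} makes $\nu\mapsto\int\log|1-z|\,d\nu$ continuous on $\p$: Obrechkoff controls only the singularity at $z=1$, not the growth at infinity, so $\int\log|1-z|\,dL_n$ need not be bounded on $\{d(L_n,\mu)\le\eps\}$. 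The paper handles this through the tail bound \eqref{eq-Xnnew} on $X_n=n^{-2}\sum_j\log|1-z_j|$ (the splitting via the event $E_n$ at the start of its proof), rather than by appealing to weak continuity.
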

 Here, $d(\cdot,\cdot)$ is an arbitrary metric on $\M_1(\C)$ which is 
 compatible with the weak topology, e.g. the L\'{e}vy metric.

 \begin{proof}
Define the set of measures   \[E_n:= \{\nu \in \p: 
\frac{1}{n} \int \log |1-z| d\nu(z) \ge \frac12\iint  f(z,w)
d\nu(z)d\nu(w)\}.\] The set $E_n$ corresponds to a subset of $\cup_{k=0}^{\lfloor n/2 \rfloor}A_{n,k}$ which gives rise to empirical measures $\nu$ as described in Section \ref{intro}. By abuse of notation, we denote this set by $E_n$ as well.

An application of \eqref{eq-Xnnew} of Lemma \ref{lem-xn} gives 
the following.
\begin{proposition}
With notation as above,
$\P_n(E_n) \le 20 n
\exp \l( -\frac{1}{2}n^2 \int \int f(z,w) d\mu(z)d\mu(w)\r)$.
\end{proposition}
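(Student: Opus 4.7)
The strategy is to translate the event $\{L_n\in E_n\}$ into a tail event for the random variable $X_n$ of Lemma~\ref{lem-xn}, so that the bound \eqref{eq-Xnnew} can be invoked directly.

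First, I observe that $\int \log|1-z|\,dL_n(z)=\frac{1}{n}\sum_{i=1}^{n}\log|1-z_i|=n X_n$, and hence the inequality defining $E_n$, applied to $\nu=L_n$, is equivalent to
\[X_n\ge \frac{1}{2}\iint f(z,w)\,dL_n(z)\,dL_n(w).\]
In the setting in which the proposition is meant to be used, namely within the proof of Lemma~\ref{lem-UB} on the event $\{d(L_n,\mu)\le\eps\}$, I would invoke the lower semicontinuity of the functional $\nu\mapsto\iint f\,d\nu\, d\nu$ on $\p$, established in the proof of Lemma~\ref{lem-Ilsc}, to conclude that on this event $\iint f\,dL_n\,dL_n \ge \iint f\,d\mu\,d\mu - o_\eps(1)$. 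Combined with the previous display, this gives
\[X_n \ge \frac{1}{2}\iint f(z,w)\,d\mu(z)\,d\mu(w) - o_\eps(1)\]
on $\{L_n\in E_n\}\cap\{d(L_n,\mu)\le\eps\}$.

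At this point a single application of \eqref{eq-Xnnew} with $B = \frac{1}{2}\iint f\,d\mu\,d\mu - o_\eps(1)$ yields $\P_n(L_n\in E_n)\le 20n\,\exp(-Bn^2)$. Sending $\eps\to 0$, an operation that is absorbed into the outer $\eps\to 0$ limit already present in the statement of Lemma~\ref{lem-UB}, then produces the stated inequality.

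The main obstacle is the proper treatment of the diagonal in $\iint f\,dL_n\,dL_n$, which taken literally is $+\infty$ because of the $-\log|z-w|$ singularity of $f$ on $z=w$. One must either interpret this quantity as the off-diagonal sum $\frac{1}{n^2}\sum_{i\ne j}f(z_i,z_j)$, which is the interpretation consistent with the joint density \eqref{dist2}, or work throughout with the truncation $f^{\eps,M}$ from Section~\ref{goodrate} and pass to the limit at the end, relying on Lemma~\ref{obr2} to control the contribution from the $\log|1-z|$ singularities uniformly on $\p$. Once this bookkeeping is carried out, the proposition is an immediate consequence of \eqref{eq-Xnnew}.
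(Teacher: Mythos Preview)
Your detour through the ball $\{d(L_n,\mu)\le\eps\}$ and lower semicontinuity is unnecessary, and it stems from reading the definition of $E_n$ too literally. As printed, the right-hand side of the defining inequality for $E_n$ is $\frac12\iint f\,d\nu\,d\nu$, but this is a typo: the intended right-hand side is the \emph{constant} $\frac12\iint f\,d\mu\,d\mu$. With that reading, membership $L_n\in E_n$ is \emph{exactly} the event $X_n\ge B$ with $B=\frac12\iint f\,d\mu\,d\mu$, and a single application of \eqref{eq-Xnnew} gives $\P_n(E_n)\le\P_n(|X_n|>B)\le 20n\,e^{-Bn^2}$, which is the stated bound. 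This is all the paper means by ``an application of \eqref{eq-Xnnew}''. The same reading is confirmed by how $E_n^\complement$ is used a few lines later: on $E_n^\complement$ one has $nX_n=\int\log|1-z|\,dL_n<\frac{n}{2}\iint f\,d\mu\,d\mu$, and this is precisely the inequality needed to absorb the stray $+\eps\sum_j\log|1-z_j|$ term coming from the split in \eqref{e1}.

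Beyond being unnecessary, your route has two genuine problems. First, it does not prove the proposition as stated: you only bound $\P_n(E_n\cap\{d(L_n,\mu)\le\eps\})$, not $\P_n(E_n)$, and the decomposition the paper uses really does isolate $\P_n(E_n)$ by itself. Second, the diagonal issue you flag is not mere bookkeeping for your argument. The lower semicontinuity established in Lemma~\ref{lem-Ilsc} is for the functional $\nu\mapsto\iint f\,d\nu\,d\nu$ with the diagonal included; applied to the atomic measure $L_n$ it yields $+\infty\ge I(\mu)-o_\eps(1)$, which is vacuous. If instead you interpret $\iint f\,dL_n\,dL_n$ as the off-diagonal sum, you are invoking lower semicontinuity for a \emph{different} functional than the one treated in Section~\ref{goodrate}, and that would require a separate proof. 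None of this arises once $E_n$ is read with $\mu$ on the right-hand side.
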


Now,
\[
\P_n \l(  d(L_n,\mu) \le \eps \r) \le \P_n \l( E_n^\complement
\cap \{ d(L_n,\mu) \le \eps \} \r) + \P_n(E_n).
\]

Therefore, 
\begin{align*}&\varlimsup_{n \to \infty} \frac{1}{n^2} \log \P_n \l(  d(L_n,\mu) \le \eps \r) \\
= & \text{Max} \l\{\varlimsup_{n \to \infty} \frac{1}{n^2} \log
\P_n (E_n),
\varlimsup_{n \to \infty} \frac{1}{n^2} \log \P_n \l( E_n^\complement
\cap  \{ d(L_n,\mu) 
\le \eps \} \r)\r\}.\end{align*} 
Since $\varlimsup_{n \to \infty} \frac{1}{n^2}\log \P_n (E_n)$ is 
bounded above by the desired upper bound, it remains to deal with 
$\varlimsup_{n \to \infty} \frac{1}{n^2} \P_n \l( E_n^\complement
\cap \{ d(L_n,\mu) \le \eps \} \r)$. 


We begin with 
\[ \P_n( E_n^\complement
        \cap \{ d(L_n,\mu)<\eps \} )  = \sum_{k=0}^{\lfloor n/2\rfloor}
        \frac{1}{ {\mathcal Z}_{n,k}}I_{k,n}^{\eps}, \] 
where \[ I_{k,n}^{\eps} = \int_{\{E_n^\complement
        \cap A_{n,k} \cap B_{n,k}
\cap \{d(W_n,\mu) \le \eps \} \}} \exp\l( \sum_{1\le i < j \le n} 
\mathrm{log} |z_i - z_j| - (n+1) \sum_{j=1}^n \mathrm{log}|1-z_j| \r) \]\[ 
d\el(z_1)\cdots d\el(z_k)d\ell(z_{2k+1})\cdots d\ell(z_n)\] where 
$W_n(z_1,\cdots,z_n)$ is the empirical measure $\frac{1}{n} \sum_{i=1}^n \del_{z_i}$.

We will upper bound $\varlimsup_{n \to \infty} \frac{1}{n^2} \log
I_{k,n}^{\eps}$ for each $0\le k \le \lfloor n/2\rfloor$, 
uniformly in $k$; by summing over $k$, 
this (together with \eqref{eq-znk})
will be sufficient for the overall upper bound on 
$\P_n\l(E_n^\complement \cap \{d(L_n,\mu) \le \eps \} \r)$.

For reasons similar to those encountered in the proof of exponential tightness,
we  write the integrand in  $I_{k,N}^{\eps}$ as 
\begin{equation} \label{e1} \exp\l( \sum_{1\le i < j \le n} 
        \mathrm{log} |z_i - z_j| - (n-1) \sum_{j=1}^n \mathrm{log}|1-z_j| + 
        \eps \sum_{j=1}^n \mathrm{log}|1-z_j|  \r) 
        \prod_{j=0}^n\frac{1}{|1-z_j|^{2+\eps}}. \end{equation}

Note that to upper bound the exponent in (\ref{e1}), it suffices to 
truncate $\log|z_i-z_j|$ from below and $\log|1-z_i|$ from above. 
To this end, we fix a big positive number $M$ and define the 
truncated function \[f_M(z,w)= f(z,w) \wedge M   .\]

The exponent in (\ref{e1}) is 
\begin{equation}
 \label{u1}
\mathcal{E}_n(z_1,\cdots,z_N) \le \frac{n^2}{2} \l( - \iint_{z \ne w} 
f_M(z,w) dL_n(z)dL_n(w) - \frac{2\eps}{n} \int \log|1-z|  dL_n(z)  \r)
\end{equation}
and  $\exp\l( \mathcal{E}_n(z_1,\cdots,z_n) \r)$ is 
integrated, for each fixed $k$, with respect to the measure 
\begin{equation} \label{bgm}\prod_{j=0}^n\frac{1}{|1-z_j|^{2+\eps}} 
        {\bf 1}_{B_{n,k}}(z_1,\ldots,z_n)
        d\el(z_1)\cdots d\el(z_k)d\ell(z_{2k+1})\cdots d\ell(z_n).\end{equation}
But \[ \iint_{z \ne w} f_M(z,w) dL_n(z)dL_n(w)= \iint 
f_M(z,w) dL_n(z)dL_n(w) - M/n\]
In the above equality, the $M/n$ term comes from the diagonal terms 
in the discrete sum $\iint f_M(z,w) dL_n(z) dL_n(w)$.

We handle (\ref{u1}) with the following proposition, whose proof is deferred
to the end of this section.
%
\begin{proposition}
\label{expon}
There exist $\del_M(\eps)>0$ and $c(M)>0$ 
such that for all $\nu \in \p$ such that $d(\nu,\mu)<\eps$ we have 
\[ \l|\iint f_M(z,w) d\nu(z)d\nu(w) - \iint f_M(z,w) d\mu(z)d\mu(w)\r| 
<\del_M(\eps) + c(M).\] where $\del_M(\eps) \to 0$ as $\eps \to 0$ for each fixed $M$ (bigger than some universal constant) and $c(M) \to 0$ uniformly in $\nu \in \p$ and $\eps$. 
\end{proposition}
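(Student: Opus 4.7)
The plan is to approximate $f_M$ by a bounded function for which weak convergence applies directly, and then bound the approximation error uniformly over $\nu\in\p$ using Lemma \ref{obr2}. To that end, introduce the doubly truncated function
\[ \bar f_{M,L}(z,w) := (f(z,w)\vee(-L))\wedge M, \qquad 0<L<M, \]
which takes values in $[-L,M]$. Away from the three singular loci $\{z=1\}$, $\{w=1\}$, $\{z=w\}$ the function $f$ is continuous; near $\{z=1\}$ (with $w\neq 1$) we have $f\to-\infty$ so $\bar f_{M,L}\to -L$, and near $\{z=w\}$ (with $z\neq 1$) we have $f\to+\infty$ so $\bar f_{M,L}\to M$. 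Thus the only genuine discontinuity of $\bar f_{M,L}$ is the point $(1,1)$. By Obrechkoff's theorem, $\mu(\{1\})\leq \mu(\R_+)=0$ for every $\mu\in\p$, so this discontinuity set has $\mu\otimes\mu$-measure zero. The Portmanteau theorem then yields that $\nu\mapsto\iint \bar f_{M,L}\,d\nu d\nu$ is continuous at $\mu$ in the weak topology, producing, for each fixed $M,L$, some $\delta_{M,L}(\eps)\to 0$ as $\eps\to 0$ such that $d(\nu,\mu)<\eps$ implies
\[ \l|\iint \bar f_{M,L}\,d\nu d\nu - \iint \bar f_{M,L}\,d\mu d\mu\r|<\delta_{M,L}(\eps). \]

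Next I would control $\iint(\bar f_{M,L}-f_M)\,d\nu d\nu$ uniformly in $\nu\in\p$. A direct case check gives $\bar f_{M,L}-f_M = (-L-f)^+\mathbf{1}_{\{f<-L\}}\leq |f|\mathbf{1}_{\{f<-L\}}$. The key pointwise inequality, obtained from $|z-w|\leq|1-z|+|1-w|\leq 2\max(|1-z|,|1-w|)$, is
\[ f(z,w)\geq \min(\log|1-z|,\log|1-w|)-\log 2. \]
This shows both that $\{f<-L\}\subset\{\min(|1-z|,|1-w|)<2e^{-L}\}$ and that, on this set, assuming WLOG $|1-z|\leq|1-w|$, $|f|\leq -\log|1-z|+\log 2$. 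Integrating over $\{|1-z|<2e^{-L}\}$ (together with the symmetric estimate for the reverse ordering), Lemma \ref{obr2} applied to both $\nu(|1-z|<2e^{-L})$ and $\int_{\{|1-z|<2e^{-L}\}}|\log|1-z||\,d\nu$ yields
\[ \iint_{\{f<-L\}} |f|\,d\nu d\nu \leq c_1(L), \]
with $c_1(L)\to 0$ as $L\to\infty$, uniformly over $\nu\in\p$. Setting $L=M$ and using the triangle inequality gives the claim with $c(M):=2c_1(M)$ and $\delta_M(\eps):=\delta_{M,M}(\eps)$.

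The main technical obstacle is the uniform tail bound in the previous paragraph. A naive application of $|f|\leq|\log|1-z||+|\log|1-w||+|\log|z-w||$ fails because $\int |\log|1-w||\,d\nu$ need not be uniformly bounded over $\nu\in\p$ (such measures are not assumed to have light tails, and $|\log|1-w||$ grows with $|w|$). The inequality $f\geq \min(\log|1-z|,\log|1-w|)-\log 2$ rescues the argument by singling out only the smaller of $|1-z|$ and $|1-w|$, and that smaller value is forced to be exponentially tiny in $L$ on $\{f<-L\}$, precisely the regime in which Lemma \ref{obr2} provides uniform control.
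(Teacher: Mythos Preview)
Your proof is correct and follows essentially the same approach as the paper: your $\bar f_{M,M}$ coincides with the paper's $g_M=f_M\vee(-M)$, your key pointwise inequality $f(z,w)\geq\min(\log|1-z|,\log|1-w|)-\log 2$ is the paper's estimate rewritten (the paper sets $z'=1-z$, $w'=1-w$ and uses $|1-w'/z'|\leq 2$ on $\{|w'|\leq|z'|\}$), and both arguments finish via Lemma \ref{obr2}. You are in fact slightly more careful on one point: the paper asserts that $g_M$ is ``a bounded continuous function,'' whereas you correctly observe the discontinuity at $(1,1)$ and invoke the Portmanteau theorem together with $\mu(\{1\})=0$; the only cosmetic slip is writing $0<L<M$ and then setting $L=M$, which causes no harm.
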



Continuing with the proof of the upper bound,
we use Proposition
\ref{expon} in
(\ref{u1}) together with 
\eqref{eq-Ynnew} to write
\begin{eqnarray*}
        I_{k,n}^\epsilon&\leq& 
        C n \exp\l\{{4n^2(\del_M(\eps) +c(M)+ n^{-1}M)}\r\}  \\
        &&\times \exp\l\{ -\frac{n^2}{2} \l( \iint f(z,w) d\mu(z)d\mu(w) - 
\eps  \iint f(z,w) d\mu(z)d\mu(w) \r) \r\}\\
&&
\int \cdots \int \left(\prod_{i=0}^n\frac{1}{|1-z_i|^{2+\eps}}\wedge c^{(2+\epsilon)n}
         \right)
                 d\el(z_1)\cdots d\el(z_k)
d\ell(z_{2k+1})\cdots d\ell(z_n).
\end{eqnarray*}
The last integral is dominated by $e^{C(\epsilon)n}$ for appropriate 
$C(\epsilon)$.
Taking logarithm, dividing by $n^2$ and letting $n \to \infty$, $\eps \to 0$ and $M \to \infty$ (in that order) we get the desired upper bound (\ref{upb}).
\end{proof}

\begin{proof}[Proof of Proposition \ref{expon}]
The statement would follow immediately from the
definitions if $f_M$ was a bounded continuous functions. 
Although $f_M$ is not a bounded continuous function,  
$f_M$ is  clearly bounded above. We introduce
$g_M=f_M \vee (-M)$.   Note that switching with
$z'=1-z,w'=1-w$, we have $g_M(1-z',1-w')= (-M) \vee M 
\wedge (-\log|\frac{1}{z'}-\frac{1}{w'}|)$. 

Let $A_M$ be the set \[A_M:=\{(z,w): f_M(z,w) <-M \}.\] Clearly, \[\iint 
f_M(z,w) d\nu(z)d\nu(w) = \iint g_M(z,w) d\nu(z)d\nu(w) + 
\iint_{A_M} (f_M(z,w)+M)d\nu(z)d\nu(w) .\] 

We consider integration over the domain
 $|w'|\le |z'|$; by symmetry, the complementary domain can be handled similarly. Then on the set $A_M$, 
we have
$f_M(z,w) = -\log|1-\frac{w'}{z'}| + \log|w'|$. But since 
$|w'|\le |z'|$, we have $-\log|1-\frac{w'}{z'}|  \ge -\log 2$ and
therefore on the set $A_M$ we have \[\log |w'| \le -M +  \log 2\] and 
\begin{equation}
\label{control}
-\log 2 + \log|w'| \le f_M(z,w) \le -M.             \end{equation}
 Let $B_{M,\nu}$ be the event that for two i.i.d.  
 variables $(X,Y)$ sampled from $\nu$, the minimum satisfies 
 \[\log\l(\text{min}(|1-X|,|1-Y|)\r)\le -M +2.\] 
 Clearly, $A_M \subset B_{M, \nu}$. 
 From Lemma \ref{obr2}, we deduce that 
 \begin{equation} \label{bm1} \nu(B_{M,\nu})<c_1(M)\end{equation} 
 where $c_1(M)\to 0$ as $M\to \infty$ uniformly in $\nu$. 
 Furthermore, the same lemma implies that  
 \begin{equation}\label{bm2}\l| \int_{B_{M,\nu}} 
         \log \l(\text{min}(|1-z|,|1-w|)\r) d\nu(z) d\nu(w) \r|  
         <c_2(M)\end{equation} where $c_2(M)\to 0$ as $M\to \infty$
 uniformly in $\nu$. 

Combining (\ref{control}), (\ref{bm1}) and (\ref{bm2}) we get
\[ \iint_{A_M} (f_M(z,w)+M)d\nu(z)d\nu(w) =c_3(M,\nu) \] where 
$c_3(M,\nu)  \to 0$ as $M \to \infty$ uniformly in $\nu \in \p$. 
In other words, we have
\[ \l| \iint f_M(z,w) d\nu(z)d\nu(w) - \iint g_M(z,w) d\nu(z)d\nu(w) \r| \to 0 \] as $M\to\infty$, uniformly in $\nu \in \p$.

It remains to show that $\iint g_M d\nu\otimes d\nu \to \iint g_M d\mu 
\otimes d\mu$ as $\nu \to \mu$ for a fixed $M$. But this is true by definition since $g_M$ is a bounded continuous function.
\end{proof}

 \section{Convergence of potentials}
 \label{potential}
We equip  $\M_1(\C)$ with a metric $d$ compatible with the weak topology.
Recall that for $\mu\in \M_1(\C)$, $\LL_\mu$ denotes the logarithmic 
potential of $\mu$. The following theorem is standard - 
see e.g. \cite[Theorem 3.2.13]{Hor} for part (b). 
 \begin{theorem}
 \label{potconv}
Let $K\subset\subset \C$  be  compact set with non-empty interior.
Let $\h(K^{\c})$ denote the space of harmonic functions on 
$K^{\c}$, equipped with the topology of uniform convergence on compact sets. 
Further, let $f$ be a continuous function on $K$.  
  \begin{itemize}
  \item[(a)]  The map from $\mm(K) \to \h(K^{\c})$, given by  $\nu \mapsto \LL_\nu$, is continuous, and the same holds for the derivatives of $\LL_\nu$.  
  \item[(b)]  The map from $\mm(K) \to \R$, given by $\nu \mapsto \sup_{z\in K} \l(\LL_\nu(z) - f(z)\r) $, is upper semicontinuous. 
  \end{itemize}
 \end{theorem}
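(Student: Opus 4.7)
The plan for part (a) is to exploit that for any compact $K' \subset K^\c$, one has $K \cap K' = \emptyset$, so $(z,w) \mapsto \log|z-w|$ and all its $z$-derivatives are continuous---and hence uniformly continuous and bounded---on the compact product $K' \times K$. Harmonicity of $\LL_\nu$ on $K^\c$ then follows by differentiating under the integral, using that $\log|z-w|$ is harmonic in $z$ for $z \ne w$. For continuity of $\nu \mapsto \LL_\nu$, fix $z \in K'$: since $w \mapsto \log|z-w|$ is bounded continuous on $K$, weak convergence $\nu_n \to \nu$ gives $\LL_{\nu_n}(z) \to \LL_\nu(z)$ pointwise. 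To upgrade this to uniform convergence on $K'$, cover $K'$ by finitely many balls of radius $\delta$, with $\delta$ chosen from the uniform continuity above, and centers $z_1, \dots, z_N$; then for any $z \in K'$ within $\delta$ of some $z_j$,
\[
\bigl|\LL_{\nu_n}(z) - \LL_\nu(z)\bigr| \le 2\eps + \bigl|\LL_{\nu_n}(z_j) - \LL_\nu(z_j)\bigr|,
\]
where the second term vanishes as $n \to \infty$ for each of the finitely many $j$. The same argument applied to each partial derivative of $\log|z-w|$ in $z$ handles the derivatives.

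For part (b), the obstacle is that $\log|z-w|$ is singular on the diagonal of $K \times K$, so weak convergence does not apply directly. The plan is to truncate via $\phi_M(z,w) := \log|z-w| \vee (-M)$, which is bounded continuous on $K \times K$ and decreases monotonically to $\log|z-w|$ as $M \to \infty$. Since $\log|z-w|$ is bounded above on the compact set $K$, Fatou's lemma shows $\LL_\nu$ is upper semicontinuous on $\C$, so $\LL_\nu - f$ attains its supremum on the compact set $K$ at some point $z_\nu^* \in K$. Given $\nu_n \to \nu$, let $z_n^*$ attain $\sup_K(\LL_{\nu_n} - f)$, and extract a subsequence along which $z_n^* \to z^* \in K$.

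The bound $\LL_{\nu_n}(z_n^*) \le \int \phi_M(z_n^*, w)\, d\nu_n(w)$, combined with the finite-net argument from part (a) applied to the bounded continuous function $\phi_M$ on $K \times K$, yields
\[
\limsup_{n \to \infty} \LL_{\nu_n}(z_n^*) \le \int \phi_M(z^*, w)\, d\nu(w).
\]
Letting $M \to \infty$ and using monotone convergence on the right then gives $\limsup_n \LL_{\nu_n}(z_n^*) \le \LL_\nu(z^*)$. Combined with $f(z_n^*) \to f(z^*)$ by continuity, this produces
\[
\limsup_{n \to \infty} \sup_{z \in K} \bigl(\LL_{\nu_n}(z) - f(z)\bigr) \le \LL_\nu(z^*) - f(z^*) \le \sup_{z \in K} \bigl(\LL_\nu(z) - f(z)\bigr),
\]
which is the desired upper semicontinuity. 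The only real subtlety is in (b): controlling the diagonal singularity through the ordered double limit $n \to \infty$, then $M \to \infty$, and making sure the extraction of maximizers $z_n^* \to z^*$ is performed before the truncation parameter is released.
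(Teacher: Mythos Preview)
Your proof is correct. The paper does not actually give a proof of this theorem: it simply calls the result ``standard'' and points to \cite[Theorem 3.2.13]{Hor} for part (b). So there is nothing to compare against in detail---you have supplied the argument the authors chose to omit.

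One small technical remark on part (b): as written, you extract a subsequence along which $z_n^* \to z^*$, but the bound you derive then only controls $\limsup$ along that subsequence. To get upper semicontinuity for the full sequence, you should first pass to a subsequence along which $\sup_K(\LL_{\nu_n} - f)$ converges to its $\limsup$, and \emph{then} extract a further subsequence with $z_n^* \to z^*$. This is routine and almost certainly what you intended; I mention it only because the final display claims the bound for the full $\limsup_n$.
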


 \section{Approximation of measures}
 \label{meastop} 
Fix $\del>0$ and  let $K\subset\subset \C$ be defined by
\[K^{\c} = \{ z: |\mathrm{arg}(z)|<\del \} \cup 
\{ z: |z| < \del \} \cup \{ z: |z| > \frac{1}{\del} \}.\] 
We let 
  $\mm^{\mathrm{Sym}}(K)$ denote 
  the set of probability measures 
  supported on $K$ that are symmetric about the real line.  
%

  \begin{definition}
 Let $\wt{\mm}(K)\subset 
 \mm^{\mathrm{Sym}}(K)$
 denote the collection of symmetric probability measures
 $\mu$ such that 
	$\LL_\mu(z)<\LL_\mu(|z|)$ for $z\not\in \R_+$ and
 there exist real numbers
 $a=a(\mu),b=b(\mu)$ so that
 \begin{equation}
 \label{infinity}
  \LL_{\mu}(x+ i y) = \log |z| + b/x + O(|z|^{-2}), z =  x + i y \to \infty
 \end{equation}
and 
 \begin{equation}
  \label{zero}
  \LL_{\mu}(x+ i y)= \LL_{\mu}(0) + a x + O(|z|^2), z = x+ i y \to 0.
 \end{equation}
\end{definition} 
\noindent Note that 
 $\wt{\mm}(K)\subset \p$. 
 
 We are now ready to state Theorem \ref{convergence1},
whose proof builds on arguments in \cite{BES}. It
will be instrumental in obtaining the lower bound in Theorem
\ref{theo-Lihom}.
\begin{theorem}
 \label{convergence1}
 $\wt{\mm}(K)$ is a relatively open subset of $\mm^{\mathrm{Sym}}(K)$. 
 \end{theorem}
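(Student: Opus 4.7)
My plan is to decompose $\C\setminus \R_+$ into a compact ``bulk'' region and several ``tail/corner'' regions, and to transfer the strict inequality $D_\mu(z):=\LL_\mu(|z|)-\LL_\mu(z)>0$ from $\mu$ to nearby $\nu\in \mm^{\mathrm{Sym}}(K)$ by combining the uniform convergence of logarithmic potentials (Theorem \ref{potconv}) with uniform-in-$\nu$ asymptotic expansions.

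First I would observe that the asymptotic conditions \eqref{infinity} and \eqref{zero} are automatic for any $\nu\in \mm^{\mathrm{Sym}}(K)$. Since $\supp(\nu)\subset K$ is bounded and bounded away from $0$, Taylor expansion of $\log|z-w|$ in $1/z$ (respectively in $z$) produces the expansions with coefficients governed essentially by $\int w\,d\nu$ and $\int w^{-1}\,d\nu$, both real by the symmetry of $\nu$; the $O(\cdot)$ errors are uniform in $\nu\in \mm^{\mathrm{Sym}}(K)$, and the coefficients $a(\nu),b(\nu)$ depend continuously on $\nu$ in the weak topology. Consequently, the theorem reduces to showing that the strict inequality $D_\nu>0$ on $\C\setminus \R_+$ is preserved under small weak perturbations of $\nu$ inside $\mm^{\mathrm{Sym}}(K)$.

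Fix $\mu\in \wt{\mm}(K)$ and $\eta>0$ small. On the compact set $C_\eta:=\{\eta\leq |z|\leq 1/\eta,\ |\arg z|\geq \eta\}\subset K^{\c}$, compactness gives $c_\eta:=\inf_{C_\eta}D_\mu>0$, while Theorem \ref{potconv}(a) yields $\LL_{\mu_n}\to \LL_\mu$ uniformly on $C_\eta\cup[\eta,1/\eta]$, so $D_{\mu_n}\geq c_\eta/2$ on $C_\eta$ for $n$ large. For the tails $\{|z|>1/\eta,\ |\arg z|\geq \eta\}$ and $\{|z|<\eta,\ |\arg z|\geq \eta\}$, the uniform expansion
\[
D_\nu(z)=b(\nu)\frac{|z|-x}{|z|^2}+O(|z|^{-2}),
\]
combined with the geometric bound $(|z|-x)/|z|^2\geq (1-\cos\eta)/|z|$ on $\{|\arg z|\geq \eta\}$, yields strict positivity provided $b(\mu)>0$; the analogous argument near $0$ uses \eqref{zero} and $a(\mu)>0$. (Note that $a(\mu),b(\mu)\geq 0$ are forced by $D_\mu\geq 0$ far out and near $0$ off $\R_+$; the generic strict inequalities are what drives the stability.) Finally, on the thin cone $\{\eta\leq |z|\leq 1/\eta,\ 0<|\arg z|<\eta\}$, the symmetry of $\nu$ makes $\LL_\nu(x+iy)$ even in $y$, and the harmonicity of $\LL_\nu$ at $x\in \R_+$ converts $\partial_y^2$ into $-\partial_x^2$, giving
\[
D_\nu(x+iy)=\frac{y^2}{2}\Bigl[\frac{\LL_\nu'(x)}{x}+\LL_\nu''(x)\Bigr]+O(y^4),
\]
uniformly for $x$ in a compact subset of $\R_+$. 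The bracketed coefficient at $\nu=\mu$ is the leading $y$-coefficient of $D_\mu\geq 0$ along $\R_+$, hence nonnegative, and strictly positive because $D_\mu>0$ immediately off $\R_+$. Uniform convergence of $\LL_{\mu_n}',\LL_{\mu_n}''$ on compacta of $\R_+$, again from Theorem \ref{potconv}(a), transfers this positivity to $\mu_n$.

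The main obstacle will be the corner regions where two degeneracies compound (e.g.\ $|z|$ large \emph{and} $|\arg z|$ small): both the outer and the thin-cone expansions contribute simultaneously, and a uniform two-variable Taylor analysis is needed to ensure the uniform $O(\cdot)$ errors do not swamp the vanishing leading positivity. A secondary issue is the degenerate case $a(\mu)=0$ or $b(\mu)=0$, in which one must pass to higher-order coefficients of the Taylor expansion; their continuity and uniformity in $\nu$ follow from the same Taylor argument, exploiting that $\supp(\nu)\subset K$ is uniformly bounded and bounded away from $0$.
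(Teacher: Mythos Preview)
Your strategy---decompose $\C\setminus\R_+$, use harmonic expansions near $0$ and $\infty$, and push the strict inequality through by continuity of potentials---is the same as the paper's. But there is a genuine gap in your bulk-region step.

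You assert $C_\eta=\{\eta\le|z|\le 1/\eta,\ |\arg z|\ge\eta\}\subset K^{\c}$ and then invoke Theorem~\ref{potconv}(a) to get uniform convergence $\LL_{\mu_n}\to\LL_\mu$ on $C_\eta$. This inclusion is false: for small $\eta$ the set $C_\eta$ \emph{contains} $K=\{\del\le|z|\le1/\del,\ |\arg z|\ge\del\}$, and on $K$ the potential $\LL_\nu$ is only subharmonic, not harmonic. Part (a) of Theorem~\ref{potconv} is therefore unavailable, and in fact $\nu\mapsto\LL_\nu(z)$ is only upper semicontinuous at points of the support. The paper handles its analogous region $\wt K$ using part (b): the map $\nu\mapsto\sup_{\wt K}\bigl(\LL_\nu(z)-\LL_\mu(|z|)\bigr)$ is upper semicontinuous, while $\nu\mapsto\LL_\nu(|z|)$ (evaluated on $\R_+\subset K^{\c}$) is genuinely continuous. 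Combining these one-sided controls is exactly enough to transfer $D_\mu\ge c_\eta$ to $D_\nu\ge c_\eta/2$; your two-sided uniform convergence claim is both unjustified and unnecessary.

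A second difference is structural. You restrict your small- and large-$|z|$ expansions to $|\arg z|\ge\eta$, which manufactures the ``corner'' regions you then flag as the main obstacle. The paper avoids this entirely: writing $\LL_\nu(re^{i\t})=\sum_n a_n^\nu r^n\cos n\t$ gives
\[
\LL_\nu(r)-\LL_\nu(re^{i\t})=\sum_{n\ge1}a_n^\nu r^n(1-\cos n\t),
\]
and via $1-\cos n\t\le n^2(1-\cos\t)$ one factors out $(1-\cos\t)$ to obtain a lower bound $(1-\cos\t)\bigl[a_1^\nu r-Cr^2\bigr]$ valid for \emph{all} $\t$. Thus the near-zero argument already covers every angle once $r$ is small, and similarly at infinity after $z\mapsto 1/z$. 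No corner analysis is needed. Your proposed ``uniform two-variable Taylor analysis'' is a self-inflicted complication.

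On the degenerate cases $a(\mu)=0$ or $b(\mu)=0$: the paper's proof, like yours, tacitly uses $a(\mu),b(\mu)>0$; this is what is needed in the applications (the class $\d$ imposes it explicitly), so the issue is moot for the downstream results.
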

 
\begin{proof}
Fix $\mu \in \wt{\mm}(K)$ and set
$$\mm^{\mathrm{Sym},\mu,\eps}(K)=\{   
\nu \in \mm^{\mathrm{Sym}}(K):
d(\nu,\mu)<\epsilon\}\,.$$
  We will show that there is 
$\epsilon=\epsilon(\mu)$ 
so that
$\mm^{\mathrm{Sym},\mu,\eps}(K)\subset
\wt{\mm}(K)$.
Since 
$\LL_{\nu}(z)=\LL_{\nu}(\ol{z})$ on
$\mm^{\mathrm{Sym}}(K)$, 
we focus on proving that for $\eps$ small enough and all 
for $\nu\in\mm^{\mathrm{Sym},\mu,\eps}(K)$,
\begin{equation} \label{target}\LL_{\nu}(z) < \LL_{\nu}(|z|)\,,
\quad z\ne 0\,.\end{equation} The other properties 
in the definition of $\wt{\mm}(K)$ 
will follow from the proof of (\ref{target}).
  
We handle different 
$z$ according to whether their modulus $|z|$ is small,
large or intermediate, corresponding to the different ranges 
in the definition of $K$. Recall that
$\LL_{\nu}(z)$ is harmonic for $|z|<\del$, 
hence for $z=re^{i \t}$ and $0<r<\del$ we have the expansion
 \begin{equation}
  \label{expand}
  \LL_{\nu}(z)=\sum_{n=0}^{\infty} a_n^{\nu} r^n \cos n\t
 = a_0 + a_1^{\nu} r \cos \t + \Phi_{\nu}(z)
 \end{equation}
 where
%
\begin{equation}
 \label{tail}
 \Phi_{\nu}(z)=\sum_{n=2}^{\infty}a_n^{\nu}r^n \cos n\t.
\end{equation} 

By part (a) of Theorem \ref{potconv} applied to $\LL_\mu$ (and $\LL_\nu$)
in the region
$|z|<\delta$ where both are harmonic,
there exists $\epsilon_0=\epsilon_0(\mu)$ such that for $\eps<\eps_0(\mu)$,
$|\LL_{\nu}(z)|<C(\mu,r_0)$ for all $|z|\le r_0<\delta$ and
$\nu\in\mm^{\mathrm{Sym},\mu,\eps}(K)$.
Cauchy's inequality implies that $ |a_n^{\nu} r_0^n| \le 
C_1(\mu,r_0),$
leading to 
\[|\Phi_{\nu}(z)|\le C_2(\mu,r_0)r^2\,, \quad r < r_0/2.\] 
Recall (\ref{zero}). Invoking part (a) of Theorem 
\ref{potconv} for the first order partial derivative 
$\frac{\partial \LL}{\partial r}$, we have that $a_1^{\nu}> a/2$ as soon as 
$\eps$ is small enough. Thus,
for $r_1=r_1(\mu,\epsilon_0)$ small enough, one has
(\ref{target}) for all $|z|\le r_1$. 
This also verifies (\ref{zero}) for the measure $\nu$.

The case of large $|z|$ is treated similarly, 
using (\ref{infinity}) and the transformation 
\[ \LL_{\nu} (z) \to \log |z| + \LL_{\nu}(1/z). \] 
The upshot is that there exists $r_2=r_2(\mu)>0$
such that (\ref{target}) holds for $\nu
\in \mm^{\mathrm{Sym},\mu,\eps}(K)$, whenever $|z|>r_2$ and $\eps$ is
small enough. 
This also verifies (\ref{infinity}) for the measure $\nu$.
 
We are left to deal with the case $r_1 \le |z| \le r_2$. 
Define the sector
\[ \L_\beta:=\{z: |\mathrm{arg}(z)| \le \beta, r_1 \le |z| \le r_2  \} \] 
Since $\LL_\mu(z)<\LL_\mu(|z|)$ for all $z\ne 0$, 
one has
$\frac{\partial^2}{\partial \t^2} \LL_{\mu}(r)<0$ for $r>0$. 
Hence, there is a constant $c=c(\mu,r_1,r_2)>0$ so that
$\frac{\partial^2}{\partial \t^2} \LL_{\mu} < -c$ 
in some sector $\L_{\beta_0}$ with $\beta_0>0$,
that is disjoint from the set $K$.

Observe that 
$\frac{\partial \LL_\nu}{\partial \t}(r)=0$
because 
$\LL_\nu(re^{i\t})=\LL_\nu(re^{-i \t})$. 
Hence, on $\L_{\beta_0}$, we can write 
\[\LL_\nu(re^{i\t})=\LL_\nu(r) + 
\frac{1}{2}\l(\frac{\partial^2 \LL_\nu}{\partial \t^2}(r)\r)\t^2 + 
g_\nu(r,\t) \t^3,\] where $|g_\nu(r,\t)|\le \sup_{\L_{\beta_0}} \l| 
\frac{ \partial^3 \LL_\nu}{\partial \t^3 }(re^{i \t}) \r|.$  
By part (a) of Theorem \ref{potconv}, for $\eps$ small enough, 
we have on $\L_{\beta_0}$ the inequalities 
\[\frac{ \partial^2 \LL_\nu}{\partial \t^2 }(re^{i \t}) \le -c/2\] and 
\[ \sup_{\L_{\beta_0}} \l| 
\frac{ \partial^3 \LL_\nu}{\partial \t^3 }(re^{i \t}) \r| 
\le \sup_{\L_{\beta_0}} \l| \frac{ \partial^3 \LL_\mu}{\partial \t^3 }(re^{i \t}) \r| + 1. \] 
Therefore, we can find $0 < \beta \le \beta_0$  such that
on $\L_\beta$
we have  \[\frac{1}{2}\l( \frac{\partial^2 \LL_\nu}{\partial \t^2}(r)\r)\t^2 +
g_\nu(r,\t) \t^3 < 0 \] as soon $\eps<\epsilon_1=\epsilon_1(\mu)$, 
implying
that
(\ref{target}) holds in $\L_\beta$ 
for $\nu\in \mm^{\mathrm{Sym},\mu,\eps}(K)$.

Define the compact set \[ \wt{K}:=\{z: |\mathrm{arg}
(z)| \ge \beta, r_1 \le |z| \le r_2  \} \]
Note that $v(z)=\LL_\mu(|z|)$ is continuous in a neighborhood of $\R_+$,
and therefore 
the function $\LL_\mu(z) - \LL_\mu(|z|)$ is upper-semicontinuous on 
$\wt{K}$, and hence attains its maximum there. Since
$\LL_\mu(z) - \LL_\mu(|z|)<0$ on $\wt{K}$, it follows that
there exists a  $c'>0$ such that  $\LL_\mu(z) - \LL_\mu(|z|)< - c'$ on 
$\wt{K}$. 

Next, observe that the function
$(z,\nu)\mapsto \LL_\nu(|z|)$ is continuous on
the compact set
$(r_1 \le |z| \le r_2)\times  {\mm}^{\rm{Sym}}(K)$. Therefore,
for $0<\epsilon<\epsilon_2(\mu)$,
 one has $|\LL_\mu(|z|) - \LL_\nu(|z|)|<c'/2$ for $\nu\in \mm^{\mathrm{Sym},\mu,\eps}(K)$.
 As a result, for such $\nu$ we get
 $|\LL_\mu(|z|) - \LL_\nu(|z|)|<c'/2$ and therefore
\[ \sup_{\wt{K}} \l( \LL_\nu(z) - \LL_\nu(|z|)  \r) \le \sup_{\wt{K}} \l( \LL_\nu(z) -\LL_\mu(|z|)  \r)  + c'/2 . \]
We bound the right hand side from above by invoking part (b) of 
Theorem \ref{potconv} on the set $\wt{K}$ with $f(z)=\LL_\mu(|z|)$. 
This yields the existence of $\epsilon_3(\mu)>0$ so that,
 for $\nu\in \mm^{\mathrm{Sym},\mu,\eps}(K)$ and $\eps\leq\eps_3(\mu)$,
\[\sup_{\wt{K}} \l( \LL_\nu(z) - \LL_\nu(|z|)  \r) \le \sup_{\wt{K}} \l( \LL_\mu(z) -\LL_\mu(|z|)  \r) + c'/2 \le -c'/2. \]
This establishes (\ref{target}) for such $\nu$, 
thereby completing the proof of the theorem.  
\end{proof}
We now introduce the notion
of an $\eps$-perturbation of an atomic measure $\nu$.
\begin{definition}
	Fix $\eps>0$ and 
	an atomic probability measure $\nu\in \mm^{\mathrm{Sym}}$ 
	with $k$ distinct atoms 
$\{\nu_1,\cdots,\nu_k\}$ of equal mass. 
Then 
$U(\nu,\eps)$ denotes 
the set of atomic probability measures $\mu\in \mm^{\mathrm{Sym}}$ with  $k$
distinct atoms $\{\mu_1,\cdots,\mu_k\}$ of
equal mass,
such that $\max_{1\le i \le k}\|\nu_i - \mu_i\|_{\i}<\eps$. 
Further, if $\nu_i$ is real the $\mu_i$ is also required to be real.
\end{definition}

We end this section with two remarks, both of which will 
come in handy in the proof of Proposition \ref{stage1-1}.
\begin{remark}
\label{impact}
The importance of Theorem \ref{convergence1} is the following
approximation result.
Let $\mu \in \p$ be a probability measure supported on $K$. Let 
$\mu_k$ be a sequence of atomic measures with distinct atoms and equal 
mass on each atom, such that $\mu_k \to \mu$ in $\M_1(\C)$. 
Let $\{\a_k\}_k$ be any sequence of positive numbers such that 
$\a_k \to 0$ as $k \to \i$. Then, for sufficiently large $k$, 
we have $U(\mu_k, \a_k) \subset \p$.
\end{remark}
\begin{remark}
 \label{coeff}
As in the proof of Theorem \ref{convergence1},
the convergence of $\frac{\partial^2 \LL}{\partial \t^2}$ 
in 
$\h(K^{\c})$ implies that there exists $\epsilon_5(\mu)$ such that for
$\eps<\epsilon_5(\mu)$,
the coefficient $a_{1}^\nu > 0$  if $\nu\in\mm^{\mathrm{Sym},\mu,\eps}(K)$.
\end{remark}

\section{The Lower Bound}
\label{lbound}
Our goal in this section is to prove the following.
Recall that $f(z,w)=\log |1-z| + \log |1-w| - \log|z-w| $.
\begin{lemma}
        \label{lem-LB}
For any $\mu \in \p$ with $I(\mu)<\infty$,
 \begin{equation}
  \label{lowb}
   \lim_{\eps \to 0} \varliminf_{n \to \infty} \frac{1}{n^2} \log 
   \P_n \l(  d(L_n,\mu) \le \eps \r) \ge  - I(\mu) = 
   - \frac{1}{2}  \iint f(z,w)d\mu(z)d\mu(w).
 \end{equation}
 \end{lemma}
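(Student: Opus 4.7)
The plan is to prove \eqref{lowb} by a Laplace-type lower bound argument using the explicit density \eqref{dist2}, in which I bound $\P_n(d(L_n,\mu)<\eps)$ from below by integrating over a small product neighborhood of a carefully chosen atomic configuration whose empirical measure is close to $\mu$. Two approximation stages are needed: first reducing the general $\mu$ to a measure in $\wt{\mm}(K)$, and second discretizing such a measure into an atomic configuration that actually realizes a polynomial in $\mbox{\rm pol}_+$.

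In the first stage, given $\mu\in\p$ with $I(\mu)<\infty$, I construct approximants $\mu^{(j)}\to\mu$ with $I(\mu^{(j)})\to I(\mu)$, where each $\mu^{(j)}\in\wt{\mm}(K_j)$ for some compact $K_j$ disjoint from $\R_+\cup\{0\}$ and bounded away from both $\infty$ and from $z=1$. The finiteness of $I(\mu)$, together with the precompactness estimate from Lemma \ref{lem-Ilsc} and the local bound of Lemma \ref{obr2}, permits truncation of mass in annular regions around $0$, around $\infty$, and around $z=1$ without changing $I$ by more than $o(1)$. Symmetric smoothing by convolution with a well-chosen kernel should then produce the strict inequality $\LL_{\mu^{(j)}}(z)<\LL_{\mu^{(j)}}(|z|)$ for $z\notin\R_+$ and the expansions \eqref{infinity} and \eqref{zero}; this is analogous to the regularity promotion carried out in \cite{BES}.

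In the second stage, given $\mu\in\wt{\mm}(K)$, I choose symmetric atomic measures $\mu_n=\frac{1}{n}\sum_{i=1}^n\delta_{z_i^{(n)}}$ with distinct well-spaced atoms converging weakly to $\mu$, and a sequence $\alpha_n\to 0$ with $\log(1/\alpha_n)=o(n)$. By Remark \ref{impact}, $U(\mu_n,\alpha_n)\subset\p$ for $n$ large, so every ordered $n$-tuple in the $\alpha_n$-product neighborhood of $(z_1^{(n)},\ldots,z_n^{(n)})$ lies in the set $B_{n,k}$ of valid zero configurations. Integrating the density \eqref{dist2} over this region, the combinatorial factor $k!(n-2k)!$ from \eqref{dist1} cancels against the number of permutations that produce the same configuration, the Lebesgue volume contributes $O(n\log(1/\alpha_n))=o(n^2)$ on the log scale, and $\mathcal{Z}_{n,k}$ is $o(n^2)$ by \eqref{eq-znk}. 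On the integration region, since atoms are well separated from each other and from $z=1$, the exponent in \eqref{dist2} equals $-n^2 I(\mu_n)+o(n^2)$, and $I(\mu_n)\to I(\mu)$ by a truncation argument using Lemma \ref{obr2} and the continuity of the functionals $I^{\eps,M}$ appearing in the proof of Lemma \ref{lem-Ilsc}.

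The main obstacle is expected to lie in the first stage. The Bergweiler--Eremenko characterization in Theorem \ref{Ere} yields only the weak inequality $\widehat{\LL}_\mu(z)\leq\widehat{\LL}_\mu(|z|)$, and upgrading it to the strict inequality with the prescribed asymptotic expansions at $0$ and $\infty$, while simultaneously preserving convergence of $I$, is delicate; this is presumably why the paper defers the execution of both stages to Sections \ref{sec-proofstage1-1} and \ref{erapprox}.
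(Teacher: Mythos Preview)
Your overall two-stage strategy matches the paper's, and your first stage (reducing to a nice class like $\wt{\mm}(K)$ with convergence of $I$) is essentially what the paper does in Section~\ref{erapprox}. The genuine gap is in your second stage, at the sentence ``By Remark~\ref{impact}, $U(\mu_n,\alpha_n)\subset\p$ for $n$ large, so every ordered $n$-tuple in the $\alpha_n$-product neighborhood \ldots\ lies in the set $B_{n,k}$.'' This implication is false. The set $\p$ is the \emph{closure} of empirical measures of polynomials in $\mbox{\rm pol}_+$; an atomic measure $\nu$ with $n$ atoms can belong to $\p$ (even to $\wt{\mm}(K)$) without the monic polynomial $\prod_i(z-z_i)$ having positive coefficients. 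Membership in $B_{n,k}$ requires exactly that positivity, and nothing in Theorem~\ref{convergence1} or Remark~\ref{impact} delivers it. What De~Angelis' theorem (Theorem~\ref{dat}) gives is only that some \emph{power} $Q^m$ has positive coefficients, and the exponent $m$ depends on the polynomial.

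The paper handles this by a substantially more elaborate construction (Stages~II and~III of Section~\ref{lowd}). It fixes a coarse atomic approximant $\mu_k$ with $a(k)$ atoms, uses De~Angelis plus a compactness argument over $\overline{U(\mu_k,\rho(k))}$ to find a uniform threshold $m(k)$ such that any product of at least $m(k)$ nearby factors $P_{\nu(i)}$ lies in $\mbox{\rm pol}_+$, and then builds degree-$n$ configurations by placing roughly $n/a(k)$ points near each atom of $\mu_k$, packaged into products of enough factors to exceed $m(k)$. Only this layered construction guarantees that the integration region actually sits inside $B_{n,k}$. Your proposal to take $\mu_n$ with exactly $n$ atoms and integrate over a single $\alpha_n$-box skips this machinery and, as written, does not produce configurations on which the density \eqref{dist2} is supported.
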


The proof of Lemma \ref{lem-LB} proceeds by several approximation steps.
Those are detailed in the rest of the subsection, with several technical
propositions deferred to Sections \ref{sec-proofstage1-1} and \ref{erapprox}.
The approximation is inspired by \cite{BES}.

\subsection{A dense subclass $\d \subset \p$}
We introduce a
dense (in the metric of $\M_1(\C)$)
subset $\d \subset \p$  such that for any  measure $\mu \in \p$ there is
a sequence $\{\mu_m\}_{m=1}^{\infty}$ from $\d$ such that 
\begin{itemize}
        \item[(i)] $\mu_m \to \mu  $ as $m\to\infty$ (convergence in the weak topology of $\M_1(\C)$),
        \item[(ii)] $I(\mu_m) \to I(\mu)$ as $m \to \infty$, 
        \item[(iii)] For any $\nu \in \d$, the estimate  (\ref{lowb}) holds.
\end{itemize}

$\D \subset \M_1(\C)$  will consist of those
probability measures $\nu$ satisfying:
\begin{itemize}
        \item[(a)] 
                $\supp(\nu)$ is contained in
                 a compact annulus centered at the origin.
         \item[(b)] $\supp(\nu)$ is disjoint from
                 a cone whose apex is at the origin and 
		 whose axis is the positive ray
                 $\R_+$. 
         \item[(c)] $\widehat{\LL}_{\nu}(z) < 
                 \widehat{\LL}_{\nu}(|z|) $ 
                 for each $z \in \C\setminus \bar \R_+$, 
                 while $\widehat{\LL}_{\nu}(z)=
                 \widehat{\LL}_{\nu}(0)+
                 a \Re{z} + O(|z|^2)$ as $|z|\to 0 $ 
                 and $\widehat{\LL}_{\nu}(z)=\log |z| + b{\Re(1/z)} + 
                 O(1/{|z|^2})$ as 
                 $|z|\to \infty$, with both $a$ and $b$ being positive.
         \item[(d)] $\nu$ has a bounded density with respect to the Lebesgue measure. 
\end{itemize}

We will use the \cite{BES} 
construction (with a slight modification of their step 5 and an additional step 6),
in order
to construct, for any $\mu\in\p$ with $I(\mu)<\infty$,
a sequence $\mu_\eps\in \d$ with $\mu_\eps\to\mu$, and further ensuring that 
 $I(\mu_\eps)\to_{\eps\to 0} I(\mu)$.
For the sake of completeness, we give a detailed account of the 
construction, its modifications and approximation properties
in Section \ref{erapprox}. 
That is,
we prove in Section 
\ref{erapprox} the following proposition.
\begin{proposition}
        \label{prop-era}
        For any $\mu \in \p$ with $I(\mu)<\infty$, there
        exists a sequence $\mu_\eps\in \d$ so that $\mu_\eps\to_{\eps\to
        0} \mu$
        and $I(\mu_\eps)\to_{\eps\to 0} I(\mu)$.
\end{proposition}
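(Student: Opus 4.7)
The proof will follow the five-step approximation construction used by Bergweiler and Eremenko to prove Theorem \ref{Ere}, with a modification of their step~5 and an added sixth step to ensure convergence of the energy functional $I$. In rough outline, starting from $\mu\in\p$ with $I(\mu)<\infty$, the plan is to produce successively: (a) a measure $\mu^{(1)}_\eps$ supported in an annulus $\{\eps\le|z|\le 1/\eps\}$, obtained by projecting the inner tail of $\mu$ radially onto the circle $\{|z|=\eps\}$ and symmetrically for the outer tail; (b) a measure $\mu^{(2)}_\eps$ whose support avoids a narrow cone around $\R_+$, by rotating mass out of $\{|\arg z|<\eps\}$ and symmetrizing in the real line (using that Theorem~\ref{obr1} bounds the cone mass by $2\eps/\pi$); (c) a measure $\mu^{(3)}_\eps$ with bounded Lebesgue density, obtained by convolution with a small symmetric kernel followed by renormalization; (d) a measure $\mu^{(4)}_\eps$ obtained by a slight additional perturbation near $0$ and near $\infty$ so that the expansions of $\widehat{\LL}_\nu$ required by clause~(c) in the definition of $\d$ hold with strictly positive $a,b$ (this is the modified step~5). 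At each stage one has to verify (i) membership in $\p$, (ii) weak convergence to $\mu$, and (iii) convergence of $I$.

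For (i), the Bergweiler--Eremenko criterion (conjugation symmetry plus $\widehat{\LL}_\mu(z)\le\widehat{\LL}_\mu(|z|)$) is preserved by each of the radial-projection, angular-rotation, convolution and symmetrization operations described above, so membership in $\p$ can be checked at each step. Weak convergence in (ii) is routine because every perturbation moves only a vanishing mass by a vanishing distance. The substance of the argument is (iii). The key uniform tool is Lemma \ref{obr2}, which bounds $\int_{A_M}|\log|1-z||\,d\nu(z)$ by a quantity that tends to $0$ as $M\to\infty$ \emph{uniformly} over $\nu\in\p$; combined with the uniform tail control on $\{\nu:I(\nu)\le M\}$ from the proof of Lemma \ref{lem-Ilsc}, this lets one split the integrals defining $I$ into a bulk region, on which $\log|1-z|$ and $\log|z-w|$ are continuous and bounded so weak convergence suffices, and a singular part whose contribution is uniformly small.

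The added sixth step addresses the one piece that does not follow from the BES-type manipulations alone: the convolution in (c) can create pairs $(z,w)$ at distance $O(\eps)$, spuriously decreasing the logarithmic interaction, and can shift mass onto a small neighborhood of $1$, distorting $\int\log|1-z|\,d\nu(z)$. One handles this by a diagonal extraction of the scale parameters so that the kernel width is chosen much smaller than the inverse modulus of continuity governing the tail bounds of the previous steps; more concretely, because $I(\mu)<\infty$ implies $\mu$ has no atoms and the function $f(z,w)=\log|1-z|+\log|1-w|-\log|z-w|$ is $\mu\otimes\mu$-integrable, a dominated convergence argument combined with the uniform-in-$\p$ estimates of Lemma \ref{obr2} shows that $\iint f\,d\nu^{(3)}_\eps\otimes d\nu^{(3)}_\eps\to\iint f\,d\mu\otimes d\mu$.

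The main obstacle will be assembling the error estimates so that all the approximation errors vanish simultaneously as a single parameter $\eps\to 0$: each of the five steps must be small enough that the estimates for the subsequent steps remain valid uniformly in the parameters of the earlier ones. The uniformity in $\nu\in\p$ provided by Lemma \ref{obr2}, together with the fact that all intermediate measures stay in the (uniformly controlled) level set $\{I\le I(\mu)+o(1)\}$, is what makes such a simultaneous extraction possible.
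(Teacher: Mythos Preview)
Your outline has the right skeleton (five BES steps, a modified fifth step, an added sixth), but the concrete operations you propose are \emph{not} the ones used in \cite{BES} or in the paper, and the substitution creates a genuine gap.

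The BES construction works at the level of \emph{potentials}, not measures: Step~1 sets $u_1(z)=\max_{|\alpha|\le\eps}u(ze^{i\alpha})$; Step~2 is balayage of the Riesz measure out of a cone $\{|\arg z|\le\eps_1\}$; Steps~3--4 are translations and inversions $u\mapsto u(\cdot+\eps)$, $u\mapsto u(1/\cdot)+\log|\cdot|$. These operations are chosen precisely because the inequality $u(z)\le u(|z|)$ (equivalently, the Bergweiler--Eremenko criterion) is preserved \emph{by construction}. Your replacements --- radial projection of mass onto circles, angular rotation of mass out of a cone --- are different maps on $\M_1(\C)$, and it is not clear that they preserve the criterion $\widehat{\LL}_\nu(z)\le\widehat{\LL}_\nu(|z|)$. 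For instance, projecting $\mu|_{\{|z|<\eps\}}$ radially onto $\{|z|=\eps\}$ alters $\widehat{\LL}_\nu$ globally in a way that need not respect the angular monotonicity; you would have to prove this, and it is not a consequence of Theorem~\ref{Ere}.

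The more serious gap is in your treatment of $\Sigma(\mu_\eps)\to\Sigma(\mu)$. The map $\nu\mapsto -\Sigma(\nu)$ is only lower semicontinuous, so weak convergence gives $\limsup\Sigma(\mu_\eps)\le\Sigma(\mu)$ but not the reverse. Lemma~\ref{obr2} controls $\int_{\{|1-z|\le e^{-M}\}}|\log|1-z||\,d\nu$ uniformly in $\p$, which handles the linear term $\int\log|1-z|\,d\nu$; it says nothing about $\iint_{\{|z-w|\le e^{-M}\}}|\log|z-w||\,d\nu\,d\nu$, which is the obstruction for $\Sigma$. The paper obtains the missing inequality in Steps~1--2 via a potential-theoretic lemma (Proposition~\ref{enconv}): if subharmonic $u_n\searrow u_0$ pointwise and $u_n=\LL_{\mu_n}+k_n$ with $k_n\to k_0$, then $\Sigma(\mu_n)\to\Sigma(\mu_0)$. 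This uses the monotonicity of the potentials in an essential way and is exactly why BES work with $\max$ and balayage rather than direct measure manipulations. In Steps~3--4 the paper instead tracks $\Sigma$ through the explicit identities $\Sigma(\mu_{u(\cdot+\eps)})=\Sigma(\mu_u)$ and $\Sigma(\mu_{u(1/\cdot)+\log|\cdot|})=\Sigma(\mu_u)-2\LL_{\mu_u}(0)$, together with continuity of the potential at a point where it is harmonic. None of this is recovered by a dominated-convergence argument based on Lemma~\ref{obr2}; your claimed control on the ``singular part'' of $\iint\log|z-w|$ is unsupported.

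In short: replace your measure-level operations by the potential-level ones from \cite{BES}, and replace the appeal to Lemma~\ref{obr2} for $\Sigma$ by the monotone-potential argument (Proposition~\ref{enconv}) in Steps~1--2 and the exact algebraic identities in Steps~3--4. Your Step~6 (smoothing by convolution) matches the paper's and is handled there by the elementary inequality $\E\log|z+\eps W|\ge\log|z|-C_\eps$ with $C_\eps\to0$.
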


Equipped with 
Proposition \ref{prop-era} and in view of properties (i)--(iii) above, 
Lemma
\ref{lem-LB} is an immediate  consequence of the following proposition 
and the local nature of the large deviations lower bound.
\begin{proposition}
        \label{prop-LB}
        The lower bound \eqref{lowb} holds for any $\mu\in \d$.
\end{proposition}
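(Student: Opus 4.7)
The plan is a classical Coulomb-gas lower bound, adapted to the constraint that the zero configurations must correspond to polynomials in $\mbox{\rm pol}_+$. Fix $\mu\in\d$. Since $\mu$ has bounded density, support in a compact annulus away from a cone around $\R_+$, and is symmetric under conjugation, I will first build a reference configuration by partitioning the support of $\mu$ into $n$ cells of equal $\mu$-mass, compatibly with conjugation symmetry, and taking representative points $z_1^{(n)},\ldots,z_n^{(n)}$ (for instance the $\mu$-center of mass of each cell). Properties (a), (b) and (d) of $\d$ guarantee that the nearest-neighbor distance is at least $c n^{-1/2}$, that the real-zero count $n-2k_n$ can be chosen so that $k_n/n\to \mu(\C\setminus\R)/2$, and that the atomic measure $\mu^{(n)}=\frac{1}{n}\sum_i\delta_{z_i^{(n)}}$ converges weakly to $\mu$.

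The second step is to invoke Remark \ref{impact}, which supplies a sequence $\a_n\downarrow 0$ such that every $n$-tuple lying within $\ell^\infty$-distance $\a_n$ of $(z_1^{(n)},\ldots,z_n^{(n)})$ (with the correct conjugation/reality pattern) is the zero set of a polynomial in $\mbox{\rm pol}_+$; equivalently, the neighborhood $U(\mu^{(n)},\a_n)$ lies in $B_{n,k_n}$. I would then lower bound $\P_n(d(L_n,\mu)\le\eps)$ by the $\P_n$-mass of this neighborhood using \eqref{dist2}. On it, continuity of $\log|1-z|$ (well defined off $\R_+$ by property (b)) and the controlled pairwise distances give the Riemann-sum estimates
\begin{align*}
\frac{1}{n^2}\sum_{i\ne j}\log|z_i-z_j|&=\iint\log|z-w|\,d\mu(z)\,d\mu(w)+o(1),\\
\frac{1}{n}\sum_i\log|1-z_i|&=\int\log|1-z|\,d\mu(z)+o(1).
\end{align*}
The Lebesgue volume of the neighborhood is of order $\a_n^n=e^{o(n^2)}$ (taking $\a_n$ subexponentially small in $n$), and $n^{-2}\log\mathcal{Z}_{n,k_n}\to 0$ by \eqref{eq-znk}. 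Combining these inputs yields
\[\frac{1}{n^2}\log\P_n(d(L_n,\mu)\le\eps)\ge -I(\mu)+o(1),\]
which is the desired lower bound.

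The main obstacle is the admissibility requirement in the second step: $\P_n$ gives no mass outside $\bigcup_k B_{n,k}$, so the entire neighborhood of the reference configuration must lie inside $B_{n,k_n}$. This is precisely where Remark \ref{impact} (resting on the openness result Theorem \ref{convergence1}) is essential, and it is the reason $\d$ was defined through the strict inequality in (c) together with the positive-coefficient asymptotics at $0$ and $\infty$: those conditions place the atomic approximations $\mu^{(n)}$ inside the relatively open set $\wt{\mm}(K)\subset\mm^{\mathrm{Sym}}(K)$, so each admits a full neighborhood still contained in $\p$. A secondary technical point is controlling the logarithmic singularity in the Riemann-sum approximation; however, properties (a), (b) and (d) of $\d$ — compact annular support, separation from $\R_+$, and bounded density — provide the uniform cell separation ($\gtrsim n^{-1/2}$) and uniform bound on $|\log|1-z||$ needed to make both approximations above of order $o(1)$.
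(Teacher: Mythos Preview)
There is a genuine gap in your admissibility step. Remark \ref{impact} tells you that $U(\mu^{(n)},\a_n)\subset\p$, i.e.\ that every nearby atomic measure satisfies the Bergweiler--Eremenko potential inequality $\widehat{\LL}_\nu(z)\le\widehat{\LL}_\nu(|z|)$. It does \emph{not} say that the monic polynomial $P_\nu$ with those $n$ zeros has positive coefficients, and hence it does not place the configuration in $B_{n,k_n}$. For an atomic measure, the potential inequality is exactly $|P_\nu(z)|\le P_\nu(|z|)$; by De~Angelis' Theorem (Theorem \ref{dat}), this together with $a_1,a_{d-1}>0$ is equivalent only to \emph{some power} $P_\nu^m$ having positive coefficients, not $P_\nu$ itself. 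So the region you integrate over may carry no $\P_n$-mass at all.

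This is precisely why the paper's argument is two-scale. One first fixes a coarse atomic approximation $\mu_k$ with $a(k)$ atoms (Proposition \ref{stage1-1}); De~Angelis' theorem is invoked there to pass from the $a(k)$-degree polynomial $Q_k$ to a power $Q_k^{h(k)}\in\mbox{\rm pol}_+$. Then, with $k$ held fixed, one lets $n\to\infty$ and builds $n$-point configurations by placing roughly $n/a(k)$ points in small boxes near each atom of $\mu_k$; the resulting degree-$n$ polynomial factors as a product of at least $m(k)$ perturbations of $Q_k$, and a compactness/continuity argument (Stage~II) shows every such product lies in $\mbox{\rm pol}_+$. The $k\to\infty$ limit is taken only at the very end. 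Your direct one-scale discretization with $n$ cells sidesteps this mechanism and therefore cannot guarantee that the neighborhood lies in $B_{n,k_n}$. The Riemann-sum and volume estimates you outline are fine in spirit, but they are applied to a set that has not been shown to carry any $\P_n$-mass.
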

The rest of this section is devoted  to the proof of Proposition
\ref{prop-LB}.

\subsection{Proof of Proposition \ref{prop-LB}.}
\label{lowd}
The proof proceeds in several approximation steps.
We fix throughout a $\mu\in \d$.
We first construct in Proposition \ref{stage1-1} 
a sequence of polynomials with positive coefficients
whose empirical measure of zeros $\mu_k$
approximates $\mu$, at the same time ensuring 
their (discrete) logarithmic energies approximate 
the logarithmic energy of $\mu$. We then show in Proposition
\ref{stage1-2} that it is
enough to prove the lower bound for balls centered at the 
 $\mu_k$. The proof of
the later lower bound is then obtained by first
constructing appropriate neighborhoods of $\mu_k$,
and then lower bounding the probability by lower bounding
the density of $\P_n$ on these neighborhoods.

\noindent
\textbf{Stage I: Reduction to  atomic measures }
\newline
We introduce the discrete version of logarithmic 
energy for atomic measures with distinct atoms, as
follows.
\begin{definition}
\label{sig}
For an atomic measure $\mu$ with $k$ distinct atoms
$\{z_i\}_{i=1}^k$ of equal mass, let 
\[\S(\mu)=\frac{1}{k^2} \sum_{i \ne j} \log |z_i -z_j|\] 
denote the discrete logarithmic energy. 
\end{definition}
By an abuse of notation, we will also use the same notation
$\S(P)$ where $P$ is a polynomial (with distinct zeros); in 
that case,
the atomic measure being considered is the empirical measure 
of the zeros of $P$. 
We begin with the following proposition.
\begin{proposition}
\label{stage1-1}
For each $\mu\in \d$ one may find a sequence of monic polynomials $\{P_k\}$,
with empirical measure of zeros $\{\mu_k\}$,
satisfying the following properties.
\begin{itemize}
\label{poly}
\item[(i)] $P_k$ has positive coefficients.
\item[(ii)] $\mu_k\in \p$, $\mu_k \to \mu$ in $\M_1(\C)$ and 
        $\S(\mu_k) \to \Sigma(\mu)$ as $k \to \infty$.
\item[(iii)] $\mu_k$ has $a(k)$ distinct atoms, 
        none of which is real,
        and $\mu_k$  puts equal mass of $1/a(k)$ on each atom, 
        with $a(k) \to \infty$ as $k \to \infty$. In particular,
	$a(k)$ is even. Further, all atoms of $\mu_k$ are within distance
	$1/a(k)$ from the support of $\mu$.
\item[(iv)] $P_k$ is of degree $d(k)=a(k)h(k)$ for some positive 
        integer $h(k)$.
\item[(v)] The distance between any two distinct atoms of $\mu_k$ is at least $a(k)^{-2}$.
\item[(vi)] For $k$ large enough, $U(\mu_k,\frac{1}{2}a(k)^{-2}) \subset \p$. 
\end{itemize}
\end{proposition}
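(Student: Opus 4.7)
The plan is to construct $\mu_k$ via a cell decomposition of $\mu$ with equal masses, and then to invoke Theorem~\ref{convergence1} together with the constructive content of the Bergweiler--Eremenko theorem to realize each $\mu_k$ as the empirical measure of zeros of a monic polynomial with positive coefficients.

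First I would build the atoms as follows. Using property~(d) of $\d$ (bounded density), partition $\supp(\mu)$ into $a(k)$ Borel cells of equal $\mu$-mass $1/a(k)$, symmetric under $z\mapsto\bar z$ and with $a(k)$ even. Properties~(a) and~(b) guarantee that all cells sit in a fixed compact subset of $K^\complement$ (with $K$ as in Section~\ref{meastop}), bounded away from both $\R_+$ and a neighborhood of the origin; bounded density gives cell diameters of order $a(k)^{-1/2}$. Pick a representative off $\R$ from each cell and perturb each by at most $a(k)^{-2}$ to enforce distinctness, conjugation symmetry, non-realness, and a minimum pairwise separation of $a(k)^{-2}$. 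This delivers properties~(iii) and~(v) and yields $\mu_k\to\mu$ weakly. For property~(ii), property~(d) makes $\Sigma(\mu)$ well defined and ensures that $\log$ is uniformly integrable near the diagonal against $\mu\otimes\mu$; weak convergence $\mu_k\otimes\mu_k\to\mu\otimes\mu$ after truncation, combined with the fact that removing the $a(k)$ diagonal terms from the $a(k)^{-2}$-weighted double sum costs only $O(a(k)^{-1}\log a(k))\to 0$ thanks to the separation lower bound in~(v), gives $\S(\mu_k)\to\Sigma(\mu)$.

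For properties~(i), (iv) and~(vi) I would invoke Theorem~\ref{convergence1}. Property~(c) places $\mu$ in $\wt{\mm}(K)$ for the $K$ above, since $\widehat{\LL}_\mu$ and $\LL_\mu$ differ by the constant $C_\mu$ under property~(a), so the strict potential inequality transfers. By Theorem~\ref{convergence1}, $\wt{\mm}(K)$ is relatively open in $\mm^{\mathrm{Sym}}(K)$, hence for $k$ large $\mu_k\in\wt{\mm}(K)\subset\p$, and by Remark~\ref{impact} the whole neighborhood $U(\mu_k,\frac12 a(k)^{-2})\subset\p$, establishing~(vi). To upgrade $\mu_k\in\p$ to an explicit polynomial, I would invoke the construction underlying the proof of Theorem~\ref{Ere} in~\cite{BES}: when a conjugation-symmetric atomic measure lies in $\wt{\mm}(K)$ (i.e.\ satisfies the \emph{strict} Bergweiler--Eremenko inequality), raising every atom to a common sufficiently large multiplicity $h(k)$ produces a polynomial all of whose coefficients are positive. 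Setting $P_k:=\prod_{i=1}^{a(k)}(z-z_i)^{h(k)}$ with this $h(k)$ yields a monic polynomial of degree $a(k)h(k)$ in $\mbox{\rm pol}_+$ whose empirical measure is exactly $\mu_k$, delivering~(i) and~(iv).

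The main obstacle is precisely this last upgrade: passing from $\mu_k\in\p$ (a closure membership) to an explicit $P_k\in\mbox{\rm pol}_+$ with empirical measure equal to $\mu_k$ and a uniform multiplicity across all zeros. The strictness in property~(c) of $\d$, which is what was gained by restricting from $\p$ to $\d$, is precisely what provides the quantitative slack in the \cite{BES} construction needed for positivity of all coefficients at large multiplicity. The remaining ingredients--cell-decomposition discretization, control of the discrete logarithmic energy, and the openness from Theorem~\ref{convergence1}--are fairly standard potential-theoretic bookkeeping.
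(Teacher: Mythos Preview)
Your overall architecture---discretize $\mu$, use Theorem~\ref{convergence1} to land in $\wt{\mm}(K)$, then raise to a high power for positive coefficients---matches the paper. The last step is precisely De~Angelis' Theorem (Theorem~\ref{dat} in the Appendix), which the paper cites explicitly and which you invoke as ``the \cite{BES} construction''; the paper verifies its hypotheses ($a_0,a_1,a_{d-1}>0$ and $|Q_k(z)|<Q_k(|z|)$) via Remark~\ref{coeff} and conjugation symmetry of the non-real atoms, and then sets $P_k=Q_k^{h(k)}$.

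Where you diverge, and where there is a gap, is the discretization. You claim that bounded density forces equal-mass cell diameters of order $a(k)^{-1/2}$, but a density bound from \emph{above} gives only a \emph{lower} bound on cell area, not an upper bound on diameter. Property~(d) carries no lower density bound (and for the $\mu_6$ of Step~6 in Section~\ref{erapprox} the density vanishes at $\partial\,\supp\mu$), so equal-mass cells can be long and thin. This undermines your weak-convergence step; your energy-convergence argument as written removes only the $a(k)$ diagonal terms and does not control the near-diagonal off-diagonal contribution $\{(i,j):|z_i-z_j|<e^{-M}\}$; and your separation-by-$a(k)^{-2}$-perturbation can fail if many representatives cluster. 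The paper avoids all of this with a probabilistic construction (Lemma~\ref{sampling}): take i.i.d.\ samples from the upper-half-plane part of $\mu$, translate by $i/a(k)$ to leave $\R$, and add conjugates. Weak convergence is then the law of large numbers; energy convergence uses $\E[|g_M(X_i,X_j)|]=\iint|g_M|\,d\mu\otimes d\mu\to 0$ for independent samples followed by a diagonalization in $M$; and the $a(k)^{-2}$ separation comes from Borel--Cantelli via $\P(|X_i-X_j|<a(k)^{-2})\leq C a(k)^{-4}$. This uses only the \emph{upper} density bound and is considerably cleaner than patching a deterministic cell decomposition.
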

\noindent
The proof of Proposition 
\ref{stage1-1} is given in Section 
\ref{sec-proofstage1-1}.

The importance of the $\{P_k\}$ in Proposition 
\ref{stage1-1} lies in the following proposition.
\begin{proposition}
\label{stage1-2} 
To obtain (\ref{lowb}) for some  $\mu\in \d$, 
it suffices to prove that with
$\mu_k$ as in Proposition \ref{stage1-1}, for each $\eps>0$ and all large enough $k$ (depending on $\eps$), we have 
\begin{align} \label{ineqk1}  & \varliminf_{n \to \infty}
        \frac{1}{n^2}\log \P_n\l( d(L_n,\mu_k) \le \eps  \r)\nonumber\\ 
        \ge &- \frac{1}{2} \l( \int \log|1-z|d\mu_k(z) + 
        \int \log|1-z| d\mu_k(w) - \Sigma_{\mathrm{a}}(\mu_k) \r) + o_k(1).
\end{align}
Here $o_k(1)$ denotes a quantity which converges to 0 as $k \to \i$, all other variables being held fixed.
\end{proposition}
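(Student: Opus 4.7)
The plan is to combine the hypothesis (\ref{ineqk1}) with the two convergence facts supplied by Proposition \ref{stage1-1}(ii) and Proposition \ref{stage1-1}(iii), interpreted through the structural properties (a), (b) of the class $\d$. In one sentence: the atomic bounds at $\mu_k$, together with $\mu_k \to \mu$ weakly and $\S(\mu_k)\to \Sigma(\mu)$, let one reconstruct the target lower bound for $\mu$ itself.

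Fix $\eps>0$, and note that it suffices to prove the lower bound in (\ref{lowb}) for each such $\eps$ (before letting $\eps\to 0$). Since $\mu_k\to \mu$ weakly, one has $d(\mu_k,\mu) < \eps/2$ for all sufficiently large $k$. By the triangle inequality, $\{d(L_n,\mu_k)\le \eps/2\} \subseteq \{d(L_n,\mu)\le \eps\}$, and therefore
\[
\P_n\bigl(d(L_n,\mu)\le \eps\bigr) \;\ge\; \P_n\bigl(d(L_n,\mu_k)\le \eps/2\bigr).
\]
Applying the hypothesis (\ref{ineqk1}) with $\eps/2$ in place of $\eps$, which is permitted for all $k$ large enough (depending on $\eps$), yields
\[
\liminf_{n\to\infty}\frac{1}{n^2}\log \P_n\bigl(d(L_n,\mu)\le \eps\bigr) \;\ge\; -\int \log|1-z|\,d\mu_k(z) + \tfrac{1}{2}\S(\mu_k) + o_k(1).
\]

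It remains to take $k\to\infty$ on the right-hand side. The convergence $\S(\mu_k)\to\Sigma(\mu)$ is precisely Proposition \ref{stage1-1}(ii). For the linear term, the only real obstacle is that $\log|1-z|$ is not a bounded continuous function on $\C$, but properties (a) and (b) of $\d$ remove this obstacle cleanly: $\supp(\mu)$ lies in a compact annulus, and it is disjoint from a cone around $\R_+$ that in particular separates it from the point $1$, so $\log|1-\cdot|$ is bounded and continuous on a neighborhood $U$ of $\supp(\mu)$. By Proposition \ref{stage1-1}(iii), all atoms of $\mu_k$ lie within distance $1/a(k)$ of $\supp(\mu)$, hence for $k$ large $\supp(\mu_k)\subset U$; combined with the weak convergence $\mu_k\to\mu$ this gives $\int \log|1-z|\,d\mu_k(z)\to \int \log|1-z|\,d\mu(z)$.

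Putting the three limits together and using $\iint f(z,w)\,d\mu(z)\,d\mu(w) = 2\int \log|1-z|\,d\mu(z) - \Sigma(\mu)$, we obtain
\[
\liminf_{n\to\infty}\frac{1}{n^2}\log \P_n\bigl(d(L_n,\mu)\le \eps\bigr) \;\ge\; -\tfrac{1}{2}\iint f(z,w)\,d\mu(z)\,d\mu(w),
\]
which is the desired bound at the fixed $\eps$; taking $\eps\to 0$ then recovers (\ref{lowb}). The only step that is not purely book-keeping is the convergence of the potential $\int \log|1-z|\,d\mu_k$, and the main point there is to invoke the local regularity built into the definition of $\d$ rather than trying to argue uniformly in $\M_1(\C)$.
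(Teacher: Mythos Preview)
Your proof is correct and follows essentially the same route as the paper's: reduce to the $\eps/2$-ball around $\mu_k$ via the triangle inequality, apply the hypothesis (\ref{ineqk1}), then pass to the limit in $k$ using $\S(\mu_k)\to\Sigma(\mu)$ from Proposition \ref{stage1-1}(ii) together with the fact that $\log|1-\cdot|$ is bounded and continuous on a neighborhood of $\supp(\mu)$ (this last point being exactly where the support properties (a), (b) of $\d$ and item (iii) of Proposition \ref{stage1-1} enter). The paper's argument is identical in structure and detail.
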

\begin{proof}[Proof of Proposition \ref{stage1-2}.]
Given $\eps >0$, we have for all $k$ large enough the inclusion of 
sets \[\{\nu: d(\nu,\mu_k) <\eps/2\} \subset \{\nu:d(\nu,\mu)<\eps\}.\]
This implies that given $\eps>0$, we have for all large enough 
$k$ \begin{align}  \label{ineqk22} &&\varliminf_{n \to \infty} 
        \frac{1}{n^2} \log \P_n \l(  d(L_n,\mu) \le \eps \r) 
        \ge \varliminf_{n \to \infty} \frac{1}{n^2}\log \P_n\l(
        d(L_n,\mu_k) \le \eps/2  \r) \nonumber \\ &&\ge - 
        \frac{1}{2} \l( \int \log|1-z|d\mu_k(z) + \int \log|1-w| d\mu_k(w) - 
        \Sigma_{\mathrm{a}}(\mu_k) \r) + o_k(1)  .\end{align}
Let $\mathrm{Supp}^\delta(\mu)$ denote the (closed) $\delta$-blowup of the 
support of $\mu$. For $\delta<\delta_0(\mu)$,
we have that $z\mapsto \log|1-z|$ is continuous on 
$\mathrm{Supp}^\delta(\mu)$, while by
property (iii) of  Proposition \ref{stage1-1},
for all $k$ large,
the support of each $\mu_k$ is contained in 
$\mathrm{Supp}^\delta(\mu)$. Thus,
$\mu_k \to \mu$ in $\M_1(\C)$ 
implies that $ \int \log|1-z|d\mu_k(z) \to  \int \log|1-z|d\mu(z)$. 
Moreover, by property (ii), 
we have that $\S(\mu_k) \to \Sigma(\mu)$ as $k\to\infty$.  
Letting now $k\to\infty$ first and then $\eps\to 0$ 
in (\ref{ineqk22}) yields (\ref{lowb}).
\end{proof}

\noindent
\textbf{Stage II: The Neighbourhoods $\n_n(\eps,k)$ of $\mu_k$}

In this stage we will fix $\eps>0$ and $k$ (to be chosen large enough depending on $\eps$) and define  
suitable $n$-dependent
neighbourhoods $\n_n(\eps,k)$
of $\mu_k$ in $\M_1(\C)$,  which are contained in the 
set $\{\nu: d(\nu,\mu_k)<\eps \}$. We begin by introducing 
an approximation radius $\rho(k)$.

\begin{proposition}
 \label{ro}
Let $\rho(k)=\frac{1}{4k}a(k)^{-2}$. 
For all $k$ large, we have:
 \begin{itemize}
  \item[] (i) $U(\mu_k,\rho(k)) \subset \p$.
  \item[] (ii) $d(\mu_k,\nu)<\eps/2$ for any  $\nu \in U(\mu_k,\rho(k))$.
  \item[] (iii) $\l| \int \log|1-z|d\mu_k - \int  \log|1-z|d\nu(z) \r| <1/k$ for each $\nu \in U(\mu_k,\rho(k))$.
  \item[] (iv) $\rho(k) < \frac{1}{k} \cdot \min_{i\neq j} |z_i -z_j|.$
\end{itemize}
\end{proposition}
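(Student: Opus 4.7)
The plan is to verify each of the four items essentially by unpacking the definitions, leveraging the quantitative properties (iii), (v), (vi) of Proposition \ref{stage1-1} together with the regularity built into membership of $\d$. Items (i) and (iv) reduce to arithmetic comparisons between $\rho(k)=\frac{1}{4k}a(k)^{-2}$ and the already established scales $\frac12 a(k)^{-2}$ and $a(k)^{-2}$ respectively; items (ii) and (iii) are perturbative statements that follow from the smallness of $\rho(k)$ together with elementary regularity.

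For (i), since $k\ge 1$ one has $\rho(k)\le \frac12 a(k)^{-2}$, so $U(\mu_k,\rho(k))\subset U(\mu_k,\frac12 a(k)^{-2})\subset \p$ by property (vi) of Proposition \ref{stage1-1}, valid for $k$ large. For (iv), property (v) of the same proposition says $\min_{i\ne j}|z_i-z_j|\ge a(k)^{-2}$, so $\rho(k)=\frac{1}{4k}a(k)^{-2}<\frac{1}{k}a(k)^{-2}\le \frac{1}{k}\min_{i\ne j}|z_i-z_j|$.

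For (ii), any $\nu\in U(\mu_k,\rho(k))$ is an atomic measure with the same number of atoms and equal masses as $\mu_k$, and with a bijection between atoms that moves each one by at most $\rho(k)$. Since the metric $d$ is compatible with the weak topology (take, e.g., the L\'evy or bounded-Lipschitz metric), this implies $d(\mu_k,\nu)\le C\rho(k)$ for a universal constant $C$. Because $\rho(k)\to 0$ as $k\to\infty$, we get $d(\mu_k,\nu)<\eps/2$ for all $k$ sufficiently large (depending on $\eps$).

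For (iii), the crucial point is that $\mu\in\d$ has support disjoint from a cone with apex at $0$ and axis $\R_+$ (property (b) of $\d$), so $\supp(\mu)$ stays bounded away from the point $z=1$. By property (iii) of Proposition \ref{stage1-1}, every atom of $\mu_k$ lies within distance $1/a(k)$ of $\supp(\mu)$, so for $k$ large the atoms of $\mu_k$ together with their $\rho(k)$-neighborhoods remain inside a fixed compact set $K\subset\C\setminus\{1\}$ on which $z\mapsto \log|1-z|$ is Lipschitz with constant $L=L(\mu)$. Pairing atoms of $\mu_k$ with those of $\nu\in U(\mu_k,\rho(k))$ via the natural bijection and applying this Lipschitz bound termwise gives
\[\l|\int \log|1-z|\,d\mu_k(z) - \int \log|1-z|\,d\nu(z)\r| \le L\,\rho(k),\]
which is smaller than $1/k$ for $k$ large since $\rho(k)=\frac{1}{4k}a(k)^{-2}\to 0$. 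No step is a genuine obstacle: the proposition is essentially a bookkeeping lemma that repackages the quantitative content of Proposition \ref{stage1-1} into the exact form needed for the perturbative density lower bound to come in Stage III.
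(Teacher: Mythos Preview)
Your proof is correct and follows the same approach as the paper, which simply remarks that the properties follow easily from the definition of $\mu_k$, of the neighbourhoods $U$, and from the minimal separation bound $a(k)^{-2}$ of Proposition \ref{stage1-1}. You have merely supplied the routine details behind that one-line justification.
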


\begin{proof}
 These properties follow easily from the definition of the measure $\mu_k$, that of the neighbourhoods $U$, and the fact that the minimal separation between two distinct atoms of $\mu_k$ is at least $a(k)^{-2}$.
\end{proof}

\begin{definition}
 Let $\nu$ be an atomic probability measure with $l$ atoms $\{\nu_1,\cdots, \nu_l\}$ and equal mass $1/l$ on each. Then by $P_{\nu}$ we denote the monic polynomial \[P_{\nu}(z)=\prod_{j=1}^l(z-z_j).\]
\end{definition}

Before stating the next proposition, we recall the notation that $pol_+$ denotes the set of all polynomials with positive coefficients.

\begin{proposition}
Let $\mu_k$  be as in Proposition \ref{poly} and $\rho(k)$ be as in Proposition \ref{ro}. 
\begin{itemize}
 \item[] \text{(i)} There exists a positive integer $m(k)$ such that for any $\nu \in U(\mu_k,\rho(k))$, we have $P_{\nu}^{m} \in pol_+$ for all $m \ge m(k)$.
  
 \item[] \text{(ii)} There exists $\t(k)>0$ such that for any $\nu \in U(\mu_k,\rho(k))$, the following is true: for every  $t \ge m(k)$ and distinct measures $\{\nu(i)\}_{i=1}^t \subset U(\nu,\t(k))$, we have $\prod_{i=1}^t P_{\nu(i)} \in pol_+$. We can find such $\t(k)$ so that $\rho(k)/\t(k)$ is an even integer.
\end{itemize}
\end{proposition}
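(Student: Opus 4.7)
The plan is to prove both parts via a saddle-point analysis of the contour integral
\[
[z^j]Q=\frac{1}{2\pi r^j}\int_0^{2\pi}Q(re^{i\theta})e^{-ij\theta}\,d\theta,
\]
applied to $Q=P_\nu^m$ in (i) and to $Q=\prod_{i=1}^t P_{\nu(i)}$ in (ii). In both cases the key ingredient is the \emph{strict} Bergweiler--Eremenko inequality $\LL_{\widehat\nu}(z)<\LL_{\widehat\nu}(|z|)$ for $z\notin\R_+$, where $\widehat\nu$ is the empirical measure of zeros of $Q$; once this holds, the modulus $|Q(re^{i\theta})|$ achieves its unique maximum on each circle $|z|=r$ at $\theta=0$, and at the saddle radius $r_j$ solving $r_jQ'(r_j)/Q(r_j)=j/\deg Q$, Laplace's method yields positivity of $[z^j]Q$ for $\deg Q$ large.

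For (i), I would first verify that $\mu_k\in\wt{\mm}(K)$ for a compact $K$ matching the setup of Section \ref{meastop}. The strict B--E inequality for $\mu_k$ comes from $P_k=P_{\mu_k}^{h(k)}\in pol_+$ (Proposition \ref{stage1-1}(i)) together with the strict triangle inequality $|P_k(z)|<P_k(|z|)$ for $z\notin\R_+\cup\{0\}$; the asymptotic expansions near $0$ and $\infty$ are elementary for an atomic measure with support bounded away from $\R_+$ and from $\{0,\infty\}$, and the coefficients $a,b$ are positive for $k$ large by closeness of $\mu_k$ to $\mu\in\d$. Theorem \ref{convergence1} then provides a neighborhood of $\mu_k$ in $\mm^{\mathrm{Sym}}(K)$ lying inside $\wt{\mm}(K)$, which (for $\rho(k)$ small enough) contains $U(\mu_k,\rho(k))$. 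Hence for each $\nu$ in that set and each $j\in\{0,\dots,ma(k)\}$, Laplace's method on the saddle circle gives $[z^j]P_\nu^m>0$ for $m\geq m(k)$, with the required $m(k)$ uniform in $(\nu,j)$ by compactness of the (finite-dimensional) parameter set.

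For (ii), let $\bar\nu=\frac{1}{ta(k)}\sum_{i,j}\delta_{\nu(i)_j}$ denote the empirical measure of zeros of $Q=\prod_{i=1}^t P_{\nu(i)}$, a monic polynomial of degree $ta(k)$. Pairing each atom of $\bar\nu$ with the nearest atom of $\nu$ bounds the Wasserstein-$1$ distance $W_1(\bar\nu,\nu)\leq\t(k)$, \emph{independently of $t$}. For $\t(k)$ below a threshold that is uniform in $\nu\in U(\mu_k,\rho(k))$ (obtained from the relative openness in Theorem \ref{convergence1}, applied over the compact set of admissible $\nu$'s), the measure $\bar\nu$ lies in $\wt{\mm}(K)$ and inherits strict B--E. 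The saddle-point argument from (i), now applied to $Q$ with $\bar\nu$ in the role of the empirical measure, yields positivity of all coefficients of $Q$ whenever $\deg Q=ta(k)\geq m(k)a(k)$, i.e., $t\geq m(k)$. The parity condition $\rho(k)/\t(k)\in 2\Z$ is enforced by further shrinking $\t(k)$ at the end.

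The main obstacle will be securing the uniform Laplace estimate, especially for (ii) where the polynomial degree $ta(k)$ grows with $t$. The resolution is that the data driving the Laplace asymptotics---a lower bound on $-\partial_\theta^2\log|Q(re^{i\theta})|\bigr|_{\theta=0}$ at the saddle and upper bounds on higher $\theta$-derivatives---depend continuously on the empirical measure in the weak topology by Theorem \ref{potconv}(a), and are controlled uniformly on the open set $\wt{\mm}(K)$. Thus the weak-topology bound $W_1(\bar\nu,\nu)\leq\t(k)$, independent of $t$, is precisely what is needed to run the saddle-point analysis uniformly in $t$; the exploding number of atoms of $\bar\nu$ plays no role once the strict B--E inequality is in hand.
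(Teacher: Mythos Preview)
Your outline is essentially correct but takes a harder route than the paper. The paper treats De Angelis' theorem (Theorem~\ref{dat}) as a black box rather than re-deriving it via saddle-point analysis. For (i), each $\nu\in U(\mu_k,\rho(k))\subset\p$ gets an exponent $m(\nu)$ directly from De Angelis; continuity of the coefficients of $P_\gamma^t$ in the atoms of $\gamma$ then propagates this to a neighbourhood $U(\nu,\delta_\nu)$ for each $t\in\{m(\nu),\dots,2m(\nu)-1\}$, and hence for all $t\ge m(\nu)$ by writing $t$ as a multiple of $m(\nu)$ plus a remainder; finally, compactness of $\overline{U(\mu_k,\rho(k))}$ in the finite-dimensional topology on $a(k)$-tuples yields a uniform $m(k)$. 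For (ii) the paper again avoids any asymptotics: since $P_\nu^s\in pol_+$ for each $s\in\{m(k),\dots,2m(k)-1\}$ by (i), continuity of the coefficients of a degree-$sa(k)$ product in its $sa(k)$ roots gives a $\delta_\nu(s)$ so that $\prod_{i=1}^s P_{\nu(i)}\in pol_+$ whenever all $\nu(i)\in U(\nu,\delta_\nu(s))$; setting $\delta_\nu=\min_s\delta_\nu(s)$ handles arbitrary $t\ge m(k)$ by partitioning $\{1,\dots,t\}$ into blocks of sizes in $[m(k),2m(k)-1]$ and using that a product of $pol_+$ polynomials is again in $pol_+$; a second compactness pass over $\overline{U(\mu_k,\rho(k))}$ yields the uniform $\t(k)$.

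Your approach instead carries the Laplace machinery throughout, which obliges you to control the saddle-point error uniformly over all coefficient indices $j$ (including the regimes $r_j\to 0$ and $r_j\to\infty$, where the expansions near $0$ and $\infty$ in the definition of $\wt{\mm}(K)$ must be invoked) and over the unbounded-in-$t$ family of empirical measures $\bar\nu$. This can be made to work via the potential-theoretic continuity in Theorem~\ref{potconv} and the relative openness in Theorem~\ref{convergence1}, as you sketch, but the paper's argument sidesteps all of it by leaning on the multiplicative closure of $pol_+$ and purely finite-dimensional continuity of coefficients in roots. What your route buys is a self-contained treatment not relying on De Angelis; what the paper's route buys is brevity and the avoidance of any uniform asymptotic estimates. (Minor correction: the saddle equation should read $r_jQ'(r_j)/Q(r_j)=j$, not $j/\deg Q$.)
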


\begin{proof}
 Since $U(\mu_k,\rho(k)) \subset \p$, for every 
 $\nu \in U(\mu_k,\rho(k))$ we have an integer $m=m(\nu)$ such that $P_{\nu}^t \in pol_+$ for all $t \ge m$, by De Angelis' Theorem 
 (\cite{DA}, quoted as Theorem \ref{dat} in the Appendix and as Theorem A in \cite{BES}).

Recall that all measures in $U(\mu_k,\rho(k))$ possess $a(k)$ distinct 
atoms, belong to $\mm^{\mathrm{Sym}}$, and possess the same number of
real atoms. Therefore, we can equip
$U(\mu_k,\rho(k))$ with the induced topology in $\R^{a(k)}$; in the rest of 
this proof, we work with this topology.

For any $t \ge m$,  one can find a small 
neighbourhood $U(\nu,\del_\nu(t))$ such that $P_\gamma^t \in pol_+$ 
for every $\gamma \in U(\nu,\del_\nu(t))$, 
because the coefficients of a monic polynomial depend continuously 
on its roots. If we define $\del_\nu=\min\{\del_\nu(m),\cdots,
\del_\nu(2m-1)\}$, then any $\gamma \in U(\nu,\del_\nu)$ 
satisfies $P_\gamma^t \in pol_+$ for every 
$t \ge m(\nu)$ (this can be seen by expressing $t$ as a 
multiple of $m(\nu)$ plus a remainder). Now, $\ol{U(\mu_k,\rho(k))}$ 
is a compact set which is covered by such $U(\nu,\del_\nu)$-s, 
hence it admits a finite subcover $\{U(\nu(i),\del_{\nu(i)})\}_{i=1}^M$. 
Now $m(k)=\max\{m(\nu(i));i=1,\cdots,M\}$ suffices for part (i).  
 
Proceeding on similar lines to part (i), for every $\nu \in
U(\mu_k,\rho(k))$, we can obtain a neighbourhood $U(\nu,\del_\nu)$ such 
that for every  $t \ge m(k)$ and distinct measures 
$\{\nu(i)\}_{i=1}^t \subset U(\nu,\del_\nu)$, we have 
$\prod_{i=1}^t P_{\nu(i)} \in pol_+$. Indeed,
since $P_{\nu}^{s} \in pol_+$,
for each $s \ge m(k)$, by the continuity of the 
coefficients in the zeros,
we can get a neighbourhood $\del_\nu(s)$ that works for that $s$; since
this is true for each $m(k)\le s \le 2m(k)-1$, take the minimum over $s$ 
in this range, and observe that one can write
any larger $t$ as a multiple of $m(k)$ plus a remainder . 
 
 Now, we can cover the compact set $\ol{U(\mu_k,\rho(k))}$
 with the neighbourhoods $U(\nu,\del_\nu/2)$, therefore there is a finite subcover $\{U(\nu(i),\del_{\nu(i)}/2)\}_{i=1}^M$. Then \[\t(k)=\min\{\del_{\nu(1)}/2,\cdots,\del_{\nu(M)}/2\}\] suffices for part (ii).
\end{proof}

\begin{definition}
 \label{subdiv}
 Let $\{z_1,\cdots,z_{a(k)}\}$  be the atoms of $\mu_k$. For $z_i, 1\le i \le a(k),$ define $B^i$ to be the closed $L^{\infty}$ ball of radius $\rho(k)$ around $z_i$. Divide each such $B^r$ into $s(k)^2=\l(\rho(k)/\t(k) \r)^2$ squares $\{B_{ij}^r\}_{i,j=1}^{s(k)}$ of sidelength $\t(k)$ each such that any two such squares overlap at most at the boundaries, and their sides are parallel to the axes. Let $z_{ij}^r$ be
 the center of $B_{ij}^r$, and note that $z_{ij}^r$ is not real.
\end{definition}


\begin{definition}
 \label{gms}
 Let $\gamma_{ij}$ denote the measure $\sum_{r=1}^{a(k)} \frac{1}{a(k)} \del_{z_{ij}^r}$.
 \end{definition}

 Let $m=\lfloor n / a(k)s(k)^2 \rfloor$.
Define the set of atomic measures 
\[S^{(m)}(P_k):= 
\bigg\{\nu: \frac{1}{ms(k)^2}\sum_{l=1}^{m}\sum_{i,j=1}^{s(k)}\nu_{ij}(l); 
\, \, 
\nu_{ij}(l) \in U(\gamma_{ij},\t(k)) \, \text{with distinct, 
non-real atoms } \bigg\} .\]

Fix a bounded interval $\I$ of length $<1$ on the negative real  line  
such that $\I$ is also bounded away from the support of 
$\mu_k$ by a distance $\ge 2$. Define the set of measures 
\[ \Upsilon^{(m)} :=\l\{\frac{1}{n-ma(k)s(k)^2} 
\sum_{i=1}^{n-ma(k)s(k)^2}\del_{\beta_i}: 
\beta_i \text{ are distinct numbers } \in \I   \r\}.\]

Finally, define $\n_n(\eps,k)$ to be the set of measures 
\[\n_n(\eps,k):=\l\{\frac{ma(k)s(k)^2}{n}\nu_1 + \l( 1- \frac{ma(k)s(k)^2}{n}\r)\nu_2: 
\nu_1 \in S^{(m)}(P_k), \nu_2 \in \Upsilon^{(m)} \r\}.\] 
Note that all measures in $\n_n(\eps,k)$ possess precisely $(n-ma(k)s(k)^2)$
real atoms.
Since $0\le n-ma(k)s(k)^2 \le a(k)s(k)^2$ (which is fixed since we are considering 
$k$ to be fixed), for large enough $n$ we have 
$d(\mu_k,\nu)<\eps$ for all measures $\nu \in \n_n(\eps,k)$.
Therefore,
\begin{equation} \label{ineqk2}
 \varliminf_{n \to \infty} \frac{1}{n^2}\log \P_n\l( d(L_n,\mu_k) \le \eps
 \r) \ge  \varliminf_{n \to \infty} \frac{1}{n^2}\log \P_n
 \l(L_n \in  \n_n(\eps,k) \r). 
\end{equation}

\begin{remark}
 \label{struct}
Each $\nu \in \n_n(\eps,k)$ has the following structure:
 \begin{itemize}
 \item $\nu$ has $n$ distinct atoms, each having equal mass $1/n$.
 \item The atoms of $\nu$ are the disjoint
         union of $(a(k)+1)$ subsets as follows:
 \begin{itemize}
  \item $\L^i(\nu):=\{w:w \text{ an atom of } \nu, \|z_i-w\|_{\i}\le \rho(k) \}, 1\le i \le a(k) $, with $|\L^i(\nu)|=ms(k)^2$ for each $i\in \{1,\cdots,a(k)\}$.
  \item $\L^0(\nu):=\{w: w \text{ an atom of } \nu, w \in \I \}$ with $|\L_0(\nu)|=n-ms(k)^2a(k)$. 
 \end{itemize}
 \item In each $\L^r(\nu), r \ge 1$, the atoms are the union of $s(k)^2$ disjoint subsets $\{\L_{ij}^r\}_{i,j=1}^{s(k)}$, with $z \in \L_{ij}^r$ if and only if $\|z-z_{ij}^r\|_{\i}\le \rho(k)/2s(k)$.
\end{itemize}
Conversely, every collection of $n$ points satisfying the above structure 
has the property that the corresponding empirical measure is 
in $\n_n(\eps,k)$.
\end{remark}

For each $\nu \in \n_n(\eps, k)$, we define the atomic measures 
\[\nu(r) := \frac{1}{|\L^r(\nu)|}\sum_{w \in \L^r(\nu)} \del_{w} \,,\quad
0\le r \le a(k) \]
and 
\[\nu(r;i,j):=  \frac{1}{|\L^r_{ij}(\nu)|}\sum_{w \in \L^r_{ij}(\nu)} \del_{w} 
\,,\quad 1\le r \le a(k), \,1\le i,j \le s(k)\,.\]

\noindent 
\textbf{Stage III: A good subset $\wn_n(\eps,k) \subset \n_n(\eps,k)$ 
and completion of the proof  of Proposition \ref{prop-LB}. }

\noindent
We introduce a subset
$\wn_n(\eps,k) \subset \n_n(\eps,k)$, and estimate
its volume in Proposition \ref{ngood}. We then use the estimate
to complete the proof of Proposition \ref{prop-LB}. We recall that for a set $B$, the quantity $\G(B)$ denotes the logarithmic energy of the uniform measure on $B$ (when it exists).

\begin{definition}
\label{ntilde}
For $1\le i,j \le s(k)$ and $1\le r \le a(k)$, recall that $B_{ij}^r$ is the $L^{\i}$-ball 
of radius $\t(k)$ centered at 
$z_{ij}^r$ and $B^r=\cup_{i,j=1}^{s(k)^2}B^r_{ij}$. 
Define $B^0$ to be the interval $\I$.

Define the set of atomic measures 
$\wn_n(\eps,k) \subset \n_n(\eps,k)$ as follows: 
\[\wn_n(\eps,k):=\bigg\{\nu \in \n_n(\eps,k) :  \S(\nu(r)) > 2\Sigma(B^r) 
 \text{ for each } 0 \le r \le a(k) \bigg\}.\]
By an abuse of notation, we also denote by $\wn_n(\eps,k)$ 
the subset of $\C^q \times \R^{n-2q}$ induced by
the atoms of  measures $\nu \in \wn_n(\eps,k)$ in the manner 
described in the introduction.
\end{definition}
Note that $B^r_{ij}\cap \R=\emptyset$ for all $r,i,j$.

With this definition, we have the following proposition, whose proof
is given at the end of this subsection.
In the statement of the proposition, 
by $\text{Volume}$ we mean the Euclidean volume.
\begin{proposition}
 \label{ngood}
 With notation as above, we have
\begin{itemize}
        \item[(i)] \text{Volume}($\wn_n(\eps,k)$)$\ge 
                \frac{1}{2^{a(k)+1}}\t(k)^{ma(k)s(k)^2}|\I|^{a(k)s(k)^2}$.
        \item[(ii)] For each $\nu \in \wn_n(\eps,k)$, 
                we have \[\S(\nu) \ge \S(\mu_k) + \log|1-\frac{2}{k}| +  
                \frac{2 \log \rho(k)}{a(k)} + \frac{C(k)}{n^2} \] 
        \item[(iii)] $\l| \int \log|1-z|d\mu_k - 
                \int  \log|1-z|d\nu(z) \r| <1/k$ for each 
                $\nu \in \wn_n(\eps,k)$. 
\end{itemize}
 \end{proposition}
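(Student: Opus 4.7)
The three estimates are all obtained by exploiting the block decomposition of atoms of $\nu$ into the groups $\L^r(\nu)$, $0\leq r\leq a(k)$, provided by Remark \ref{struct}.

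For part (i), the parametrization space for $\n_n(\eps,k)$ factors over these groups. After accounting for the conjugation symmetry of $\nu$, each non-real atom in a subsquare $B^r_{ij}$ of side $\t(k)$ contributes a 2-D area of $\t(k)^2$ and each real atom on $\I$ contributes a length of $|\I|$, giving total Euclidean volume at least $\t(k)^{ma(k)s(k)^2}|\I|^{|\L^0|}\geq \t(k)^{ma(k)s(k)^2}|\I|^{a(k)s(k)^2}$ (using $|\I|<1$ and $|\L^0|\leq a(k)s(k)^2$). To restrict to $\wn_n$, I impose $\S(\nu(r))>2\G(B^r)$ for each $r$. Since each $B^r$ has diameter $\leq 2\sqrt{2}\rho(k)<1$, every pair gives $\log|w-w'|\leq 0$, hence $-\S(\nu(r))\geq 0$ a.s.\ under uniform sampling in $(B^r)^{N_r}$. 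Fubini gives $\E[\S(\nu(r))]=(1-1/N_r)\G(B^r)$, and Markov then yields $P(\S(\nu(r))\leq 2\G(B^r))\leq (1-1/N_r)/2<1/2$. Since atoms in distinct $B^r$ (modulo conjugation) are sampled independently, the joint good event has probability at least $(1/2)^{a(k)/2+1}\geq (1/2)^{a(k)+1}$. The interval $\I$ is handled analogously, and (i) follows.

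For part (ii), I split
\[
n^2 \S(\nu) = \sum_r \sum_{w\neq w'\in\L^r}\log|w-w'| + \sum_{r\neq r'}\sum_{\substack{w\in\L^r\\ w'\in\L^{r'}}}\log|w-w'|.
\]
In the between-group piece with $r,r'\geq 1$, the triangle inequality combined with Proposition \ref{ro}(iv) ($\rho(k)<(1/k)|z_r-z_{r'}|$) gives $\log|w-w'|\geq \log|z_r-z_{r'}|+\log(1-2/k)$; summing and dividing by $n^2$ (using $ms(k)^2a(k)/n=1-O(a(k)s(k)^2/n)$) yields $\S(\mu_k)+\log(1-2/k)$ up to $n$-vanishing remainders. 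Between-group terms with $r=0$ are non-negative since $\I$ lies at distance $\geq 2$ from $\supp(\mu_k)$. The within-group piece for $r\geq 1$ uses the defining $\wn_n$ condition and the scaling $\G(B^r)=\log\rho(k)+O(1)$ to produce $\frac{2\log\rho(k)}{a(k)}$ after summation and normalization; the $r=0$ within-group contribution is $O(1/n^2)$ for fixed $k$ since $|\L^0|\leq a(k)s(k)^2$. All remaining $n$-decaying and $k$-dependent error terms are absorbed into $C(k)/n^2$. For part (iii), expand $\int\log|1-z|d\nu$ group by group; since $\supp(\mu)$ is bounded away from $1$ by property (b) of $\d$ and atoms of $\mu_k$ lie within $1/a(k)$ of $\supp(\mu)$ by Proposition \ref{stage1-1}(iii), one has $|1-z_r|\geq c(\mu)>0$ uniformly, so $|\log|1-w|-\log|1-z_r||=O(\rho(k))$ for $w\in\L^r$. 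Combined with $|\L^r|=ms(k)^2$, $ms(k)^2 a(k)/n=1-O(1/n)$, and boundedness of $\log|1-w|$ on $\I$, the total error is $O(\rho(k))+O(1/n)<1/k$ for $k,n$ large, since $\rho(k)=\frac{1}{4ka(k)^2}$.

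The main obstacle is the bookkeeping in part (ii): there are $O(a(k)^2)$ pairs contributing to the between-group sum and $a(k)+1$ groups within, and one must track how the various small approximation errors (deviation of $(ms(k)^2 a(k)/n)^2$ from unity, the additive constant in $\G(B^r)=\log\rho(k)+O(1)$, and the cross-terms involving $\L^0$) aggregate into an admissible remainder. The crucial input making this feasible is the uniform-in-pair bound of Proposition \ref{ro}(iv), which makes the dominant between-group approximation $\log|w-w'|\approx \log|z_r-z_{r'}|$ essentially tight across all $r\neq r'$.
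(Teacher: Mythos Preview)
Your overall approach matches the paper's: the within-/between-group decomposition for (ii), the Lipschitz estimate for (iii), and the Markov bound on the nonpositive energy for (i). Parts (ii) and (iii) are fine.

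There is, however, a genuine gap in part (i). You correctly compute the total volume of $\n_n(\eps,k)$ using the subsquare structure (each atom confined to its assigned $B^r_{ij}$), but then you run the Markov argument ``under uniform sampling in $(B^r)^{N_r}$'' and invoke $\E[\S(\nu(r))]=(1-1/N_r)\G(B^r)$. That identity is only valid for i.i.d.\ uniform sampling from the whole square $B^r$, which does \emph{not} correspond to the parametrization of $\n_n$: there the sampling is stratified, with exactly $m$ atoms in each subsquare $B^r_{ij}$. A probability bound under the unrestricted uniform measure on $(B^r)^{N_r}$ gives you a volume fraction in the wrong ambient space and says nothing about the volume fraction inside the stratified product $\prod_{i,j}(B^r_{ij})^m$ that parametrizes $\n_n$; so the final multiplication ``probability $\times$ Vol($\n_n$)'' is not justified. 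The fix (and this is what the paper does) is to do Markov under the stratified measure: writing $\gamma=q^{-1}\sum_i\gamma^i$ with $\gamma^i$ consisting of $m$ i.i.d.\ uniforms in $D_i=B^r_{ij}$ and $U=q^{-1}\sum_i U_i$ the uniform measure on $B^r$, one has
\[
\E[\S(\gamma)]=\tfrac{1}{q^2}\sum_i(1-\tfrac1m)\G(U_i)+\tfrac{1}{q^2}\sum_{i\neq j}\G(U_i,U_j)>\G(U),
\]
since all $\G(U_i)<0$; then Markov applied to $-\S(\gamma)\ge0$ gives $P(\S(\gamma)>2\G(U))>1/2$ under the \emph{correct} measure, and the volume bound follows as you wrote.
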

 We can now complete the proof of Proposition \ref{prop-LB}.
\begin{proof}[Proof of 
                Proposition \ref{prop-LB}.] 
In what follows, we will use the notation $\wn_n$ to denote 
$\wn_n(\eps,k)$, unless explicitly mentioned otherwise.
We have
\begin{eqnarray*}
&&       P_n \l( \wn_n \r) 
         \geq \frac{1}{{\mathcal Z}_{n,q}}\times \\
         &&
\int_{\wn_n}  \exp \l\{\frac{n^2}{2} \l( \S(\w) - 2\frac{n+1}{n^2} 
\sum_{i=1}^n \log|1-w_i| \r) \r\} \prod_{i=1}^{q}
d\el(w_i) 
\prod_{j=0}^{n-2q-1}d\ell(w_{N-j}), 
\end{eqnarray*}
where
we recall that
each measure in $\wn_N$ has $n-2q$ real atoms, and $\w$ 
is the empirical measure corresponding to the set of atoms $\{w_i\}_{i=1}^n$.

By  Proposition \ref{ngood}, the exponent in the last integral is 
lower bounded (uniformly over all measures in $\wn_n$) by 
\begin{align*}&\mathcal{E}_n = \frac{n^2}{2} \times \\  &  \l(\S(\mu_k) + \log|1-2/k| +  
\frac{2 \log \rho(k)}{a(k)} + \frac{C(k)}{n^2} -(1+\frac{1}{n}) 
\cdot 2\int \log|1-z|d\mu_k(z) -\frac{2}{k}(1+\frac{1}{n}) \r) . \end{align*} 
Hence, 
\[ \P_n( \wn_n) \ge \exp \l(o(n^2)+
\mathcal{E}_n \r) \text{Volume}(\wn_n)  .\]
From Proposition \ref{ngood} it follows that $ \frac{1}{n^2}\log
\text{Volume}(\wt{\n}_n) \to 0$ as $n \to \infty$.
Therefore,
\begin{align*}
 & \varliminf_{n \to \infty}  \frac{1}{n^2}{\log  \P_n(\wn_n(\eps,k))} \\
& \ge \frac{1}{2}\l(\S(\mu_k) + \log|1-2/k| +  
\frac{2\log \rho(k)}{a(k)} - 2\int \log|1-z|d\mu_k(z) -\frac{2}{k} \r) .
\end{align*}
Since $\rho(k)=a(k)^{-2}/4k$, we obtain (\ref{ineqk1}).
Proposition \ref{stage1-2} now implies that the proof of (\ref{lowb}) is complete.
\end{proof}

We finally complete the proof of 
Proposition \ref{ngood}. We need the following notion.
\begin{definition}
The mutual energy of
$\nu^1,\nu^2 \in \M_1(\C)$ is defined as
$\Sigma(\nu^1,\nu^2)=\int \int \log |z-w| d\nu^1(z) d\nu^2(w)$, whenever the integral is well defined and finite.

For two atomic measures $\nu^1$ and $\nu^2$ with 
disjoint atoms (having equal mass) 
$\{\nu_1^1,\cdots,\nu_s^1\}$ and $\{\nu_1^2,\cdots,\nu_t^2\}$, define 
\[ \S(\nu^1,\nu^2)=\frac{1}{st}\sum_{i=1}^s\sum_{j=1}^t \log|\nu^1_i - \nu^2_j|. \]
\end{definition}

\begin{proof}[Proof of Proposition \ref{ngood}.]
We begin with part (i). Recall that for $i=1,\ldots,a(k)$,
$B^i$ does not intersect the real axis, and therefore the same is true of the sets $B^r_{ij}$.
Recall also that for all $r,i,j$, the cardinalities $|\L^r_{ij}|=m$ are all equal, so are $|\L^r(\nu)|=ms(k)^2=:p$ for all $r \ge 1$ . 


Let a p-tuple $\gamma=(\gamma_1,\cdots,\gamma_p)$ be sampled from $B^r$ as follows. Enumerate the $\{B_{ij}^r\}$-s in an arbitrary linear order as $D_1,\cdots,D_q$, where $q=s(k)^2$ (therefore, we have the relation $p=mq$; of course $p$ and $q$ both depend on $k$). Divide the co-ordinates of $\gamma$ into $q$ consecutive blocks $\{\b_i\}_{i=1}^q$ of size $m$ each. We sample $\gamma$ by sampling $m$ points uniformly at random from $D_i$ to constitute the block $\b_i$, the sampling being independent across all co-ordinates of $\gamma$. We will denote the sub-vector corresponding to the co-ordinates $\b_i$ by the symbol $\gamma^i$.   

By abuse of notation,
we still use
$\gamma$ to denote
the atomic measure $\frac{1}{p} \sum_{j=1}^p \del_{\gamma_j}$ and $\gamma^i$ to denote the atomic measure $\frac{1}{m}\sum_{j \in \b_i} \del_{\gamma_j}$. 


Observe that as probability measures, we have $\gamma=\frac{1}{q}\sum_{i=1}^q \gamma^i$. Let $U_i$ be the uniform measure on $D_i$, and let $U$ be the uniform measure on $B^r$. Then $U=\frac{1}{q}\sum_{i=1}^q U_i$.

Note that $\E[\S(\gamma^i)]=\l(1-1/m\r)\G(U_i)$ and $\E[\S(\gamma^i,\gamma^j)]=\G(U_i,U_j)$. Since all the energies under consideration are negative, we obtain from the 
identities in the last paragraph that $\E[\S(\gamma)]>\G(U)$, and $\S(\gamma)$ is a negative random variable a.s. So, $\E[|\S(\gamma)|] < |\G(U)|$.

Hence under a $\gamma$ randomly chosen from $B^r$ under the measure described above, we have $\P\l( |\S(\gamma)| >2 |\G(U)|  \r) < 1/2$ by Markov's inequality. But this implies that with probability $> 1/2$ (under our specific scheme of sampling $\gamma$), we have $|\S(\gamma)| < 2 |\G(U)|$, or equivalently, $\S(\gamma) > 2 \G(U)$ considering signs.

We can rewrite the last statement as:
\[ \mathrm{Volume}\{\gamma \in (B^r)^m: \S(\gamma) > 2\G(U) \} > \frac{1}{2}\prod_{i=1}^q \mathrm{Volume}(D_i)=
\frac{1}{2}\t(k)^{2q} \]

Note that $\G(U)=-\log \t(k)(1+o(1))$ as $k \to \infty$.

A similar argument with the atoms in 
$\L_0(\nu)$ for $\nu \in \n_n$ implies that 
\[\mathrm{Volume}\{\gamma \in \I^{n-ma(k)s(k)^2}: \S(\gamma)>2\G(\I)\}>
\frac{1}{2}\mathrm{Volume}(|\I|^{n - ma(k)s(k)^2}).\]
Definition \ref{ntilde} and Remark \ref{struct} now 
imply part (i) of Proposition \ref{ngood}.

We next turn to the proof of part (ii).
For $\nu \in \n_n(\eps,k)$ we have
\begin{align*} \label{const3-1}
        & \S(\nu) \\ 
       = & \sum_{i=1}^{a(k)} \frac{m^2s(k)^4}{n^2}\S(\nu(i)) + 
        \frac{(n-ma(k)s(k)^2)^2}{n^2}\S(\nu(0))+ \frac{1}{n^2}\sum_{w_i,w_j 
                \text{ not in same } \L_l} \log |w_i - w_j|. \end{align*}
Recall that $\rho(k) < \frac{1}{k} \cdot \min_{i\ne j} |z_i - z_j|$. 
This implies that for $w_i,w_j$ from $\L_\alpha,\L_\beta$ respectively with 
$\alpha  \ne \beta
\ne 0$, we have  \[(1-2/k)|z_\alpha -z_\beta|  \le |w_i-w_j| 
\le (1+2/k) |z_\alpha - z_\beta |.\] 
On the other hand, if $w_i \in \L_0$ and $w_j \notin \L_0$, then 
\[0\le \log|w_i-w_j| \le \log |1+|\I|+D+\rho(k)|,\] 
where $D$ is the diameter of the support of $\mu_k$. Hence  
\begin{align*} & \l|\frac{1}{n^2}\sum_{\text{ Exactly one of } w_i,w_j \in \L_0} 
\log |w_i - w_j| \r| \\  \le & \frac{a(k)s(k)^2(n-ma(k)s(k)^2)}{n^2}\log |1+|\I|+D+\rho(k)| 
\le C(k)/n^2\end{align*} for some function $C(k)$.  
Hence we have \begin{align*} 
        & \log |1-2/k| -\frac{C(k)}{n^2} \\ \le & \l|\frac{1}{n^2}\sum_{w_i,w_j 
                \text{ not in same } \L_l} \log |w_i - w_j|  -  
                \S(\mu_k) \r| \\ \le & \log|1+ 2/k| + \frac{C(k)}{n^2} . 
        \end{align*} This is true for every $\nu \in \n_n(\eps,k)$, 
        and therefore for every $\nu \in \wn_n(\eps,k)$.
By definition of $\wn_n$, we have $\S(\nu(i))\ge 2\G(U)\ge 2B(k)$ 
for each $\nu \in \wn_n$ and each $0 \le i \le a(k)$, where the function 
$B(k)=\min (\G(U),\G(I))$. Observe that, for a fixed interval $I$, the quantity $B(k)=\G(U)$ for large enough $k$, and therefore is equal to $\log \rho(k) (1+ o(1))$.

We thus have  
\[\sum_{i=1}^{a(k)} \frac{m^2s(k)^4}{n^2}
\S(\nu(i)) + \frac{(n-ms(k)^2)^2}{n^2}
\S(\nu(0)) \ge \frac{\log \rho(k) (1+o(1))}{a(k)} \ge \frac{2 \log \rho(k)}{a(k)}. \]  This completes the proof of part (ii) of the proposition.

Part (iii) of the proposition is immediate as
the statement holds for
all measures $\nu \in \n_n(\eps,k)$
and $\wn_n(\eps,k) \subset \n_n(\eps,k)$.
\end{proof}

\section{Proof of Proposition \ref{stage1-1}}
\label{sec-proofstage1-1}
We begin with a general approximation result.
\begin{lemma}
 \label{sampling}
 Let $\mu \in \mm^{\mathrm{Sym}}(\C)$ be of compact support and
such that 
$\Sigma(\mu)<\infty$ and $\mu$ has a bounded density 
with respect to the Lebesgue measure. Then there exists a
sequence of point configurations with distinct points and
empirical measures 
$\nu_k$ such that
\begin{itemize}
	\item[(1)] $\nu_{k}\in\mm^{\mathrm Sym}(\C)$,
                the support of $\nu_{k}$ is contained inside the 
                $1/a(k)$ thickening of the support of $\mu$, 
		and $\nu_k$ does not charge the real line.
        \item[(2)] $\nu_{k} \to \mu$ as $k \to \infty$.
        \item[(3)] $\Sigma_{\mathrm{a}}(\nu_{k}) \to \Sigma(\mu) $ 
                as $k \to \infty$.
        \item[(4)] The minimal separation between two distinct atoms of $\nu_k$ is at least $a(k)^{-2}$, where $a(k)$ is the 
                  number of atoms of $\nu_k$.
\end{itemize}
\end{lemma}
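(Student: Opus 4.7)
The plan is to construct $\nu_k$ by a random sampling scheme with conjugate pairing, and then extract a deterministic configuration by a positive-probability argument. Since $\mu$ has bounded density with respect to the planar Lebesgue measure, $\mu(\R)=0$, and by the symmetry of $\mu$ we have $\mu(\C_+)=\mu(\C_-)=1/2$. Let $\mu_+$ denote the probability measure $2\mu|_{\C_+}$ on the open upper half-plane. Fix $a(k)$ even with $a(k)\to\infty$ (e.g.\ $a(k)=2k$), let $Z_1,\ldots,Z_{a(k)/2}$ be i.i.d.\ samples from $\mu_+$, and set
\[\nu_k^{\mathrm{rand}}=\frac{1}{a(k)}\sum_{i=1}^{a(k)/2}\bigl(\delta_{Z_i}+\delta_{\bar Z_i}\bigr).\]
By construction $\nu_k^{\mathrm{rand}}\in\mm^{\mathrm{Sym}}(\C)$, is supported in $\supp(\mu)$, and almost surely has distinct non-real atoms.

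The three easier properties are dispatched as follows. For weak convergence (item (2)), the usual Glivenko--Cantelli argument applied to the i.i.d.\ sample (and its conjugates) gives $\nu_k^{\mathrm{rand}}\to\mu$ almost surely. For the minimal separation (item (4)), the bounded density of $\mu$ yields $\P(|Z_1-Z_2|<r)\le Cr^2$ with $C=C(\|\rho\|_\infty,\mathrm{diam}\,\supp\mu)$, and analogous bounds (one of them only linear in $r$) for mixed pairs $(Z_i,\bar Z_j)$ and for conjugate pairs $(Z_i,\bar Z_i)$. A union bound over the $O(a(k)^2)$ pairs of atoms of $\nu_k^{\mathrm{rand}}$ gives
\[\P\l(\min_{\ell\neq m}|w_\ell-w_m|<a(k)^{-2}\r)=O(a(k)^{-1})\to 0.\]
The support condition (1) is immediate since the atoms lie in $\supp(\mu)$ itself.

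The main step and principal difficulty is the energy convergence (item (3)). I would write $b=a(k)/2$ and partition the $2b(2b-1)$ ordered atom pairs contributing to $\S(\nu_k^{\mathrm{rand}})$ according to whether they share the same index $i$ (the conjugate pairs $(Z_i,\bar Z_i)$, which number $2b$) or come from different indices $i\neq j$ (in which case the four ordered atom pairs $(Z_i,Z_j),(Z_i,\bar Z_j),(\bar Z_i,Z_j),(\bar Z_i,\bar Z_j)$ contribute). Using the symmetry of $\mu$ and the fact that $\mu(\R)=0$, a direct computation gives
\[\E\bigl[\S(\nu_k^{\mathrm{rand}})\bigr]=\frac{b-1}{b}\,\Sigma(\mu)+\frac{1}{2b}\int\log|z-\bar z|\,d\mu(z).\]
The second term is finite because $\mu$ has bounded density on compact support (so $\int\log|\mathrm{Im}\,z|\,d\mu(z)$ converges), and it vanishes as $k\to\infty$. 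A similar (but more involved) second-moment calculation, relying on $\iint(\log|z-w|)^2\,d\mu(z)d\mu(w)<\infty$ under the same bounded-density assumption, shows $\mathrm{Var}(\S(\nu_k^{\mathrm{rand}}))=O(1/a(k))$, and Chebyshev then yields $\S(\nu_k^{\mathrm{rand}})\to\Sigma(\mu)$ in probability.

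Finally, each of the four good events---$d(\nu_k^{\mathrm{rand}},\mu)<1/k$, distinct non-real atoms, minimal separation $\ge a(k)^{-2}$, and $|\S(\nu_k^{\mathrm{rand}})-\Sigma(\mu)|<1/k$---has probability tending to $1$ as $k\to\infty$. For all $k$ large enough the intersection is nonempty, and any realization therein provides a valid deterministic $\nu_k$. The main delicacy in the whole argument is controlling the variance of $\S(\nu_k^{\mathrm{rand}})$ despite the singular kernel $\log|z-w|$; the bounded density of $\mu$ on compact support is exactly what is needed to keep all moments finite.
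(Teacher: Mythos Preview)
Your approach is correct and follows the same high-level strategy as the paper: sample i.i.d.\ from the upper-half-plane part of $\mu$, pair each sample with its conjugate, and extract a deterministic configuration by showing the good events all have probability tending to $1$. Two implementation differences are worth noting.

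First, the paper does not rely on $\mu(\R)=0$ directly; instead it samples $X_j$ from $\frac12\mu_R+\mu_u$ and then shifts, setting $Y_j=X_j+i/n$. This shift forces the atoms off $\R$ and gives a deterministic lower bound $|Y_i-\bar Y_i|\ge 2/n$ on conjugate-pair distances, which is why the support must lie only in the $1/a(k)$-thickening of $\supp(\mu)$. You avoid the shift by using the bounded-density hypothesis to get $\mu(\R)=0$, and you control the conjugate-pair distances probabilistically via the linear bound $\P(|Z_i-\bar Z_i|<r)=O(r)$. Both work; yours keeps the atoms inside $\supp(\mu)$ itself.

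Second, for the energy convergence the paper uses a truncation/first-moment scheme: it writes $f=f_M+g_M$ with $f_M$ bounded, shows $\E|\Sigma_{\mathrm a}^{g_M}(\nu_k)|\le 2\alpha(M)$ uniformly in $k$, and then runs a diagonalization in $M$ and $k$. Your second-moment route (compute $\E[\S(\nu_k)]$ and bound $\mathrm{Var}(\S(\nu_k))=O(1/a(k))$) is a legitimate and arguably cleaner alternative; the bounded compactly supported density guarantees $\iint(\log|z-w|)^2\,d\mu(z)d\mu(w)<\infty$ and $\int(\log|\mathrm{Im}\,z|)^2\,d\mu(z)<\infty$, which is exactly what you need for the U-statistic variance bound and for the diagonal (conjugate-pair) contribution.
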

\begin{proof}
For an discrete measure $\nu$ having $n$ distinct atoms and 
equal mass $1/n$ on each atom, and a function $f$, 
define $\Sigma_{\mathrm{a}}^f(\nu)=\frac{1}{n^2}\sum_{i \ne j} 
f(x_i,x_j)$ where $\{x_i\}_{i=1}^n$ are the atoms of $\nu$. 
Thus, for $f(z,w)=\log|z-w|$, we have $\Sigma_{\mathrm{a}}^f(\nu)
=\Sigma_{\mathrm{a}}(\nu)$ as defined earlier.
 
Recall that the measure $\mu$ is compactly supported and 
symmetric under conjugation. Let $\mu_R$ be $\mu$ restricted to $\R$ and let $\mu_u$ be $\mu$ restricted to the upper half plane.  Define the measures $\mu_1=\frac{1}{2}\mu_R + \mu_u$ and $\mu_2=\frac{1}{2}\mu_R + \ol{\mu_u}$ where $\ol{\mu_u}$ is the measure supported on the lower half plane and defined by $\ol{\mu_u}(A)=\mu_u({\ol{A}})$. Obviously, $\mu=\mu_1 + \mu_2$.
 
For each $n$, we will obtain conjugation symmetric 
point sets of size $2n$ whose empirical measures will approximate 
$\mu$ in the following way. First, consider $n$ i.i.d. random samples  
$\{X_1,\cdots,X_n\}$ from $\mu_1$ (to obtain random samples we
consider the probability measure obtained by appropriately 
normalizing $\mu_1$). Consider the point set $Y:=\{Y_1,\cdots,Y_n\}$ 
where $Y_j=X_j + \frac{i}{n}$ and $i$ is the imaginary unit. 
Consider the point set $Z:=Y \cup \ol{Y}$, and let $L_n=\frac{1}{2n}
\sum_{z\in Z} \delta_z$ denote the (random) empirical measure associated 
with $Z$; one has that $L_n\to\mu$ in distribution (for example, by 
an application of Sanov's theorem).

Fix a positive number $M$ (to be thought of as large).
Let $K>1$ be a bound on the diameter of the support of $\mu$.
Set $f(z,w)=\log|z-w|$. 
 Define $f_M(z,w)=f(z,w)\wedge M \vee (-M)$ and $g_M=f-f_M$.
Because  $\Sigma(\mu)<\infty$ we have that
\[\alpha(M)=\iint |g_M(z,w)|d\mu(z)d\mu(w) \to 0\; \text{as $M\to\infty$}.\]

We have that
$\frac{1}{n} \le |Y_i - \ol{Y}_i| \le K$.
Therefore, for $n>n_0(K)$,
\[\frac{1}{n^2} \l| \sum_{i=1}^n \log|Y_i - \ol{Y}_i| \r| 
\le  \log \,n/n.\] 
On the other hand, for $i \ne j$ we have $\E[|g_M(Y_i,Y_j)|]=\iint 
|g_M(z,w)|d\mu_1(z)d\mu_1(w)$ and $\E[|g_M(\ol{Y}_i,\ol{Y}_j)|]=
\iint |g_M(z,w)|d\mu_2(z)d\mu_2(w)$.
  
We next claim that, for $M>\log K$,
we have  $\E[|g_M(Y_i,\ol{Y}_j)|] \le \iint |g_M(z,w)|d\mu_1(z)d\mu_2(w)$. 
To see this, note that 
$$g_M(z,w) = \left\{\begin{array}{cl}
        0,& \mbox{\rm if} \, -M \le \log|z-w| \le M,\\
        \log|z-w| - M,& \mbox{\rm if}\,  \log|z-w| \ge M,\\
        \log|z-w| + M& \mbox{\rm  if}\, \log|z-w| \le -M.
\end{array}
\right.$$
The case $\log|z-w| \ge M$ does not arise when we consider $g_M(Y_j,\ol{Y}_j)$
because $M>\log K$. On the other hand,
\[ |Y_k - \ol{Y}_j| = |X_k -\ol{X}_j + \frac{2i}{n}| \ge |X_k -\ol{X}_j|\] 
because the $X_j$-s belong to the upper half plane.  
Thus, $\log|Y_k-\ol{Y}_j| \le -M$ implies 
$\log|X_k-\ol{X}_j| \le -M$ and on this event we have 
$\log|X_k - \ol{X}_j| +M \le \log|Y_k - \ol{Y}_j| + M$. 
Since these quantities are negative, this is equivalent to 
$|\log|Y_k - \ol{Y}_j| + M| \le |\log|X_k - \ol{X}_j| +M|$. 
This implies that \[\E[|g_M(Y_i,\ol{Y}_j)|] \le \E[|g_M(X_i,\ol{X}_j)|]=  
\iint |g_M(z,w)|d\mu_1(z)d\mu_2(w),\] as claimed.
  
We now have that
\begin{eqnarray*}
&&      \E[|\Sigma_{\mathrm{a}}^{g_M}(L_n)|] \le 
\frac{1}{4n^2}\sum_{i \ne j} \E [|g_M(Z_i,Z_j)|]\\
&=&\frac{1}{4n^2} \sum_{i\ne j} \E[|g_M(Y_i,Y_j)|] + 
\frac{1}{4n^2} \sum_{i\ne j} \E[|g_M(Y_i,\ol{Y}_j)|] \\
&&+ 
\frac{1}{4n^2} \sum_{i\ne j} \E[|g_M(\ol{Y}_i,\ol{Y}_j)|] + 
\frac{2}{4n^2} \sum_{i=1}^n \E[|g_M(Y_i,\ol{Y}_i)|]. 
\end{eqnarray*}
Thus, from the previous computations, we can choose $n_1=n_1(M)$
so that for all $n>n_1(M)$,
$$\E[|\Sigma_{\mathrm{a}}^{g_M}(L_n)|] \le 2\alpha(M)\,.$$
In particular, for fixed $\delta>0$ 
there exists $n_2(M)=n_2(M,\delta)$ so that for $n>n_2(M)$,
there exists a realization $\nu_n$ of $L_n$ such that 
$d(\nu_n,\mu)\leq \delta$ and
$|\Sigma_{\mathrm{a}}^{g_M}(\nu_n)| \le 2\alpha(M)$. 
Applying a diagonalization
argument (with $\delta\to 0$ while $M$ is kept fixed), we
find a sequence (with some abuse of notation, denoted $\nu_k$, which has $a(k)$
atoms), so that
such that $d(\nu_{k},\mu)\leq 1/k$ and  
$|\Sigma_{\mathrm{a}}^{g_M}(\nu_{k})| \le 2\alpha(M)$. 
 
Now, $\Sigma_{\mathrm{a}}(\nu_{k})=
\Sigma_{\mathrm{a}}^{f_M}(\nu_{k})+\Sigma_{\mathrm{a}}^{g_M}(\nu_{k})$. 
Since $f_M$ is bounded and continuous, we have
$\Sigma_{\mathrm{a}}^{f_M}(\nu_{k}) \to  \iint f_M(z,w)d\mu(z)d\mu(w)$ 
as $k \to \infty$. This implies that  
\[\varlimsup_{k \to \infty} |\Sigma_{\mathrm{a}}^f(\nu_{k}) -  
\iint f(z,w)d\mu(z)d\mu(w)| \le 3 \alpha(M).\] 
Applying again a diagonalization argument (this time over $M$), one obtains 
the desired convergence.  

It only remains to show that the minimal separation between distinct atoms of $\nu_k$
is at least $a(k)^{-2}$. To see this, we observe that the restriction of $\nu_k$ is a 
realization of $a(k)/2$ i.i.d. samples from $\mu$, after a translation by $i/a(k)$ which does not change their mutual separation.
Also, this translation ensures that the separation between two atoms in the upper and lower half planes is at least $a(k)^{-1}$.
For atoms in the upper half plane (a fortiori in the lower half plane) we proceed as follows. Since $\mu$ has a bounded density with 
respect to the Lebesgue measure, therefore the probability that two independent samples from $\mu$ are at a mutual distance 
$< a(k)^{-2}$ is $C a(k)^{-4}$ for some universal constant $C_1$. If it is not the case that the minimal separation between the $a(k)/2$ atoms in the upper half plane 
is $\ge a(k)^{-2}$, then at least one of the ${ a(k)/2 \choose 2 }$ pairs of atoms must violate this condition. By a union bound, 
the probability of this is at most $C_2a(k)^{-2}$ for some universal constant $C_2$. Since $a(k)\ge k$, we have $\sum_k a(k)^{-2}<\infty$. By the Borel Cantelli lemma, 
this implies that $\nu_k$ has the desired property, eventually for large enough $k$.

\end{proof}

\begin{proof}[Proof of Proposition \ref{stage1-1}.]
Let $\nu_k$ be the sequence of atomic measures constructed 
in Lemma \ref{sampling}. Note that each on the $\nu_k$s  has $a(k)$ 
distinct 
(non real) atoms, is supported within the $1/a(k)$ thickening of
the support of $\mu$, and satisfies $\S(\nu_k)\to
\Sigma(\nu_k)$. Each $\nu_k$ gives rise to a monic polynomial
$Q_k$ with distinct zeros 
(so that the zeros of $Q_k$ are precisely the atoms of 
$\nu_k$).  

Observe that the measure $\mu$ is in the class of measures $\wt{\mm}(K)$ (with $K$ being a slight thickening of the support of $\mu$ so that $K$ also includes the supports of the measures $\nu_k$ for large enough $k$). 
As a result,  Theorem \ref{convergence1} implies that $\nu_k\in\wt{\mm}(K)$,
for $k$ large enough. As a result, we have $|\LL_{\nu_k}(z)| < 
\LL_{\nu_k}(|z|) $ (for all $z\not\in \R_+$) as soon as $k$ is large enough. If we write the polynomial $Q_k$  as 
\[Q_k(z)= a_0 + a_1 z + \cdots + a_{d-1}z^{d-1}+ z^d,\]
then the inequality of potentials turns into the inequality $|Q_k(z)|<Q_k(|z|)$.
Moreover, due to conjugation-symmetry of the roots of $Q_k$ (all of which are non-real) we deduce that $a_0$ is positive, whereas the fact that $a_1^{\nu_k}>0$ (refer to Remark \ref{coeff}) implies that $a_1>0$. Similar considerations on the potential, after the transformation of potentials $u(z) \mapsto \log|z| + u(\frac{1}{z})$ (equivalently $z \mapsto \frac{1}{z}$ at the level of measures), ensures that the coefficient $a_{d-1}$ is also positive. Now we can invoke De Angelis' Theorem (\cite{DA}, quoted as Theorem \ref{dat} in the Appendix and as Theorem A in \cite{BES})
to show that for some $h(k)$ large enough, 
the polynomial $P_k=Q_k^{h(k)}$ has all coefficients 
real and positive. 

Note that the empirical measure of zeros of $P_k$ coincides
with the empirical measure of zeros of $Q_k$. Finally, the condition $U(\nu_k, \frac{1}{2}a(k)^{-2}) \subset \p$ follows from an application of Theorem \ref{convergence1} (in particular, refer to Remark \ref{impact}). This completes the proof.
\end{proof}

\section{The Bergweiler-Eremenko
        approximation and proof of
Proposition \ref{prop-era}.}
\label{erapprox}
We begin with a fixed measure $\mu \in \p$ satisfying $I(\mu)<\infty$.
Let $$ \widehat{\LL}_\mu(z)=\int_{|w|\leq 1} \log(|z-w|)d\mu(w)+
\int_{|w|>1} \log(|1-\frac{z}{w}|)d\mu(w)\,,$$
and denote by
$u(z)=\LL_{\mu}(z)=\int \log |z-w| d\mu(w)$ its 
logarithmic potential.
Because $I(\mu)<\infty$, the Bergweiler-Eremenko condition can
be written as
$u(z)\leq u(|z|)$ 
for all $z\in \C$. 

The Bergweiler-Eremenko approximation proceeds 
in five steps to construct
a sequence of approximations $\mu_i=\mu_i^\eps$,
$i=1,\ldots,5$, with $\mu_5\in\d$. 
In the $i$-th step, one starts with a measure $\mu_{i-1}$ 
(with $\mu_0=\mu$ and $u_0=u$) and constructs measures $\mu_{i} \in \p$ 
(depending on a small parameter $\eps>0$) such that 
$\mu_{i} \to \mu_{i-1}$ weakly as $\eps \to 0$. 
The measures $\mu_i$ are
defined via  subharmonic functions $u_i$, 
such that $\mu_i=(2\pi)^{-1}\Delta u_i$ in the sense of distributions.
One shows, see
Section 2 of \cite{BES}) that in each of the 5 steps, 
one has $L_{\mu_i}(z)=u_i(z)+ k_i^{\eps}$ where $k_i^{\eps}$ is a 
constant (as a function of $z$) depending on $\eps$, and
that $u_i(z)\to u_{i-1}(z)$ for each $z\in \R_+$, while in some of
the steps the above convergence will occur pointwise in $\C$. 
It is a consequence of Proposition 
\ref{convergence} below that in each of the Steps 1-5, 
$k_i^{\eps} \to 0$ as $\eps \to 0$. 
\subsection{Preliminaries}
We begin with several preliminary properties concerning the convergence of
subharmonic functions.

\begin{proposition}
\label{convergence}
Let $u_{\eps}$ be a sequence of subharmonic functions 
converging in the sense of distributions to a 
subharmonic function $u_0$ as $\epsilon\to 0$.
Assume further that $u_\eps$ converges pointwise to $u_0$ on $\R_+$.
Let $\mu_\eps, \eps\geq 0$ 
denote the Riesz measure of $u_\eps$ and assume that
$u_\eps(z)=\LL_{\mu_\eps}(z)+k_\eps$ where $k_\eps$ is independent of $z$.
Then $\mu_\eps\to \mu_0$ weakly and $k_\eps \to k_0$.
%
\end{proposition}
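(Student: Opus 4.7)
My strategy splits into three stages: (a) deduce vague convergence of $\mu_\eps$ to $\mu_0$ as a soft consequence of distributional convergence; (b) promote vague to weak convergence by establishing tightness, using the pointwise information on $\R_+$; (c) read off convergence of the constants $k_\eps$ from the pointwise convergence and weak convergence already in hand.

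For the first step, I would exploit the identity $\mu_\eps = (2\pi)^{-1}\Delta u_\eps$ in the sense of distributions, which is how the Riesz measure is defined. Since the Laplacian is continuous on $\mathcal{D}'(\C)$, the hypothesis $u_\eps \to u_0$ in $\mathcal{D}'(\C)$ at once yields $\mu_\eps \to \mu_0$ in $\mathcal{D}'(\C)$. Because the Riesz measures are positive, testing against $C_c^\infty(\C)$ extends by density to $C_c(\C)$, giving vague convergence $\int \phi\, d\mu_\eps \to \int \phi\, d\mu_0$ for every $\phi \in C_c(\C)$.

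The second step, tightness, is the real content of the proposition. The idea is to exploit the pointwise convergence on $\R_+$ together with the decomposition $u_\eps(z) = \LL_{\mu_\eps}(z) + k_\eps$, eliminating the unknown constants by subtracting values at two points. For $r_1 < r_2 \in \R_+$,
\[
u_\eps(r_2) - u_\eps(r_1) \;=\; \int \bigl[\log|r_2 - w| - \log|r_1 - w|\bigr]\, d\mu_\eps(w),
\]
so the integral on the right converges to $u_0(r_2) - u_0(r_1)$. Its integrand is bounded and continuous outside small neighborhoods of $r_1, r_2$ and tends to $0$ as $|w| \to \infty$; by varying $r_1, r_2$ appropriately (in particular letting $r_2$ be large) and combining with the vague convergence from Step 1, one extracts uniform upper bounds on the $\mu_\eps$-mass in large annuli, which is tightness.

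Finally, once tightness is established, Prohorov's theorem combined with the vague limit identifies $\mu_0$ as the weak limit. To conclude $k_\eps \to k_0$, fix any $r \in \R_+$ and write
\[
k_\eps \;=\; u_\eps(r) - \LL_{\mu_\eps}(r).
\]
The first term converges to $u_0(r)$ by hypothesis. For the second, weak convergence of $\mu_\eps$ together with uniform integrability of $\log|r-\cdot|$—at infinity from tightness and near $w=r$ from Lemma \ref{obr2}, since in the application the measures belong to $\p$ and therefore avoid $\R_+$—gives $\LL_{\mu_\eps}(r) \to \LL_{\mu_0}(r)$, hence $k_\eps \to u_0(r) - \LL_{\mu_0}(r) = k_0$.

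The main obstacle is Step 2. The distributional-to-vague passage and the extraction of $k_\eps \to k_0$ are essentially formal once weak convergence is in hand, but promoting vague to weak is genuinely delicate: distributional convergence of subharmonic functions does not itself rule out escape of mass to infinity for the associated Riesz measures, and the pointwise hypothesis on $\R_+$ must be leveraged through the logarithmic kernel in a non-automatic way.
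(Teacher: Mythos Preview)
Your outline diverges substantially from the paper's argument, and Step~(b) contains a genuine gap.

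\textbf{The paper's route.} The paper dispatches the whole proposition in three lines by appealing to standard potential theory (citing the appendix of \cite{BES}): distributional convergence of the subharmonic $u_\eps$ already yields both the weak convergence $\mu_\eps\to\mu_0$ \emph{and} the a.e.\ (Lebesgue on $\R_+$) convergence $\LL_{\mu_\eps}\to\LL_{\mu_0}$. Then one simply picks a single point $r\in\R_+$ where the latter a.e.\ convergence holds; since $u_\eps(r)\to u_0(r)$ by hypothesis, subtraction gives $k_\eps\to k_0$. No tightness argument, no uniform integrability of the log kernel, no Lemma~\ref{obr2}.

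\textbf{The gap in your Step (b).} Your proposed tightness mechanism does not work as written. The integrand
\[
\log|r_2-w|-\log|r_1-w| \;=\; \log\Bigl|1+\frac{r_2-r_1}{r_1-w}\Bigr| \;=\; O\bigl(|w|^{-1}\bigr)\quad\text{as }|w|\to\infty
\]
decays at infinity, so convergence of $\int[\log|r_2-w|-\log|r_1-w|]\,d\mu_\eps(w)$ places essentially no constraint on $\mu_\eps\{|w|>R\}$: a family with mass drifting to infinity would still produce a convergent integral. Varying $r_1,r_2$ does not help, since the decay rate is uniform. In the paper's actual application (the approximation steps of Section~\ref{erapprox}) tightness is free because all the $\mu_i^\eps$ are probability measures supported in a fixed compact set, but your argument does not recover even that.

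\textbf{Your Step (c) is also heavier than needed.} You aim to force $\LL_{\mu_\eps}(r)\to\LL_{\mu_0}(r)$ at a \emph{prescribed} $r$ via uniform integrability, invoking Lemma~\ref{obr2} near the singularity. This restricts you to $r=1$ and to measures in $\p$, and still requires justifying the tail. The paper sidesteps all of this: a.e.\ convergence of the potentials is a standard consequence of $L^1_{\mathrm{loc}}$ convergence of subharmonic functions, and one is free to pick \emph{any} point in the full-measure set where it holds.
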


\begin{proof}
By standard results, see \cite[Appendix]{BES}, one gets
the weak convergence $\mu_\eps\to\mu_0$ and the a.e. (with respect 
to Lebesgue measure on $\R_+$) convergence of $\LL_{\mu_\eps}$ to $\LL_{\mu_0}$.
In particular, the convergence $L_{\mu_\eps}(z)\to L_{\mu_0}(z)$ occurs at
a point  $z\in \R_+$. This yields the convergence $k_\eps\to k_0$.
\end{proof}

We next show that pointwise monotone convergence of subharmonic functions 
implies the convergence of the associated logarithmic energies.
Results of this nature are known in the literature; here we 
follow an approach based on the proof of a related result in \cite{Doob}.
\begin{proposition}
\label{enconv}
Suppose $\{u_n\}$ is
a sequence of subharmonic functions decreasing pointwise to a
subharmonic function $u_0$, as $n \to \infty$. 
Let $\mu_n$ be the Riesz measure of $u_n$, and assume $u_n(z)=
\LL_{\mu_n}(z)+k_n$, with $k_n \to k_0$ as $n \to \infty$. Then 
$\Sigma(\mu_n)\to \Sigma(\mu_0)$ as $n \to \infty$.
\end{proposition}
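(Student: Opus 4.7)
The plan is to reduce the statement to convergence of $\int u_n\,d\mu_n$ to $\int u_0\,d\mu_0$, and then establish matching $\limsup$ and $\liminf$ bounds via two complementary monotonicity arguments. Since the $\mu_n$ are probability measures,
$$\Sigma(\mu_n)=\int \LL_{\mu_n}\,d\mu_n = \int u_n\,d\mu_n - k_n,$$
and $k_n\to k_0$, so it is enough to show $\int u_n\,d\mu_n\to\int u_0\,d\mu_0$. By Proposition \ref{convergence}, $\mu_n\to\mu_0$ weakly.

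For the $\limsup$, I fix $m$ and exploit $u_n\le u_m$ for $n\ge m$ to get $\int u_n\,d\mu_n\le \int u_m\,d\mu_n$. Since $u_m$ is upper semicontinuous and bounded above on compact sets, the Portmanteau theorem for upper semicontinuous functions (applied after truncating $u_m$ from below at $-M$ and controlling the behavior outside a large ball via tightness) gives $\limsup_n \int u_m\,d\mu_n\le \int u_m\,d\mu_0$. Monotone convergence as $m\to\infty$ then yields $\int u_m\,d\mu_0\downarrow \int u_0\,d\mu_0$, completing this half.

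For the $\liminf$, I exploit the symmetry of $\log|z-w|$ via Fubini to write
$$\int u_0\,d\mu_n -k_0 = \int \LL_{\mu_0}\,d\mu_n = \int \LL_{\mu_n}\,d\mu_0 = \int u_n\,d\mu_0 - k_n,$$
that is, $\int u_0\,d\mu_n = \int u_n\,d\mu_0 + (k_0-k_n)$. Since $u_n\downarrow u_0$ pointwise with the common upper bound $u_1$, monotone convergence under $\mu_0$ gives $\int u_n\,d\mu_0\to\int u_0\,d\mu_0$, and combined with $k_n\to k_0$ this produces $\int u_0\,d\mu_n\to \int u_0\,d\mu_0$. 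The pointwise inequality $u_n\ge u_0$ now yields $\int u_n\,d\mu_n\ge \int u_0\,d\mu_n$, so passing to $\liminf$ gives $\liminf_n \int u_n\,d\mu_n\ge \int u_0\,d\mu_0$.

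The main obstacle is controlling the logarithmic singularities of the $u_n$ uniformly in $n$, since both the Portmanteau step and the two monotone convergence steps require an integrable majorant. This is handled by truncating $u_n$ from below at $-M$, running the bounded arguments, and sending $M\to\infty$ using the fact that in the setting where this proposition is used (the Bergweiler-Eremenko steps of Section \ref{erapprox}, where $I(\mu)<\infty$ propagates through the construction) the measures $\mu_n$ assign uniformly vanishing mass to $\{u_n<-M\}$ and the integrals over these sets vanish in the spirit of Lemma \ref{obr2}. Tightness of $\{\mu_n\}$ inherited from the weak convergence handles the behavior at infinity.
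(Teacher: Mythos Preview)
Your $\liminf$ half is essentially the paper's argument, just unfolded: the paper writes the chain
\[
\Sigma(\mu_0)=\int(u_0-k_0)\,d\mu_0\le\int(u_m-k_0)\,d\mu_0=\Sigma(\mu_m,\mu_0)+k_m-k_0\le\Sigma(\mu_m)+2(k_m-k_0),
\]
using $u_0\le u_m$ twice (the second time after the Fubini swap $\Sigma(\mu_m,\mu_0)=\Sigma(\mu_0,\mu_m)$). This gives $\liminf_m\Sigma(\mu_m)\ge\Sigma(\mu_0)$ directly, without your monotone-convergence step $\int u_n\,d\mu_0\to\int u_0\,d\mu_0$; the content is the same.

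The real divergence is in the $\limsup$ direction. The paper dispatches it in one line by invoking the (standard) lower semicontinuity of $\mu\mapsto-\Sigma(\mu)$ under weak convergence, which immediately gives $\limsup_n\Sigma(\mu_n)\le\Sigma(\mu_0)$. Your route instead fixes $m$, bounds $\int u_n\,d\mu_n\le\int u_m\,d\mu_n$, and then needs $\limsup_n\int u_m\,d\mu_n\le\int u_m\,d\mu_0$. As you correctly note, this Portmanteau-type step is not free: $u_m$ is upper semicontinuous but neither bounded below (logarithmic singularities) nor bounded above globally (growth at infinity), so both tails must be controlled uniformly in $n$. Your proposed fix---importing Lemma~\ref{obr2} and the finite-$I$ context of Section~\ref{erapprox}---turns a proof of the proposition as stated into a proof valid only in the paper's specific application. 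That is a genuine gap relative to the statement. The cleaner remedy is simply to cite the lower semicontinuity of $-\Sigma$ (which is what encapsulates exactly the uniform control you are trying to rebuild by hand), as the paper does.
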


\begin{proof}
By subtracting off $k_0$, we can assume $u_0=\LL_{\mu_0}$ and 
$k_{\eps} \to 0$. Recall
the notation $\Sigma(\mu,\nu)= \iint \log|z-w|d\mu(z)d\nu(w)$. 

  The hypotheses imply in particular that $\mu_n\to \mu_0$ weakly.
  The lower semicontinuity of $-\Sigma(\cdot)$ then implies
  that $\limsup \Sigma(\mu_n)\leq \Sigma(\mu_0)$. To see the other 
  direction, note that if either $n=0$ or $n>m$ we have
  \begin{eqnarray*}
    \Sigma(\mu_n)&=& \int \l(u_n(z) - k_n\r) d\mu_n(z) 
    = \int u_n(z) d\mu_n(z) - k_n \\
    &\le &
    \int u_m(z) d\mu_n(z) - k_n  =  \Sigma(\mu_m,\mu_n) +k_m -k_n  
    \le \Sigma(\mu_m,\mu_m) + 2(k_m-k_n)\\
    &=& \Sigma(\mu_m)+2(k_m - k_n), 
  \end{eqnarray*}
  where the monotonicity of the sequence $\{u_n\}$ was used in the 
  inequalities. 
  We conclude that $\liminf_{n\to\infty} \Sigma(\mu_n)\geq \Sigma(\mu_0)$, completing
  the proof.
\end{proof}

\subsection{Approximation steps}
We describe each of the approximation steps $\mu_i^\epsilon\to\mu_{i-1}$,
$i=1,\ldots,5$,
and show that for each, both
\begin{equation}
  \label{eq-goal}
  \LL_{\mu^\epsilon_i}(1)\to \LL_{\mu_{i-1}}(1)\,,\quad
\Sigma(\mu^\epsilon_i)\to \Sigma(\mu_{i-1}).
\end{equation}
In the sequel, we  omit the subscript $\epsilon$ when it is clear from 
the context.

\subsubsection{Step 1:}
Given $\eps>0$, 
define \[u_1(z)=\max\{ u(ze^{i\alpha}): |\alpha| \le \eps  \}.\] 
Note that $u_1\geq u$ pointwise, and that $u_1$ decreases in $\epsilon$.
It is proved in \cite[Section 2]{BES} that $u_1\to_{\eps\to 0}
u$ weakly; however, due to upper semicontinuity of $u$, this implies
also the pointwise convergence $u_1\searrow_{\eps\to 0} u$. 
An application of
Propositions \ref{convergence} and \ref{enconv} yields
\eqref{eq-goal} for $i=1$.

\subsubsection{Step 2:} For $\eps_1 \in (0,\eps)$ where $\eps$ 
is as chosen in Step 1, define
$D_{\eps_1}=\{ z: |\arg(z)|\le \eps_1\}$. 
Let $v$ denote the solution to the Dirichlet problem in $D_{\eps_1}$ 
with boundary conditions $u_1(z)$ and $v(z)=O(\log |z|)$ as $z \to \infty$. 
Define $u_2=u_2(\eps_1)$ to be the function obtained by 
``balayage'' (i.e., sweeping out of the Riesz measure) from the domain 
$D_{\eps_1}$. In other words, $u_2(z)=v(z)$ if 
$z \in D_{\eps_1}$, and $=u_1(z)$ otherwise. 
In this step too, 
it follows from \cite[Section 2]{BES} and the upper semicontinuity
of $u_1$ that $u_2\searrow_{\eps_1\to 0}
u_1$ pointwise, and that $u_2(z)<u_2(|z|)$ for $z\in\C\setminus \R_+$. 
An application of
Propositions \ref{convergence} and \ref{enconv} yields
\eqref{eq-goal} for $i=2$.

\subsubsection{Step 3:} For $\eps >0$, define $u_3(z)=u_2(z+\eps)$. 
We have $d\mu_3(z)=d\mu_2(z+\eps)$. We have that $u_3\to u_2$ pointwise on
$\R_+$ and hence Proposition \ref{convergence} yields
that $\LL_{\mu_3}(1)\to \LL_{\mu_2}(1)$. Since
$\Sigma(\mu_3)=\Sigma(\mu_2)$, we conclude that \eqref{eq-goal} holds for 
$i=3$.

\subsubsection{Step 4:}
For $\eps>0$, define $v(z)=u_3(1/z)+\log|z|$ for 
$z \ne 0$, and extend $v(\cdot)$ to $\C$ by defining 
$v(0)=\limsup_{z\to 0} v(z)$. This definition preserves the
sub-harmonicity of $v$, and in fact 
\begin{equation}
  \label{eq-v0}
  v(0)=\lim_{r\searrow 0} v(r)
\end{equation}
because $u_3(z)\leq u_3(|z|)$, hence the limsup is
attained as $r\searrow 0$, and the convexity of $v(r)$ in $\log r$ then yields
the existence of the limit. We claim that in fact,
$v(0)>-\infty$ (and therefore, by sub-harmonicity, is finite). Indeed, 
$\LL_{\mu_v}(0)=-\LL_{\mu_3}(0)$; by construction, $u_3$ is harmonic
in a neighborhood of $0$, see \cite{BES}. Hence, $\LL_{\mu_v}(0)=-\LL_{\mu_3}(0)$ is finite, as claimed.
We note in passing that $\LL_{\mu_v}(1)=\LL_{\mu_3}(1)$.

Next,
define $w(z)=v(z+\eps)$ and, finally, 
$u_4(z)=w(1/z)+\log |z|$.  As in the previous step, we 
have that $\LL_{\mu_4}(1)=\LL_{\mu_w}(1)\to_{\eps\to 0} \LL_{\mu_v}(1)$. So
it only remains to check the convergence of the
logarithmic energy. To that end, note that under the transformation
$v(z)=u(1/z)+\log|z|$, one has
\begin{eqnarray*}
  \Sigma(\mu_v)&=&
  \iint \log|z-w|d\mu_v(z)d\mu_v(w)=
  \iint \log\left|\frac1z-\frac1w\right|d\mu_u(z)d\mu_u(w)\\
  &=&
\iint  \log |z-w| d\mu_u(z) d\mu_u(w) -2 \int \log|z| d\mu_u(z) =
\Sigma(\mu_u)-2 \LL_{\mu_u}(0) . \end{eqnarray*}
With this computation in hand, we trace the changes in the logarithmic energy 
$\Sigma$ in Step 4 as follows:
\begin{eqnarray*}
  \Sigma(\mu_v)&=&\Sigma(\mu_{u_3}) -2 \LL_{\mu_3},
 \Sigma(\mu_w)=\Sigma(\mu_v), \LL_{\mu_w}(0)=\LL_{\mu_v}(\eps)\\
 \Sigma(\mu_{u_4})&=&\Sigma(\mu_w)-2 \LL_{\mu_w}(0) = 
 \Sigma(\mu_v)-2 \LL_{\mu_v}(\eps) = \Sigma(\mu_{u_3}) -2 \LL_{\mu_{u_3}}(0) -2 \LL_{\mu_v}(\eps) \\
&=&\Sigma(\mu_{u_3}) +2 \LL_{\mu_v}(0) -2 \LL_{\mu_v}(\eps). 
\end{eqnarray*}
But \[ \LL_{\mu_v}(\eps) -  \LL_{\mu_v}(0)=v(\eps) - v(0) \to 0\,,\]
  due to \eqref{eq-v0}. This completes the proof of 
\eqref{eq-goal} for Step 4.

\subsubsection{Step 5:}  
In this step, we slightly differ from the recipe of \cite{BES}.
Let $I$ denote an interval of length $\t$ (which is assumed to be small 
but fixed) centered at -1. Let $\alpha$ be the normalized 
Lebesgue measure supported on the set $I$. 
For $\eps>0$, define the measure \[\mu_5:=(1-\eps)\mu_4  + \eps \alpha.\] 
Define $u_5= \LL_{\mu_5}$. 
Via a series expansion of $\log|1+z|$ for small and large $z$, one
checks that this has the intended effect of making the coefficients 
$b$ and $c$ in Step 5 of the \cite{BES} argument positive, 
while preserving 
the inequality $u_5(z) < u_5(|z|)$ for $z\not\in \bar\R_+$. 
Thus, $\mu_5\in \d$.

To see \eqref{eq-goal} for $\mu_5$, note that
$\LL_{\mu_5}= (1-\eps)\LL_{\mu_4}  + \eps \LL_{\alpha}$ 
and \[\Sigma(\mu_5)= (1-\eps)^2\Sigma(\mu_4)+ \eps^2 \Sigma(\alpha) + 
  2\eps(1-\eps)\int \LL_{\alpha}(z)d\mu_4(z).\] Since $I$ is a bounded 
  interval, one has that
  $\LL_{\alpha}(z)$ is uniformly bounded in $z$ on any compact set, 
  in particular, on the support of $\mu_4$. 
  Letting  $\eps \to 0$, one gets \eqref{eq-goal} for $\mu_5$.
  
\subsubsection{Step 6:}  
We convolve $\mu_5$ with a measure $\eta_{\eps}$ (for small enough $\eps$) to obtain $\mu_6$. The measure $\eta_\eps$ 
is required to have a smooth density compactly supported on a ball of radius $\eps$. It can be easily seen that $\mu_6$
so defined has a bounded density with respect to the Lebesgue measure, given by $\psi(x)=\E_{\mu_5}[\eta_\eps(x+Z)]$.
By taking $\eps$ small enough we can ensure that $\mu_6 \in \p$, $\mu_6$ has the desired support properties, and the correct behaviour of the potential 
at 0 and $\i$, using Theorem \ref{convergence1}. $\log|1-z|$ is a 
bounded continuous function on the support of these measures, so \[\int \log|1-z| d(\mu_5 * \eta_\eps)(z) \to \int \log|1-z|d\mu_5(z). \] 
For the convergence of the logarithmic energies $\G(\mu_5 * \eta_\eps) 
\to \G(\mu_5)$, one argues as follows. Let $K$ be the thickening of the (compact)
support of $\mu_5$ by $1$. By the upper-semicontinuity 
of the logarithmic energy in $\M_1(K)$, we get that 
$\limsup_{\eps\to 0} \G(\mu_5*\eta_\eps)\leq \G(\mu_5)$. To get the reverse 
inequality, let $W$ be distributed according to 
$\eta$. Then, for any $z\in K$ and $\eps<1$,
$$\E\log|z+\eps W|\geq \log |z|+\E\log|1+\eps W/|z||\geq \log |z|- C_\epsilon\,,$$
where $C_\epsilon>0$ is independent 
of $z\in K$ and $\lim_{\eps\to0} C_\epsilon=0$. Noting that
$$\G(\mu_5*\eta_\eps)=\iint \E \log|x-y+\eps W| \mu_5(dx)\mu_5(dy)\,,$$
it follows that 
$\liminf_{\eps\to 0} \G(\mu_5*\eta_\eps)\geq \G(\mu_5)$, as claimed.
(This argument is similar to that used in the
proof of  \cite[Lemma 2.2]{BAZ}.)

  This completes the proof of Proposition 
  \ref{prop-era}. \qed

  \section{Conditioning on all zeros being  real}
  \label{sec-6}
 One notes from the expression for the density 
 \eqref{dist2} in case $k=0$ that $P(L_n\in \M_1(\R_-))>0$. One also
 notes that $\{\mu\in \p: \supp(\mu)=\R_-\}=\M_1(\R_-)$. Thus, one can 
rerun the proof of the lower bound in Theorem  
\ref{ldp} replacing throughout $\p$ by $\M_1(\R_-)$ as a particular case of
the proof in \cite{BAG}. One obtains that
$$ \lim_{n\to\infty} \frac1{n^2} \log \P_n(L_n\in \M_1(\R_-))=-I_R\,,$$
and one immediately deduces Theorem \ref{theo-Lihom} by noting that 
the minimizer $\mu_R$ is unique due to the strict convexity of $I$
(applied on $\M_1(\R_-)$).

To see Theorem \ref{theo-baik}, we can make the transformation $x\mapsto -x$
to see that we are interested in solving the variational problem
\begin{equation}
  \label{eq-baik1}
  \inf_{\mu\in M_1([1,\infty))}\{\int_0^\infty \log (x+1) d\mu(x)-
  \gamma \int_0^\infty \int_0^\infty \log|x-y|d\mu(x)d\mu(y)\,,
\end{equation}
  with $\gamma=1/2$.

  A standard application of calculus of variation methods (as e.g. in
  \cite[Lemma 2.6.2]{AGZ}) shows that the minimizer $\bar \mu$ of 
  \eqref{eq-baik1} is characterized as the unique solution, for some 
  constant $C$, of
  \begin{equation}
    \label{eq-baik3} 
    2\gamma \LL_{\bar \mu}(x)\,\left\{
    \begin{array}{ll}
      =\log (x+1)+C,& \bar \mu -a.e.\, \\
      >\log |x+1|+C,& x\in \R_+\setminus \supp(\bar \mu).
    \end{array}\right.
  \end{equation}
    One can proceed by first guessing the form of the 
    minimizer and then verifying that it satisfies indeed 
    \eqref{eq-baik3}. For $\gamma>1/2$, this is
    can be achieved solving, in a 
    compact interval, the associated Riemann-Hilbert problem, and then
    taking the  limit 
    $\gamma\to 1/2$. We do not detail these computations, instead 
    presenting the \textit{ansatz} that the minimizer in \eqref{eq-baik1}
    has density with respect to Lebesgue measure on $[0,\infty)$ of the form 
      \begin{equation}
        \label{eq-baik4}
        \psi(x)=\frac{1}{\pi (x+1) \sqrt{x}}\,.
      \end{equation}
      We need to verify that $\psi(x)dx$ satisfies \eqref{eq-baik3}. 
      Making the change of variables $w=\sqrt{x}$, we have
      $$\LL_{\bar \mu}(x)=\frac{2}{\pi}
      \int_0^\infty \frac{\log|x-w^2|}{w^2+1} dw\,.$$
      Choosing the contour of integration ${\cal C}:=\{r\}_{r=-R}^R
      \cup \{Re^{i\theta}\}_{\theta=0}^{\pi}$ for $R$ large,
      and noting the pole at $i$, one obtains from a residue computation that
      $\LL_{\bar \mu}(x)= \log(x+1)$ for $x\in \R_+$, i.e. that
      \eqref{eq-baik3}
      holds with density $\psi$. This completes the proof of Theorem 
      \ref{theo-baik}.

\section{Appendix: De Angelis' Theorem}
Here we state De Angelis' Theorem (\cite{DA}, also quoted as Theorem A in \cite{BES}) on the positivity of coefficients of polynomials, which has been used in the proof of our main theorem. 

\begin{theorem}[De Angelis]
\label{dat}
 Let $f(z)=a_0+a_1z + \cdots + a_d z^d$ be a real polynomial, with $a_0>0,a_d>0$. Then the following conditions are equivalent:
 \begin{itemize}
  \item[] (i) There exists a positive integer $m$ such that all coefficients of $f^m$ are strictly positive.
  \item[] (ii) There exists a positive integer $m_0$ such that for all $m \ge m_0$, all coefficients of $f^m$ are strictly positive.
  \item[] (iii) The inequalities \[ f(z) < f(|z|) \text{ for all } z \notin [0,\i) \] and \[ a_1>0, a_{d-1}>0 \] hold.
 \end{itemize}
\end{theorem}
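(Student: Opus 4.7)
\noindent\textbf{Proof proposal for Theorem \ref{dat}.}
The plan is to prove the cyclic chain (ii) $\Rightarrow$ (i) $\Rightarrow$ (iii) $\Rightarrow$ (ii). The first implication is trivial. For (i) $\Rightarrow$ (iii), fix $m$ with $f^m(z)=\sum_{k=0}^{md} c_k z^k$ and all $c_k>0$. For any $z\notin[0,\infty)$, at least one pair of the nonzero powers $z^j$, $z^\ell$ has distinct arguments, so the strict triangle inequality gives $|f^m(z)|<\sum c_k|z|^k=f^m(|z|)$, whence $|f(z)|<f(|z|)$. Extracting the coefficient of $z^1$ in $f^m$ yields $c_1=m a_0^{m-1}a_1>0$, which (since $a_0>0$) forces $a_1>0$; the analogous computation for the coefficient of $z^{md-1}$ forces $a_{d-1}>0$.

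The substantial content is (iii) $\Rightarrow$ (ii), which I would handle through a saddle-point/Laplace analysis of the Cauchy integral
\[
c_k(m):=[z^k]f^m(z)=\frac{1}{2\pi}\,r^{-k}\int_{-\pi}^{\pi}f(re^{i\theta})^m\,e^{-ik\theta}\,d\theta,\qquad r>0.
\]
For each ratio $\alpha=k/m\in(0,d)$, choose $r=r(\alpha)>0$ as the unique positive solution of $r f'(r)/f(r)=\alpha$ (existence and uniqueness follow because $\phi(r):=rf'(r)/f(r)$ is strictly increasing on $(0,\infty)$ under condition (iii), with $\phi(0^+)=\text{(order of $z=0$ root)}=0$ when $a_0>0$ and $\phi(\infty)=d$). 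By (iii), $\theta\mapsto |f(re^{i\theta})|$ attains its strict maximum at $\theta=0$, and a direct computation shows that the second derivative there is a strictly negative multiple of $m$. Standard Laplace asymptotics (using that $f(r)>0$ so $f(re^{i\theta})^m$ has a genuine Gaussian profile around $\theta=0$) give
\[
c_k(m)=\frac{f(r)^m}{r^k}\,\frac{1+o(1)}{\sqrt{2\pi m\,\sigma(r)^2}}>0
\]
for all $m$ large enough, uniformly for $\alpha$ in any compact subinterval of $(0,d)$.

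It remains to handle the endpoint regimes where $\alpha\to 0$ or $\alpha\to d$. For $k$ bounded as $m\to\infty$, expand directly:
\[
c_k(m)=\binom{m}{k}a_0^{m-k}a_1^k+O\bigl(m^{k-1}a_0^{m-k}\bigr),
\]
which is strictly positive once $m\ge m_1(k)$ because $a_0,a_1>0$. Applying the same expansion to the reciprocal polynomial $\tilde f(z)=z^d f(1/z)$ (whose leading and second-to-leading coefficients are $a_0$ and $a_1$ after the reversal that sends $a_d,a_{d-1}$ to the front) handles bounded $md-k$ via $a_d,a_{d-1}>0$. The three regimes can be glued: for a fixed small $\delta>0$, Laplace asymptotics give positivity uniformly for $k/m\in[\delta,d-\delta]$ and all $m\ge m_2(\delta)$; the endpoint expansions give positivity for $k\le K(\delta)$ or $md-k\le K(\delta)$ and $m\ge m_3(\delta,K)$; choosing $\delta$ so that $K(\delta)>\delta\cdot m_2(\delta)$ covers every $k\in\{0,1,\ldots,md\}$.

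\emph{Main obstacle.} The delicate point is the uniformity of the Laplace estimate as $\alpha$ approaches the endpoints $0$ or $d$, where the saddle radius $r(\alpha)$ degenerates to $0$ or $\infty$ and the Gaussian width $\sigma(r)$ deteriorates. Overcoming this requires a quantitative lower bound on $-\partial_\theta^2 \log|f(re^{i\theta})|\big|_{\theta=0}$ in terms of $r$, together with a uniform-in-$r$ bound on the contribution from $|\theta|\ge\theta_0(r)$ using the strict inequality in (iii). The endpoint expansions above serve precisely to bypass this degeneration by taking over before Laplace loses control, and condition $a_1,a_{d-1}>0$ is exactly what is needed to make those expansions give strictly positive leading terms.
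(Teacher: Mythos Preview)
The paper does not prove this theorem at all: it is quoted in the Appendix as an external result of De Angelis \cite{DA} (also Theorem~A in \cite{BES}), with no proof or sketch given. So there is nothing in the paper to compare your argument against.

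On the merits of your sketch: the easy directions (ii)$\Rightarrow$(i)$\Rightarrow$(iii) are fine, and the saddle-point/Cauchy-integral attack on (iii)$\Rightarrow$(ii) is indeed the natural route and is essentially De Angelis' own strategy. One point deserves more care. You assert that $\phi(r)=rf'(r)/f(r)$ is strictly increasing ``under condition (iii)'', and separately that $\partial_\theta^2\log|f(re^{i\theta})|\big|_{\theta=0}<0$. These two claims are in fact \emph{equivalent}: a short computation gives
\[
\partial_\theta^2\log|f(re^{i\theta})|\big|_{\theta=0}=-r\,\phi'(r).
\]
Under (iii) the coefficients of $f$ need not all be nonnegative, so the usual log-convexity argument for $\log f(e^t)$ does not apply directly. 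What (iii) does give you is that $\theta=0$ is a strict global maximum of $\theta\mapsto\log|f(re^{i\theta})|$, hence the second derivative is $\le 0$, i.e.\ $\phi'(r)\ge 0$; but strict negativity (needed for a nondegenerate Gaussian saddle) could fail at isolated $r$, forcing a higher-order Laplace expansion there. This is not fatal---existence of a saddle radius follows from the intermediate value theorem even without uniqueness, and degenerate saddles can be handled---but it is a genuine technical point your outline glosses over. Your identification of the endpoint regimes (small $k$ and small $md-k$) and the role of $a_1,a_{d-1}>0$ there is exactly right.
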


\end{document}